\numberwithin{equation}{section}
\theoremstyle{plain}
\newtheorem{theorem}{Theorem}[section]
\newtheorem{corollary}[theorem]{Corollary}
\newtheorem{lemma}[theorem]{Lemma}
\newtheorem{example}[theorem]{Example}
\newtheorem{proposition}[theorem]{Proposition}
\theoremstyle{definition}
\newtheorem{definition}[theorem]{Definition}
\theoremstyle{remark}
\newtheorem{remark}[theorem]{Remark}
\newcommand\bdf{\begin{definition}}
\newcommand\bpr{\begin{proposition}}
\newcommand\brk{\begin{remark}}
\newcommand\blm{\begin{lemma}}
\newcommand\bexe{\begin{exercise}}
\newcommand\bexa{\begin{example}}
\newcommand\beqn{\begin{eqnarray*}}
\newcommand\edf{\end{definition}}
\newcommand\epr{\end{proposition}}
\newcommand\erk{\end{remark}}
\newcommand\elm{\end{lemma}}
\newcommand\eexe{\end{exercise}}
\newcommand\eexa{\end{example}}
\newcommand\eeqn{\end{eqnarray*}}
\newcommand{\ds}{\mathsf{d}}
\newcommand{\Opt}{\rm OptGeo}
\newcommand{\geo}{\rm Geo}
\newcommand{\lmt}[2]{\mathop{\lim}_{{#1} \rightarrow {#2}} }
\newcommand{\lip}[1]{{\mathrm{lip}}({#1})}
\newcommand{\lmts}[2]{\mathop{\overline{\lim}}_{{#1} \rightarrow {#2}} }
\newcommand{\lmti}[2]{\mathop{\underline{\lim}}_{{#1} \rightarrow {#2}} }
\newcommand{\Ric}{{\rm{Ricci}}}
\newcommand{\mm}{\mathfrak m}
\newcommand{\ms}{(X,\ds,\mm)}
\newcommand{\CD}{{\rm CD}}
\newcommand{\cdkn}{{\rm CD}(K, N)}
\newcommand{\rcdkn}{{\rm RCD}(K, N)}
\newcommand{\rcd}{{\rm RCD}(K, \infty)}
\newcommand{\ent}[1]{{\rm Ent}_{#1}}
\newcommand{\De}{\mathrm{D}}
\newcommand{\N}{\mathbb{N}}
\newcommand{\R}{\mathbb{R}}
\newcommand{\pr}{\mathcal{P}}
\newcommand{\Lip}{\mathop{\rm Lip}\nolimits}
\renewcommand{\d }{{\mathrm d}}
\newcommand{\dt}{{\d t}}
\newcommand{\restr}[1]{\lower3pt\hbox{$|_{#1}$}}
\newcommand{\nchi}{{\raise.3ex\hbox{$\chi$}}}
\title{\Large{\bf On the geometry of Wasserstein barycenter I}
}
\begin{document}
\author{Bang-Xian Han\thanks{School of   Mathematics,  Shandong University,  Jinan, China.  Email: hanbx@sdu.edu.cn. }
\and Deng-Yu Liu
\thanks{School of Mathematical Sciences, University of Science and Technology of China, Hefei, China. Email:  yzldy@mail.ustc.edu.cn}
\and Zhuo-Nan Zhu
\thanks{School of Mathematical Sciences, University of Science and Technology of China, Hefei, China.  Email: zhuonanzhu@mail.ustc.edu.cn}
}

\date{\today} 
\maketitle

\begin{abstract}
We study the Wasserstein barycenter problem   in the setting of   non-smooth  extended metric measure spaces.  We introduce a couple of new concepts and obtain the existence,  uniqueness, absolute continuity of the Wasserstein barycenter,  and prove  Jensen's  inequality in an abstract framework.
 This generalizes several results on Euclidean spaces, Riemannian manifolds and Alexandrov spaces,  to metric measure spaces satisfying Riemannian Curvature-Dimension condition,  as well as some extended metric measure spaces including abstract Wiener spaces and configuration spaces over Riemannian manifolds.   As a by product,  we obtain the well-posedness of the multi-marginal optimal transport problem in a very general setting.

 We also introduce a  new curvature-dimension condition, called   Barycenter-Curvature-Dimension condition. We  prove its stability in measured-Gromov--Hausdorff convergence and prove the existence of the Wasserstein barycenter  under this new condition. In addition, we get some geometric inequalities including a multi-marginal  Brunn--Minkowski inequality  and a  functional Blaschke--Santal\'o type  inequality.
\end{abstract}

\textbf{Keywords}: Wasserstein barycenter,  metric measure space,  curvature-dimension condition,  Ricci curvature, optimal transport

\textbf{MSC 2020}: 53C23, 51F99, 49Q22\\
\tableofcontents


\section{Introduction}
This paper has two goals.  The first  one is to extend results about Wasserstein barycenter problem from the setting of smooth Riemannian manifolds to the setting of non-smooth extended metric measure spaces.  The second one is  to give a notion for an extended  metric measure space to have Ricci curvature bounded from below, using Jensen's inequality for Wasserstein barycenters.  We refer to \cite{BBI-A}, \cite{SantambrogioBook} and \cite{V-O} for basic material on metric geometry,  optimal transport  and synthetic theory of  curvature bounds.    In the introduction, we motivate the questions that we address and we summarize the main results.

\subsection*{Wasserstein barycenter}

Let $\Omega$  be a Borel probability measure  on a complete metric space $(Y,\ds_Y)$, a barycenter of $\Omega$ is defined as a minimizer of the map $$x\mapsto\int_{Y} \ds_Y^2(x,y)\,\d \Omega(y)\in [0,  +\infty]$$ and the variance of $\Omega$ is defined as
\[
{\rm Var}(\Omega):=\inf_{x\in X} \int_{Y} \ds_Y^2(x,y)\,\d \Omega(y).
\]  In general,  we do not have  existence or uniqueness  of the minimizer.   In some cases,   for example when $(Y, \ds_Y)$ is  NPC (metric spaces of non-positive curvature),   we have existence  and uniqueness of the barycenter for sure.

A bridge, called Jensen's inequality, between convexity and probability measures was firstly established by Jensen in his seminal paper \cite{jensen1906fonctions} and had been highly concerned and extensively explored after that. {Jensen's inequality in an abstract metric space can be formulated in the following way.}
Let $\Omega$ be a probability measure with finite variance on $Y$, and $K \in \R$.  We say that a function $F: Y \to \R \cup \{+\infty\}$ is weakly (strongly)  \emph{$K$-barycentrically convex} (see Definition \ref{def:baryconv}),  if there is a (for any) barycenter $\bar{x}$ of $\Omega$, it holds the inequality
\begin{equation}\label{eq:introbc}
F(\bar{x})\leq \int_Y F(x)\,\d \Omega(x)- \frac K2 {\rm Var}({\Omega}).\tag{JI}
\end{equation}
This inequality is also known as Jensen's inequality. A fundamental problem concerning this inequality,   is to find  geodesically convex functions over a metric space such that Jensen's inequality \eqref{eq:introbc} holds. This problem has been  widely studied in the context of Riemannian manifolds \cite{emery2006barycentre, Bijan2011},  $\rm CAT$ spaces \cite{sturm2003probability, kendall1990probability, kuwae1997jensen, yokota2016convex}, Alexandrov spaces \cite{paris2020jensen} and convex metric spaces \cite{kuwae2014jensen}.

\medskip

We say that $(X,\ds)$ is an extended  metric space if $\ds : X \times X \to  [0, +\infty]$ is a symmetric function satisfying the triangle inequality, with $\ds(x, y) = 0$ if and only if $x = y$.
Given a Hausdorff topology $\tau$ on $X$ and the Borel $\sigma$-algebra $\mathcal B(\tau)$. {Denote by $\mathcal P(X)$  the set of Radon probability measures on $X$, and by $\mathcal P_2(X,\ds)$  the set of  \emph{probability measures  with finite variance} (or second order moment)}, i.e. $\mu\in  \mathcal P_2(X,\ds)$ if and only if $\mu\in \mathcal P(X)$ and $\int \ds^2(x, x_0)\,\d \mu(x)<\infty$ for some 
 $x_0\in X$. 

Consider the Wasserstein space $\mathcal W_2:=(\pr(X),W_2)$ equipped with the   so-called  $L^{2}$-Kantorovich-Wasserstein or $L^{2}$-optimal transport distance:
 \[
 W_2^2(\mu, \nu):=\inf_\Pi \int \ds^2(x, y)\,\d \Pi(x, y),
 \]
  where the infimum is taken among all transport plans $\Pi\in\pr(X\times X)$ with marginals $\mu$ and $ \nu$.
It is  known that $W_2$ is  an extended metric on $\pr(X)$ (see \cite[Proposition 5.3]{ambrosio2016optimal}), and $W_2$ is  a metric on $\pr_2(X, \ds)$ if $(X, \d)$ is a metric space (see \cite[Theorem 2.2]{AG-U}). Therefore it makes sense to talk about barycenters in the Wasserstein space.
This problem, called Wasserstein barycenter problem,   draws particular interests in the optimal transport theory and its related fields, as it gives a natural but non-linear way to interpolate between a distribution of measures.

\begin{definition}[Wasserstein barycenter]
	Let $(X,\ds)$ be an extended metric space and let $\Omega\in \pr_2(\pr(X), W_2)$ be a Radon probability measure on the Wasserstein space  $(\pr(X), W_2)$ with finite variance. We call $\bar{\nu}\in \pr(X)$ a Wasserstein barycenter of $\Omega$ if
	$$\int_{\pr(X)} W_2^2(\mu,\bar{\nu})\,\d\Omega(\mu)=\min_{\nu\in \pr(X)}\int_{\pr(X)} W_2^2(\mu, \nu)\,\d \Omega(\mu).$$
\end{definition}

It is worth to mention that, if   $(X,  \ds)$ is  a geodesic space,  any Wasserstein barycenter of  $\Omega=\frac 12 \delta_{\mu_0}+\frac 12\delta_{\mu_1}$ is exactly a mid-point of $\mu_0, \mu_1$ in the Wasserstein space.

\medskip

The study of the Wasserstein barycenter problem has got a lot of attention from experts in various fields in the last decade.   In mathematical  economic, Carlier--Ekeland \cite{carlier2010matching}  studied team matching problems by considering the interpolation between multiple probability measures, which  can be seen as a generalization of Wasserstein barycenter with square of Wasserstein distance replaced by the general cost. From the perspective of statistic, Wasserstein barycenter can be understood as the mean, which is a central topic when dealing with a large number of data, of a data sample composed of probability measures and thus shows its application value in  data science \cite{altschuler2022wasserstein},  image processing \cite{rabin2012wasserstein} and statistics \cite{bigot2013consistent}.  

In  metric measure geometry,   Agueh--Carlier \cite{AguehCarlier} in the Euclidean setting,   Kim--Pass \cite{KimPassAIM}  and Ma \cite{ma2023} in the Riemannian setting,  Jiang \cite{JiangWasserstein} in the Alexandrov spaces,   established the existence and uniqueness of  Wasserstein barycenter, and the absolutely continuity of the Wasserstein barycenter with respect to the canonical reference measures in these cases.  \emph{As a consequence}, they prove the following Wasserstein Jensen's inequality, which characterizes the convexity of certain functionals via Wasserstein barycenter  (cf. Definition \ref{def:baryconv}): 

Let  $\mathcal F: \pr_2(X, \ds) \to \R \cup \{+\infty\}$  be a $K$-displacement convex functional in the sense of McCann \cite{mccann1997convexity},     $\Omega$ be a probability measure on $\mathcal{P}_2(X, \ds)$  and  $\bar{\mu}$ be its unique barycenter.   It holds 
 \begin{equation}\label{jensen}
\mathcal F(\bar{\mu})\leq \int_{\mathcal{P}_2(X, \ds)}\mathcal F(\mu)\,\d\Omega(\mu)-\frac{K}{2}\int_{\mathcal{P}_2(X, \ds)}W_2^2(\bar{\mu},\mu)\,\d\Omega(\mu).\tag{WJI}
\end{equation}
 
\medskip

The first  goal of this paper is to extend results in \cite{AguehCarlier, KimPassAIM, ma2023, JiangWasserstein},   including the key properties \emph{existence, uniqueness, and regularity of Wasserstein barycenter},  to the setting of  extended metric measure spaces.  
The main difficulties to achieve this aim,  comparing with  the previous results,   lies in the fact that 
\begin{itemize}
\item  all the known existence  results depend on local compactness,  which is not available for non-compact spaces such as infinite-dimensional RCD spaces or abstract Wiener spaces;
\item  in the Euclidean setting,   regularity results relies heavily on    the special geometric and algebraic  structure of the space;
\item in the case of Riemannian manifolds and Alexandrov spaces,    sectional curvature bounds play important roles.
\end{itemize}   
Therefore the known methods  are difficult to be realized, in the setting of general extended metric measure space. To overcome these difficulties,   we will show that
 \begin{quote}
 \emph{regularity of the Wasserstein barycenter is a consequence,  other than a necessary condition of the Jensen's inequality}.
 \end{quote}
Furthermore,   as   one of our main innovations in this paper, we will show 
 \begin{quote}
 \emph{Jensen's inequality plays the role of  ``a priori estimate" in the theory of partial differential equations}.
 \end{quote} 
 In other words,   well-posedness of the Wasserstein barycenter problem can be deduced from Jensen's inequality.
 
Our idea is to  take advantage of gradient flow theory to study Jensen's inequality \eqref{eq:introbc}, and   Wasserstein barycenter problem.   Motivated by a work of Daneri and Savar{\'e} \cite{daneri2008eulerian}, we realize that a special formulation of gradient flows, called \emph{Evolution Variational Inequality}, implies  Jensen's inequality with respect to the barycenter. Consequently, we give a direct, simple proof of Wasserstein Jensen's inequality  by using the theory of gradient flows on  Wasserstein spaces and give a sufficient condition for the validity of Jensen's inequality. In particular, our strategy is synthetic,  dimension-free, independent of the properness of $(X, \d)$ and improve the result in \cite[Theorem 7.11]{KimPassAIM}.

More precisely, let $F$ be a lower semi-continuous function on a general extended  metric space $(X, \ds)$,  such that any point $x_0$  with finite distance to the domain of $F$ is the starting point of an ${\rm EVI}_K$-type gradient flow  $(x_t)$ of  $F$ satisfying 
\[
\lmti{t}{ 0} F(x_t) \geq F(x_0),~~ \lim_{t\to 0} \ds(x_t, y) \to \ds(x_0, y),~~ ~~\forall y\in  \overline{  {\rm D}(F)}
 \]
 and
\[
\frac {\d^+}{\dt} \frac 12 \ds^2(x_t, y)+\frac K2 \ds^2(x_t, y)\leq F(y)-F(x_t),~~~\forall t>0
\]
for all  $y \in  {\rm D}(F)$ satisfying $\ds(y, x_t) <\infty$ for some (and then all) $t \in (0, \infty) $, where $\frac {\d^+}{\dt} $ denotes the upper right derivative.
Then for a barycenter $\bar{x}$ of $\Omega$,  by considering the gradient flow from $\bar x$,  we  can establish  Jensen's type inequality \eqref{eq:introbc} for this function $F$.

\bexa The existence of  the  ${\rm EVI}_K$-type gradient flow of a $K$-convex function,   is    valid on Euclidean spaces, Hilbert spaces, Alexandrov spaces, ${\rm CAT}$ spaces.  Concerning the Wasserstien space $(\mathcal{P}(X),W_2)$, this property is satisfied  when the underling space $X$ is an Euclidean space, a smooth Riemannian manifold with uniform Ricci lower bound, an Alexandrov space with  curvature bounded from below,  a Wiener space (see \S \ref{5.1.2} or an $\rm RCD$ space (see \S \ref{SS:CDDef}). We refer the readers to \cite{ambrosio2005gradient, muratori2020gradient, ambrosio2016optimal} for more discussions on this topic.  
\eexa

Concerning Wasserstein barycenter problem, we prove the following  theorems without local compactness or any other fine local structure of the underling space $(X,\ds)$. Note that if $(X, \ds)$ is an extended metric space, the definition of  $\pr_2(X, \ds)$ may depends on the choice of base point, which is less meaningful. Thus we will deal with metric spaces and extended metric spaces separately.

First of all,   \emph{without assuming the existence of  Wasserstein barycenter},  we prove Wasserstein Jensen's inequality.  As a corollary, we get the absolute continuity of  Wasserstein barycenter(s).

\begin{theorem}[Wasserstein Jensen's inequality, Theorem \ref{WJI}]\label{th1:intro}
Let $K\in \mathbb{R}$, $(X,\ds,\mm)$ be an extended metric measure space,  $\Omega$ be a  probability measure over $\mathcal{P}(X)$ with finite variance. Assume one of the following conditions is satisfied:
\begin{itemize}
\item [{\bf A.}]  $(X, \ds, \mm)$ is an $\rcd$ metric measure space and $\Omega$ is concentrated on $\mathcal{P}_2(X,\ds)$;
\item [{\bf B.}]  $\mm$ is a probability measure,  any  ${\mu} \in \pr(X)$ which has finite distance from $\De( {\rm Ent}_{\mm})$ is the starting point of an ${\rm EVI}_K$ gradient flow of the relative entropy ${\rm Ent}_{\mm}$ in the Wasserstein space.
\end{itemize}

	Then for any barycenter $\bar{\mu}$ of $\Omega$,  the  Wasserstein Jensen's inequality follows:
	\begin{equation}\label{intro:WJI1}
	{\rm Ent}_{\mm}(\bar{\mu})\leq \int_{\mathcal{P}(X)}{\rm Ent}_{\mm}(\mu)\,\d\Omega(\mu)-\frac{K}{2}\int_{\mathcal{P}(X)}W_2^2(\bar{\mu},\mu)\,\d\Omega(\mu).
	\end{equation}
 Furthermore, if 
	$$
	\int_{\mathcal{P}(X)}{\rm Ent}_{\mm}(\mu)\,\d\Omega(\mu)<\infty,
	$$
	 then the entropy of the barycenter of $\Omega$ is finite. In particular, the barycenter  is absolutely continuous with respect to $\mm$.
\end{theorem}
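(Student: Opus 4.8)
The plan is to turn the heuristic of the introduction into a proof: the ${\rm EVI}_K$ gradient flow of the relative entropy furnishes the ``a priori estimate'', and Jensen's inequality is extracted by confronting it with the minimality of the barycenter. Both hypotheses supply the same tool. Under \textbf{B} the ${\rm EVI}_K$ flow of $\ent{\mm}$ is postulated outright, whereas under \textbf{A} a characterization of $\rcd$ spaces is precisely that $\ent{\mm}$ is $K$-geodesically convex and generates an ${\rm EVI}_K$ flow in $(\mathcal P_2(X,\ds),W_2)$ from every measure lying at finite $W_2$-distance from ${\rm D}(\ent{\mm})$; since $\Omega$ is concentrated on $\mathcal P_2(X,\ds)$, this applies. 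I may assume $\int \ent{\mm}(\mu)\,\d\Omega(\mu)<\infty$, for otherwise the right-hand side of \eqref{intro:WJI1} is $+\infty$ and there is nothing to prove. In that case $\Omega$-a.e.\ $\mu$ lies in ${\rm D}(\ent{\mm})$, and since finite variance forces $W_2(\bar\mu,\mu)<\infty$ for $\Omega$-a.e.\ $\mu$, the barycenter $\bar\mu$ has finite distance to ${\rm D}(\ent{\mm})$ and is therefore the starting point of an ${\rm EVI}_K$ flow $(\mu_t)_{t>0}$.

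The core computation is to integrate the ${\rm EVI}_K$ inequality first in time, then in $\mu$. Fix $\mu$ with $W_2(\bar\mu,\mu)<\infty$ and set $\phi(t):=\tfrac12 W_2^2(\mu_t,\mu)$, so that the flow inequality reads $\frac{\d^+}{\dt}\phi(t)+K\phi(t)\le \ent{\mm}(\mu)-\ent{\mm}(\mu_t)$. Because $t\mapsto\phi(t)$ is locally absolutely continuous along the flow and, by the starting condition $\lim_{t\to0}W_2(\mu_t,\mu)=W_2(\bar\mu,\mu)$, continuous up to $t=0$, multiplying by $e^{Kt}$ and integrating over $[0,T]$ gives
\begin{equation*}
e^{KT}\tfrac12 W_2^2(\mu_T,\mu)-\tfrac12 W_2^2(\bar\mu,\mu)\le \Big(\int_0^T e^{Kt}\,\dt\Big)\ent{\mm}(\mu)-\int_0^T e^{Kt}\ent{\mm}(\mu_t)\,\dt.
\end{equation*}
Now integrate against $\Omega$. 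The last term is independent of $\mu$, so it survives unchanged; the remaining terms are $\Omega$-integrable because $\Omega$ has finite variance (controlling $W_2^2(\bar\mu,\mu)$ and, via the triangle inequality, $W_2^2(\mu_T,\mu)$) and because $\int\ent{\mm}\,\d\Omega<\infty$.

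At this stage the barycenter enters through $\int \tfrac12 W_2^2(\mu_T,\mu)\,\d\Omega\ge \int \tfrac12 W_2^2(\bar\mu,\mu)\,\d\Omega=\tfrac12{\rm Var}(\Omega)$ for every $T$, which bounds the integrated left-hand side below by $(e^{KT}-1)\tfrac12{\rm Var}(\Omega)$. Dividing through by $\int_0^T e^{Kt}\,\dt=\frac{e^{KT}-1}{K}$ (positive for $K\ne0$, and equal to $T$ when $K=0$) leaves $\tfrac K2{\rm Var}(\Omega)$ on the left, and letting $T\to0^+$ I use that the weighted time average $\big(\int_0^T e^{Kt}\ent{\mm}(\mu_t)\,\dt\big)/\big(\int_0^T e^{Kt}\,\dt\big)$ has $\liminf$ at least $\liminf_{t\to0}\ent{\mm}(\mu_t)\ge\ent{\mm}(\bar\mu)$, by the lower semicontinuity built into the starting condition of the flow. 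Collecting terms yields exactly
\begin{equation*}
\ent{\mm}(\bar\mu)\le\int_{\mathcal P(X)}\ent{\mm}(\mu)\,\d\Omega(\mu)-\frac K2\int_{\mathcal P(X)}W_2^2(\bar\mu,\mu)\,\d\Omega(\mu),
\end{equation*}
which is \eqref{intro:WJI1}. The ``furthermore'' assertion is then immediate: when $\int\ent{\mm}\,\d\Omega<\infty$ the right-hand side is finite (the variance term being finite by assumption), so $\ent{\mm}(\bar\mu)<\infty$, and by the very definition of the relative entropy this forces $\bar\mu\ll\mm$.

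The main obstacle I anticipate is not the algebra but the analytic bookkeeping of the two integrations: one must confirm that $t\mapsto W_2^2(\mu_t,\mu)$ is genuinely locally absolutely continuous on $(0,T]$ and continuous at $0$ (so the pointwise ${\rm EVI}_K$ can be integrated against its upper right derivative down to the endpoint $\phi(0)=\tfrac12 W_2^2(\bar\mu,\mu)$), that $t\mapsto\ent{\mm}(\mu_t)$ is measurable and bounded below along the flow, and that all $\Omega$-integrals converge. This last point hinges on a uniform lower bound for $\ent{\mm}$ on the relevant measures: under \textbf{B} it is automatic, since $\mm$ is a probability measure and hence $\ent{\mm}\ge0$, while under \textbf{A} it follows from the $K$-lower bound together with the finite second moment. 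Verifying that $\bar\mu$ is an admissible starting point in the $\rcd$ setting, and checking the borderline case $K=0$ separately, are the remaining points requiring care.
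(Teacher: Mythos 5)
Your proposal is correct and follows essentially the same route as the paper: start the ${\rm EVI}_K$ gradient flow of $\ent{\mm}$ at the barycenter (available under \textbf{A} by the characterization of $\rcd$ via EVI flows, under \textbf{B} by hypothesis, after reducing to the case $\int \ent{\mm}(\mu)\,\d\Omega(\mu)<\infty$), integrate the EVI inequality in time and against $\Omega$, use the barycenter's minimality to bound $\int W_2^2(\mu_t,\mu)\,\d\Omega(\mu)\geq {\rm Var}(\Omega)$, divide by $I_K(t)$, and let $t\to 0$ via lower semicontinuity of the entropy. The only cosmetic difference is that the paper quotes the Daneri--Savar\'e integral form of EVI, in which $\ent{\mm}(\mu_t)$ appears at the endpoint time, whereas you rederive a time-averaged version and handle the limit by an equivalent $\liminf$ argument for weighted averages.
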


{As a corollary},   we get the following  existence and uniqueness  theorem.

\begin{theorem}[Existence and uniqueness of barycenter, Theorem \ref{th:existenceRCD} and Theorem \ref{uniqueness in infinite space}]
	Assume  $(X,\ds,\mm)$   fulfil the hypothesis in Theorem \ref{th1:intro}.   Let $\Omega$ be a  probability measure over $\mathcal{P}(X)$ with finite variance and $$
	\int_{\mathcal{P}(X)}{\rm Ent}_{\mm}(\mu)\,\d\Omega(\mu)<\infty.
	$$  Then  $\Omega$ has a barycenter.
	
	Furthermore,  if $\ms$ satisfies the weak Monge property,    then $\Omega$ has a unique barycenter.

\end{theorem}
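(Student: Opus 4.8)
The plan is to establish existence by the direct method, using the relative entropy $\ent{\mm}$ as the source of compactness and the Wasserstein Jensen inequality of Theorem \ref{th1:intro} as an \emph{a priori estimate}, and then to derive uniqueness from the weak Monge property by a strict–convexity argument. Write $V(\nu):=\int_{\pr(X)}W_2^2(\mu,\nu)\,\d\Omega(\mu)$ and $m:={\rm Var}(\Omega)=\inf_\nu V(\nu)$. First I would record two soft facts: measures of finite entropy are $W_2$-dense in the relevant finite-distance component and $V$ is $W_2$-continuous there (from $|W_2(\mu,\nu)-W_2(\mu,\nu')|\le W_2(\nu,\nu')$), so $m=\inf\{V(\nu):\ent{\mm}(\nu)<\infty\}$; and that $V$ is lower semicontinuous for narrow convergence (lower semicontinuity of $W_2$ together with Fatou). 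For each level $E$ the sublevel set $\{\ent{\mm}\le E\}$ is compact — narrowly when $\mm$ is a probability measure, and with the extra second-moment bound supplied by $V(\nu)\le C$ in the $\rcd$ case — so a constrained minimizer $\nu_E$ of $V$ on $\{\ent{\mm}\le E\}$ exists.

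The heart of the existence proof is to bound $\ent{\mm}(\nu_E)$ \emph{independently of} $E$. I would run the ${\rm EVI}_K$ gradient flow $(\nu_{E,t})$ of $\ent{\mm}$ issuing from $\nu_E$: the entropy is nonincreasing along it, so the flow stays feasible, while minimality of $\nu_E$ forces $\frac{\d^+}{\dt}\big|_{t=0}V(\nu_{E,t})\ge 0$. Integrating the pointwise EVI inequality against $\Omega$ exactly as in the proof of Theorem \ref{th1:intro} gives, at $t=0$,
\[
0\le \tfrac{\d^+}{\dt}\Big|_{0}\tfrac12 V(\nu_{E,t})\le \int_{\pr(X)} \ent{\mm}(\mu)\,\d\Omega(\mu)-\ent{\mm}(\nu_E)-\tfrac K2 V(\nu_E),
\]
whence $\ent{\mm}(\nu_E)\le \int \ent{\mm}(\mu)\,\d\Omega(\mu)+\tfrac{|K|}{2}V(\nu_E)\le E^\ast$ uniformly, since $V(\nu_E)\le V(\nu_0)$ for any fixed finite-entropy competitor $\nu_0$. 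Taking $E>E^\ast$ renders the constraint inactive, and letting $E\to\infty$ the minimizers $\nu_E$ all lie in the fixed compact set $\{\ent{\mm}\le E^\ast\}$ with $V(\nu_E)\downarrow m$; any subsequential limit $\bar\mu$ then satisfies $V(\bar\mu)\le\liminf_E V(\nu_E)=m$ by lower semicontinuity, so $\bar\mu$ is a barycenter. The main obstacle in this part is technical bookkeeping: justifying the interchange of $\frac{\d^+}{\dt}$ with $\int\,\d\Omega$ and the feasibility and semicontinuity of the entropy along the flow — exactly the estimates already underlying Theorem \ref{th1:intro}, which I reuse verbatim.

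For uniqueness, suppose $\bar\mu_0\ne\bar\mu_1$ are two barycenters; by Theorem \ref{th1:intro} both have finite entropy, hence are absolutely continuous with respect to $\mm$. The weak Monge property provides, for $\Omega$-a.e. $\mu$, genuine optimal \emph{maps} between the absolutely continuous barycenters and $\mu$, and the essentially non-branching structure yields a unique McCann geodesic $(\bar\mu_s)_{s\in[0,1]}$ from $\bar\mu_0$ to $\bar\mu_1$. The goal is the strict convexity estimate
\[
V(\bar\mu_s)\le (1-s)V(\bar\mu_0)+sV(\bar\mu_1)-c\,s(1-s)\,W_2^2(\bar\mu_0,\bar\mu_1),\qquad c>0,
\]
which, since both endpoints equal $m$ while $V\ge m$ everywhere, forces $W_2^2(\bar\mu_0,\bar\mu_1)=0$, i.e. $\bar\mu_0=\bar\mu_1$. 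Equivalently — and this is the route I expect to be cleanest — I would pass to the multimarginal optimal transport reformulation of the barycenter problem and show that the weak Monge property makes the optimal multimarginal plan unique and graph-induced, so that its barycentric projection (the barycenter) is unique.

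The delicate point of the uniqueness argument, and the place where the weak Monge property is indispensable, is producing the strict inequality uniformly in $\mu$. Building a coupling of $\mu$ and $\bar\mu_s$ by composing an optimal plan $\mu\to\bar\mu_0$ with the geodesic map $\bar\mu_0\to\bar\mu_1$ controls $W_2^2(\mu,\bar\mu_s)$ by the desired convex combination only if the transported coupling $\mu\to\bar\mu_1$ remains optimal, which fails for general (non absolutely continuous) $\mu$; the weak Monge property is precisely the hypothesis restoring this compatibility through the graph/uniqueness structure of the relevant optimal plans. Verifying it in the abstract extended metric measure setting — where the very existence of the optimal maps between absolutely continuous measures and arbitrary $\mu$ must itself be justified — is the crux of this theorem.
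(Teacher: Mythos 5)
Your \emph{existence} argument is essentially correct but organized genuinely differently from the paper's proof of Theorem \ref{th:existenceRCD}. The paper starts from $\epsilon$-approximate minimizers $\bar\mu^{\epsilon}$ of $V$ over \emph{all} of $\pr(X)$, runs the ${\rm EVI}_K$ flow from $\bar\mu^{\epsilon}$ up to the short time $t=\epsilon$ to obtain measures with uniformly bounded entropy (this is exactly why Theorem \ref{EVI JI} is stated with the $\epsilon$-term \eqref{NearlyJI}), extracts a narrow limit by tightness, and then — the step your scheme avoids — uses the contraction property $W_2(x_t,y_t)\le e^{-Kt}W_2(x_0,y_0)$ of ${\rm EVI}_K$ flows to transfer near-optimality of $\bar\mu^{\epsilon}$ to the limit point. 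You instead minimize on entropy sublevels and obtain the uniform entropy bound from entropy-monotonicity along the flow plus constrained minimality. Two caveats: your ``soft fact'' that $\inf\{V(\nu):{\rm Ent}_{\mm}(\nu)<\infty\}={\rm Var}(\Omega)$ is not soft in this generality — it is itself an EVI regularization statement (from the integral inequality \eqref{eq:integral version} one gets ${\rm Ent}_{\mm}(y_t)\le {\rm Ent}_{\mm}(z)+W_2^2(y_0,z)/(2I_K(t))$, so the flow at positive times has finite entropy and converges back to $y_0$ in $W_2$) and must be proved, since it is the only thing guaranteeing $V(\nu_E)\downarrow {\rm Var}(\Omega)$; and the derivative-at-$t=0$ computation must be replaced by the integral form \eqref{eq:integral version} (EVI holds only for a.e.\ $t>0$, and one cannot differentiate under $\int\,\d\Omega$ for free), which you acknowledge. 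Modulo these points both proofs run on the same fuel — EVI as a priori estimate, entropy compactness, lower semicontinuity — yours trading the contraction property for the density fact and constrained minimization.

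The \emph{uniqueness} part has a genuine gap. Your main route postulates strict convexity of $\nu\mapsto W_2^2(\mu,\nu)$ along the Wasserstein \emph{geodesic} $(\bar\mu_s)_{s\in[0,1]}$ joining the two barycenters, with a uniform modulus $c>0$. This is false: $W_2^2(\mu,\cdot)$ is not geodesically convex, not even in $\R^n$ with $c=0$ — this failure is precisely why generalized geodesics were introduced in the gradient-flow literature, and a generalized geodesic has base point $\mu$, so it cannot serve as a single interpolating curve for all $\mu$ simultaneously inside $\int\,\d\Omega(\mu)$. No weak Monge property repairs this, and the ``essentially non-branching structure'' you invoke is not among the hypotheses in case {\bf B}. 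Your fallback via the multi-marginal reformulation also does not apply, because here $\Omega$ is an arbitrary measure with finite variance, whereas the multi-marginal problem (Theorem \ref{MOT}) concerns finitely many marginals only.

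The correct — and much simpler — device, which is what the paper uses in Theorem \ref{uniqueness in infinite space} via Proposition \ref{uniqueness in finite space}, is \emph{linear} interpolation: set $\bar\mu:=\tfrac12(\bar\mu_0+\bar\mu_1)$. Mixing optimal plans gives $W_2^2(\mu,\bar\mu)\le \tfrac12 W_2^2(\mu,\bar\mu_0)+\tfrac12 W_2^2(\mu,\bar\mu_1)$ for every $\mu$, with no hypotheses at all; and when $\mu,\bar\mu_0,\bar\mu_1$ are all absolutely continuous — which holds $\Omega$-a.e.\ because $\int{\rm Ent}_{\mm}\,\d\Omega<\infty$, and for the barycenters by the Jensen inequality of Theorem \ref{WJI} — the weak Monge property upgrades this to a strict inequality whenever $\bar\mu_0\neq\bar\mu_1$: equality would make the half-half mixture of the two map-induced optimal plans an optimal plan from $\mu$ to $\bar\mu$, hence itself induced by a single map, forcing the two maps, and therefore $\bar\mu_0$ and $\bar\mu_1$, to coincide. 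Integrating the strict inequality against $\Omega$ contradicts minimality. This linear-interpolation argument is where the weak Monge property genuinely enters, and it needs no geodesic structure on $(X,\ds)$ whatsoever.
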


\medskip

Similarly,  using a 
finite dimensional variant of the ${\rm EVI}_{K}$ gradient flow,  called ${\rm EVI}_{K,N}$ gradient flow,  we prove a finite dimensional variant of 
Jensen's inequality, which seems new in the literature even on $\R^n$.

\begin{theorem}[Jensen's inequality with dimension parameter, Theorem \ref{JIEVI -K,N}, Corollary \ref{Jensen's equality finite} and Corollary \ref{entropy finite EVI -k,N}]\label{th2:intro}
Let $N\in [1,\infty)$. Let $(X,\ds,\mm)$ be an ${\rm RCD}(K,N)$  metric measure space and $\Omega$ be a Borel probability measure over $\mathcal{P}_2(X, \ds)$ with finite variance. Then the  dimensional Wasserstein Jensen's inequality follows:
\begin{equation}\label{intro: JIKN}
\int \frac{W_2(\bar{\mu},\mu)}{s_{K/N}(W_2(\bar{\mu},\mu))}U_N(\mu)\,\d\Omega(\mu)\leq  U_N(\bar{\mu}) \int \frac{W_2(\bar{\mu},\mu)}{t_{K/N}(W_2(\bar{\mu},\mu))}\,\d\Omega(\mu)
\end{equation}
for any  barycenter $\bar{\mu}$  of $\Omega$,   where  $s_{K/N}$ and  $t_{K/N}$ are distortion coefficients,  $U_N(\mu)=e^{-\frac{{\rm Ent}_{\mm}(\mu)}{N}}$.

In particular, 
if   the set $\left\{\mu:  {\rm Ent}_{\mm}(\mu)<\infty\right\}$ has positive $\Omega$-measure. Then  the barycenter of $\Omega$ is unique,  and this barycenter is absolutely continuous with respect to $\mm$.
\end{theorem}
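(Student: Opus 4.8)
The plan is to run the strategy advertised in the introduction, now with the dimensional ${\rm EVI}_{K,N}$ flow in place of ${\rm EVI}_K$. Fix a barycenter $\bar\mu$ of $\Omega$ and let $(\bar\mu_t)_{t>0}$ be the ${\rm EVI}_{K,N}$ gradient flow of ${\rm Ent}_{\mm}$ on the Wasserstein space issuing from $\bar\mu$, which is available on ${\rm RCD}(K,N)$ spaces as the dimensional refinement of the ${\rm EVI}_K$ flow recalled in the introduction; there the two initial conditions $\lim_{t\to0}{\rm Ent}_{\mm}(\bar\mu_t)={\rm Ent}_{\mm}(\bar\mu)$ and $\lim_{t\to0}W_2(\bar\mu_t,\mu)=W_2(\bar\mu,\mu)$ hold. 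Writing $W_t=W_2(\bar\mu_t,\mu)$, $c_{K/N}:=s_{K/N}'$ and $t_{K/N}=s_{K/N}/c_{K/N}$, the driving estimate is the ${\rm EVI}_{K,N}$ inequality $\frac{\d^+}{\dt}\tfrac12 W_t^2\le N\,\frac{W_t}{s_{K/N}(W_t)}\big(c_{K/N}(W_t)-U_N(\mu)/U_N(\bar\mu_t)\big)$ for each fixed $\mu$.

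First I would multiply this inequality by the ($\mu$-independent, positive) factor $U_N(\bar\mu_t)$, turning the right-hand side into $N\,\frac{W_t}{s_{K/N}(W_t)}\big(U_N(\bar\mu_t)c_{K/N}(W_t)-U_N(\mu)\big)$; this desingularises the entropy ratio and lets the argument cover the degenerate case $U_N(\bar\mu)=0$. Integrating against $\d\Omega(\mu)$, the left-hand side is $U_N(\bar\mu_t)$ times $\frac1t\int\big(\tfrac12 W_t^2-\tfrac12 W_2^2(\bar\mu,\mu)\big)\d\Omega$ in difference-quotient form, which is nonnegative because $\bar\mu$ minimises $\nu\mapsto\int W_2^2(\nu,\mu)\,\d\Omega$. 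The finite variance of $\Omega$ (and, when $K>0$, the Bonnet--Myers diameter bound) supplies an $\Omega$-integrable majorant for the right-hand side, so reverse Fatou permits passing to the limit $t\to0$ under the integral; the integrand then converges to $N\,\frac{W}{s_{K/N}(W)}\big(U_N(\bar\mu)c_{K/N}(W)-U_N(\mu)\big)$ with $W=W_2(\bar\mu,\mu)$, and nonnegativity of its integral is exactly \eqref{intro: JIKN} after using $c_{K/N}/s_{K/N}=1/t_{K/N}$.

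For the ``in particular'' part I would read off both consequences from \eqref{intro: JIKN}. For absolute continuity, the kernel $W/s_{K/N}(W)$ is everywhere strictly positive --- it is $\ge1$ for $K>0$ (the diameter bound keeps the argument of $s_{K/N}$ in $[0,\pi\sqrt{N/K}]$) and lies in $(0,1]$ for $K\le0$ --- so if $\{{\rm Ent}_{\mm}<\infty\}=\{U_N>0\}$ has positive $\Omega$-measure then the left-hand side of \eqref{intro: JIKN} is strictly positive; if $\bar\mu$ failed to be absolutely continuous, i.e. $U_N(\bar\mu)=0$, the right-hand side would vanish, a contradiction, so ${\rm Ent}_{\mm}(\bar\mu)<\infty$ and $\bar\mu\ll\mm$. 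For uniqueness, having shown every barycenter is absolutely continuous, I would invoke the fine transport structure of ${\rm RCD}(K,N)$: essential non-branching makes the optimal plans out of an absolutely continuous measure induced by unique maps, and combined with the strict displacement convexity of the entropy along the resulting (generalized) geodesics --- equivalently, with the equality analysis of \eqref{intro: JIKN} --- this rules out two distinct barycenters.

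The step I expect to be the main obstacle is the interchange of the one-sided derivative with the integral and the passage $t\to0$: one must produce the uniform $\Omega$-integrable majorant for the (desingularised) difference quotients and verify that the boundary terms of the ${\rm EVI}_{K,N}$ right-hand side converge, which is exactly where the two initial regularity properties of the flow and the finite-variance hypothesis are consumed. The uniqueness claim is the second delicate point, since it is not a formal consequence of \eqref{intro: JIKN} but genuinely relies on the non-branching and optimal-map structure specific to ${\rm RCD}(K,N)$.
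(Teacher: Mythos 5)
Your skeleton is exactly the paper's: start the ${\rm EVI}_{K,N}$ gradient flow of ${\rm Ent}_{\mm}$ at a barycenter (available by Theorem \ref{th:rcdknevi}), integrate the EVI inequality against $\Omega$, use minimality of the barycenter to discard the left-hand side, and pass to the limit $t\to0$ by Fatou/dominated convergence; the absolute-continuity conclusion is also read off the inequality exactly as in Corollary \ref{entropy finite EVI -k,N}. But two of your steps have genuine gaps. The first is the desingularization. Minimality of $\bar\mu$ controls \emph{difference quotients}: it gives $\frac1t\int\big(\tfrac12 W_2^2(\bar\mu_t,\mu)-\tfrac12 W_2^2(\bar\mu,\mu)\big)\,\d\Omega(\mu)\ge 0$, and says nothing about the sign of $\int\frac{\d^+}{\dt}\tfrac12 W_2^2(\bar\mu_t,\mu)\,\d\Omega(\mu)$ at a fixed $t>0$, which is what your multiplied pointwise inequality produces; your identification of the two is where the proof as written breaks. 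To exploit minimality one must first integrate the EVI in time, but then the entropy ratio appears as $U_N(\mu)/U_N(\bar\mu_s)$ with $s$ ranging over $[0,t]$, so the clean product ``$U_N(\bar\mu_t)$ times a difference quotient'' never arises. The step is repairable --- e.g.\ replace $1/U_N(\bar\mu_s)$ by $1/\sup_{r\le t}U_N(\bar\mu_r)$, or use that ${\rm Ent}_{\mm}$ is non-increasing along EVI flows so that $U_N(\bar\mu_s)\le U_N(\bar\mu_t)$ for $s\le t$ --- but you invoke neither fact; the paper avoids the issue entirely by keeping the ratio inside the time integral and applying Fatou together with upper semicontinuity of $U_N$ only at $t\to 0$ (proof of Theorem \ref{JIEVI -K,N}). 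Relatedly, for $K<0$ the ``$\Omega$-integrable majorant'' is not a consequence of finite variance alone: $W_t/t_{K/N}(W_t)$ grows linearly in $W_t=W_2(\bar\mu_t,\mu)$, so one needs the uniform-in-$t$ displacement estimates of Lemma \ref{Estimat EVI -K,N} (measures within distance $L$ of $\bar\mu$ stay at bounded distance from the flow; farther ones satisfy $W_t\le L\,W_0$). You correctly flag this as the main obstacle, but it is a real lemma, proved from the EVI itself, not a remark.

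The second gap is the uniqueness mechanism. It is not obtained from an ``equality analysis'' of \eqref{intro: JIKN}, nor from strict displacement convexity of the entropy along (generalized) geodesics. The paper's argument (Proposition \ref{uniqueness in finite space}, Theorem \ref{uniqueness in infinite space}) is strict convexity of $\nu\mapsto W_2^2(\mu,\nu)$ under \emph{linear} interpolation $\lambda\nu_1+(1-\lambda)\nu_2$, which holds precisely when $\mu$ admits unique optimal transport maps onto $\nu_1,\nu_2$, i.e.\ when $\mu$ itself is absolutely continuous. The hypothesis $\Omega(\{{\rm Ent}_{\mm}<\infty\})>0$ is what furnishes a positive-$\Omega$-measure set of such $\mu$; integrating the strict inequality over this set (plain convexity elsewhere) excludes a second barycenter. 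Note the direction: the maps must go \emph{out of the measures charged by} $\Omega$, not out of the barycenter, since $\nu\mapsto W_2^2(\mu,\nu)$ is affine rather than strictly convex when $\mu$ is a Dirac mass; so absolute continuity of the barycenter --- which is what your argument establishes and then leans on --- is not by itself the operative ingredient.
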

\subsection*{Barycenter-Curvature-Dimension condition}
The second goal of this paper is to provide a new notion, in terms  of Wasserstein barycenters, for an extended metric measure spaces to have synthetic  Ricci curvature bounded below and dimension bounded above.

There are various approaches to  extend notions of curvature  from smooth Riemannian manifolds to more general spaces.   A  good notion of a length space having ``sectional curvature bounded below" is Alexandrov space,  which is defined in terms of  Toponogov's comparison theorem concerning geodesic triangles (cf.  \cite{BBI-A} and \cite{AKPbook}).
More recently, a notion of  a metric measure space $\ms$ having ``Ricci curvature bounded below"  was introduced by Sturm\cite{S-O1, S-O2} and Lott--Villani \cite{Lott-Villani09} independently. This new synthetic  curvature-dimension condition is defined  in terms of a notion of geodesic convexity for functionals on the Wasserstein space $(\pr_2(X, \ds), W_2)$,  called displacement convexity, introduced by  McCann in his celebrated paper \cite{mccann1997convexity}.   This theory, called  Lott--Sturm--Villani theory today,    has been widely used in the study of  Ricci limit spaces, geometric and functional inequalities and many areas in applied mathematics.  Recently, Lott--Sturm--Villani's   curvature-dimension condition has been generalized to the setting of extended metric measure spaces, see \cite{ambrosio2016optimal, EH-Configure, DSHS,SS-configueII} and the references therein.

{From \cite{KimPassAIM},  we can see that the Wasserstein Jensen's inequality of certain functionals characterizes  the lower Ricci curvature bounds  of  Riemannian manifolds, in the same spirit of Cordero-Erausquin, McCann and Schmuckenschl{\"a}ger \cite{CMS01}}.  In Section \ref{main} we find more examples satisfying the Wasserstein Jensen's inequality, including non-compact spaces such as RCD spaces and extended metric spaces such as abstract Wiener spaces.
Based on these results, it is natural to provide a notion for general extended metric measure spaces to have   Ricci curvature bounded from below,  via the Wasserstein Jensen's inequality. This approach is surely compatible with Lott--Sturm--Villani theory and  has its own  highlights and interests.
 

\medskip

\begin{definition}[${\rm BCD}(K,\infty)$  condition, Definition \ref{def:bcd}]
	
	Let $K \in \R$.  We say that an \emph{extended  metric measure  space}  $(X,\ds,\mm)$ verifies ${\rm BCD}(K,\infty)$ condition,   if for any  probability measure $\Omega\in \pr_2(\pr(X), W_2)$,  concentrated on \emph{finitely many} measures, there exists a barycenter $\bar{\mu}$ of  $\Omega$ such that the following Jensen's  inequality holds:
	\begin{equation}
	{\rm Ent}_{\mm}(\bar{\mu})\leq\int_{\mathcal{P}(X)}{\rm Ent}_{\mm}(\mu)\,\d\Omega(\mu)-\frac{K}{2}{\rm Var}(\Omega).
	\end{equation}

\end{definition}

Not surprisingly,   the family of compact  metric measure spaces satisfying barycenter curvature-dimension condition is closed in measured Gromov--Hausdorff topology. 

\begin{theorem}[Stability in measured Gromov--Hausdorff topology, Theorem \ref{measured gromov hausdorff limit}]
	Let $\left\{(X_i, \ds_i, \nu_i)\right\}_{i=1}^{\infty}$ be a sequence of compact ${\rm BCD}(K,\infty)$ metric measure spaces with $K\in\mathbb{R}$. If  $(X_i, \ds_i, \nu_i)$ converges to $(X, \ds, \nu)$ in the measured Gromov--Hausdorff sense as $i\rightarrow \infty$, then $(X, \ds, \nu)$ is also a ${\rm BCD}(K,\infty)$ space. 
\end{theorem}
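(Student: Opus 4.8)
The plan is to transfer the barycentric Jensen inequality from the approximating spaces to the limit by realizing the measured Gromov--Hausdorff convergence inside a fixed ambient space and passing to the limit along a carefully chosen recovery sequence. First I would fix isometric embeddings of all $X_i$ and of $X$ into a common compact metric space $(Z,\ds_Z)$ so that $\nu_i\to\nu$ weakly in $\mathcal P(Z)$ and the images of $X_i$ converge to that of $X$ in Hausdorff distance. Since $W_2$ depends only on distances and masses, isometric embeddings preserve all Wasserstein distances, and on the compact space $Z$ weak convergence of probability measures coincides with $W_2$-convergence; hence every Wasserstein distance below is continuous under weak limits. Because the class of compact metric measure spaces is closed under mGH convergence, $(X,\ds,\nu)$ is compact, so $\mathcal P(X)$ is $W_2$-compact and the map $\eta\mapsto\int W_2^2(\mu,\eta)\,\d\Omega(\mu)$ is continuous: barycenters of any $\Omega$ exist a priori, and the task is to exhibit \emph{one} satisfying the inequality.

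Fix $\Omega=\sum_{j=1}^n\lambda_j\delta_{\mu^j}$. If some $\ent{\nu}(\mu^j)=+\infty$ the right-hand side is infinite and any barycenter works, so I may assume every $\mu^j=\rho_j\nu\ll\nu$ has finite entropy; note $\ent{\nu}\ge0$ as $\nu$ is a probability measure. The heart of the argument is the \emph{recovery sequence}: for each $j$ I build $\mu_i^j\in\mathcal P(X_i)$ with $\mu_i^j\to\mu^j$ weakly and $\ent{\nu_i}(\mu_i^j)\to\ent{\nu}(\mu^j)$. To this end I choose optimal couplings $\sigma_i\in\mathcal P(Z\times Z)$ of $(\nu_i,\nu)$, so $W_2(\nu_i,\nu)\to0$; disintegrating against the second marginal as $\sigma_i=\int\sigma_i^x\,\d\nu(x)$ (with $\sigma_i^x$ concentrated on $X_i$ for $\nu$-a.e. $x$), I set $\mu_i^j:=\int\rho_j(x)\,\sigma_i^x\,\d\nu(x)$. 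This is a probability measure on $X_i$, and because the Markov kernel $x\mapsto\sigma_i^x$ sends $\nu$ to $\nu_i$ and $\mu^j$ to $\mu_i^j$, the data-processing inequality for relative entropy yields $\ent{\nu_i}(\mu_i^j)\le\ent{\nu}(\mu^j)$. The plan coupling $\int\rho_j(x)\,(\sigma_i^x\otimes\delta_x)\,\d\nu(x)$ bounds $W_2^2(\mu_i^j,\mu^j)\le\|\rho_j\|_\infty\,W_2^2(\nu_i,\nu)\to0$, and a truncation of $\rho_j$ followed by a diagonal argument removes the boundedness assumption. Combined with the joint lower semicontinuity $\liminf_i\ent{\nu_i}(\alpha_i)\ge\ent{\nu}(\alpha)$ for $\alpha_i\to\alpha$ weakly --- immediate from the dual formula $\ent{m}(\alpha)=\sup_{\phi\in C_b(Z)}\{\int\phi\,\d\alpha-\log\int e^{\phi}\,\d m\}$ --- the two one-sided estimates meet and give the desired entropy convergence.

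With $\Omega_i:=\sum_j\lambda_j\delta_{\mu_i^j}$ I invoke ${\rm BCD}(K,\infty)$ on each $X_i$ to obtain a barycenter $\bar\mu_i$ with $\ent{\nu_i}(\bar\mu_i)\le\sum_j\lambda_j\ent{\nu_i}(\mu_i^j)-\frac K2\,{\rm Var}(\Omega_i)$. By $W_2$-compactness of $\mathcal P(Z)$ I extract $\bar\mu_i\to\bar\mu$ along a subsequence, with $\bar\mu$ supported in $X$. To see $\bar\mu$ is a barycenter of $\Omega$, I take any competitor $\eta\in\mathcal P(X)$, push it onto $X_i$ with vanishing $W_2$-cost to get $\eta_i$, and use minimality of $\bar\mu_i$ together with $W_2$-continuity on $Z$ to obtain $\sum_j\lambda_j W_2^2(\mu^j,\bar\mu)\le\sum_j\lambda_j W_2^2(\mu^j,\eta)$; in particular ${\rm Var}(\Omega_i)=\sum_j\lambda_j W_2^2(\mu_i^j,\bar\mu_i)\to\sum_j\lambda_j W_2^2(\mu^j,\bar\mu)={\rm Var}(\Omega)$. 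Passing to the limit in the Jensen inequality, the recovery property forces the right-hand side to converge to $\sum_j\lambda_j\ent{\nu}(\mu^j)-\frac K2\,{\rm Var}(\Omega)$ while lower semicontinuity controls the left-hand side, yielding $\ent{\nu}(\bar\mu)\le\sum_j\lambda_j\ent{\nu}(\mu^j)-\frac K2\,{\rm Var}(\Omega)$, which is exactly ${\rm BCD}(K,\infty)$ for $(X,\ds,\nu)$.

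I expect the main obstacle to be the construction and verification of the recovery sequence: guaranteeing \emph{simultaneously} the weak convergence $\mu_i^j\to\mu^j$ and the entropy convergence while the reference measures $\nu_i$ themselves vary. The Markov-kernel construction through the optimal coupling supplies the upper bound via data processing, the dual-formula lower semicontinuity supplies the matching lower bound, and the truncation passing from bounded to general finite-entropy densities is the only genuinely delicate bookkeeping. All remaining ingredients --- existence of barycenters on the compact limit, stability of the minimality property, and convergence of the variance --- are soft consequences of $W_2$-continuity on the compact ambient space $Z$.
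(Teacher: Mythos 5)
Your proposal is correct, and it follows the same overall skeleton as the paper's proof: build a recovery sequence $\mu_i^j\in\mathcal P(X_i)$ for the atoms of $\Omega$ with convergent entropies and $W_2$-distances, invoke ${\rm BCD}(K,\infty)$ on each $X_i$ to get barycenters $\bar\mu_i$ satisfying Jensen's inequality, show a subsequential limit of the $\bar\mu_i$ is a barycenter of $\Omega$ (via the two-sided variance comparison), and pass to the limit using joint lower semicontinuity of $(m,\mu)\mapsto {\rm Ent}_m(\mu)$. Where you genuinely diverge is in the implementation of the two technical pillars. The paper works intrinsically with two-way $\epsilon_i$-approximations $f_i:X_i\to X$, $f_i':X\to X_i$: it first reduces, via Lott--Villani's Lemma 3.24, to atoms with \emph{bounded continuous} densities, defines the recovery measures by pulling back the density ($\mu_{i,k}\propto(\rho_k\circ f_i)\,\mm_i$), and quotes Lott--Villani for the entropy/$W_2$ convergence of these pullbacks and for the contraction of entropy under the pushforward $(f_i)_\sharp$ (their property (iii), which is itself a data-processing inequality for a deterministic kernel). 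You instead realize the convergence extrinsically in a common compact ambient space $Z$, build the recovery measures by smoothing $\mu^j$ through the disintegration kernel of an optimal coupling of $(\nu_i,\nu)$, and obtain the two-sided entropy convergence as a sandwich between the data-processing inequality (upper bound) and joint lower semicontinuity (lower bound); general finite-entropy densities are handled by truncation plus a diagonal argument rather than by the continuous-density reduction. What your route buys is self-containedness — no appeal to Lott--Villani's approximation lemmas, and the kernel construction makes the entropy convergence almost automatic — at the price of the extra bookkeeping in the truncation/diagonal step and the measurable-selection argument needed to push competitors onto $X_i$; the paper's route is shorter on the page precisely because those convergence facts are imported wholesale. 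Both proofs ultimately rest on the same two ingredients (data processing and joint lower semicontinuity of the relative entropy), deployed at slightly different points.
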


Our theory has various applications, one of the most important and surprising ones is the resolvability  of the Wasserstein barycenter problem on BCD spaces.
Note that a BCD space does not necessarily have a Riemannian structure, the scope of our theorem is  far beyond Ricci-limit spaces and RCD spaces.

\begin{theorem}[Existence of Wasserstein barycenter, Theorem \ref{Existence of Wasserstein barycenter in barycenter space}]
	Let $(X,\d,\mm)$ be an extended metric measure space satisfying ${\rm BCD}(K,\infty)$ curvature-dimension condition, $\Omega$ be a probability measure on $\pr(X)$ satisfying $${\rm Var}(\Omega)<\infty~~~~\text{and}~~~\int_{\pr(X)}{\rm Ent}_\mm(\mu)\,\d\Omega(\mu)<\infty.$$ Then  $\Omega$ has a barycenter if one of the following conditions holds:
\begin{itemize}
\item [{\bf A.}]  $(X, \ds, \mm)$ satisfies the exponential volume growth condition and $\Omega$ is concentrated on $\mathcal{P}_2(X, \ds)$;
\item [{\bf B.}]  $\mm$ is a probability measure.
\end{itemize} 
\end{theorem}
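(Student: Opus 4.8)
The plan is to reduce the statement to the defining property of ${\rm BCD}(K,\infty)$, which grants a barycenter together with Jensen's inequality only when $\Omega$ is concentrated on \emph{finitely many} measures. First I would construct finitely supported approximations $\Omega_n\in\pr_2(\pr(X),W_2)$ with $W_2(\Omega_n,\Omega)\to 0$ in the Wasserstein metric over $(\pr(X),W_2)$, arranged so that the entropy budget does not increase, i.e. $\int_{\pr(X)}{\rm Ent}_\mm(\mu)\,\d\Omega_n(\mu)\le\int_{\pr(X)}{\rm Ent}_\mm(\mu)\,\d\Omega(\mu)$. Concretely I would discard a tail of small mass, partition a compact set carrying most of $\Omega$ into Borel pieces $A_i$ of $W_2$-diameter at most $\varepsilon_n$, and place the mass $\Omega(A_i)$ at a representative $\mu_i\in A_i$ whose entropy lies below the $\Omega$-average of ${\rm Ent}_\mm$ over $A_i$ (such a point must exist, as the average cannot be strictly exceeded everywhere). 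Smallness of the diameters gives $W_2(\Omega_n,\Omega)\le\varepsilon_n\to 0$, hence ${\rm Var}(\Omega_n)\to{\rm Var}(\Omega)$ (the elementary estimate of the last paragraph), while the choice of representatives keeps the entropy integral controlled. Applying ${\rm BCD}(K,\infty)$ to each $\Omega_n$ yields a barycenter $\bar\mu_n$ for which Jensen's inequality promotes the entropy budget into a \emph{uniform} bound ${\rm Ent}_\mm(\bar\mu_n)\le\int{\rm Ent}_\mm\,\d\Omega_n-\tfrac K2{\rm Var}(\Omega_n)\le C<\infty$.

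The crucial and most delicate step, where the hypotheses A and B enter, is to extract a subsequential limit of $\{\bar\mu_n\}$; since we lack local compactness, tightness must be produced by hand from the entropy bound. Under B, where $\mm$ is a probability measure, the uniform bound on ${\rm Ent}_\mm(\bar\mu_n)$ forces uniform integrability of the densities $\d\bar\mu_n/\d\mm$ (because $t\mapsto t\log t$ is superlinear), and together with the tightness of the Radon measure $\mm$ this yields tightness of $\{\bar\mu_n\}$. Under A I would first convert the variance bound into a uniform second-moment bound: from $\bar\mu_n$ being a barycenter and $W_2(\bar\mu_n,\delta_{x_0})\le W_2(\bar\mu_n,\mu)+W_2(\mu,\delta_{x_0})$, taking $L^2(\Omega_n)$-norms in $\mu$ gives $W_2(\bar\mu_n,\delta_{x_0})\le\sqrt{{\rm Var}(\Omega_n)}+\big(\int W_2^2(\mu,\delta_{x_0})\,\d\Omega_n(\mu)\big)^{1/2}$, whose right-hand side is bounded because $\Omega$ has finite variance and is concentrated on $\pr_2(X,\ds)$; hence $\int\ds^2(\cdot,x_0)\,\d\bar\mu_n=W_2^2(\bar\mu_n,\delta_{x_0})\le M$. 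The exponential volume growth then makes $Z:=\int e^{-c\,\ds^2(\cdot,x_0)}\dm$ finite for $c$ large, so $\tilde\mm:=Z^{-1}e^{-c\,\ds^2(\cdot,x_0)}\mm$ is a probability measure with ${\rm Ent}_{\tilde\mm}(\bar\mu_n)={\rm Ent}_\mm(\bar\mu_n)+c\!\int\ds^2(\cdot,x_0)\,\d\bar\mu_n+\log Z\le C+cM+\log Z<\infty$, reducing case A to the case-B argument applied to $\tilde\mm$. In either case Prokhorov's theorem provides a subsequence $\bar\mu_n\weakto\bar\mu\in\pr(X)$.

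Finally, to identify $\bar\mu$ as a barycenter of $\Omega$, set $V(\nu):=\int W_2^2(\mu,\nu)\,\d\Omega(\mu)$ and $V_n(\nu):=\int W_2^2(\mu,\nu)\,\d\Omega_n(\mu)$. An optimal coupling between $\Omega_n$ and $\Omega$ combined with $|W_2(\mu,\nu)-W_2(\mu',\nu)|\le W_2(\mu,\mu')$ gives the uniform estimate $|V_n(\nu)^{1/2}-V(\nu)^{1/2}|\le W_2(\Omega_n,\Omega)$ for every $\nu$. Passing to the infimum in $\nu$ yields ${\rm Var}(\Omega_n)\to{\rm Var}(\Omega)$, while the choice $\nu=\bar\mu_n$ gives $V(\bar\mu_n)^{1/2}=V_n(\bar\mu_n)^{1/2}+o(1)={\rm Var}(\Omega_n)^{1/2}+o(1)\to{\rm Var}(\Omega)^{1/2}$. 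Since $W_2$ is lower semicontinuous under narrow convergence, Fatou's lemma makes $V$ narrowly lower semicontinuous, so $V(\bar\mu)\le\liminf_n V(\bar\mu_n)={\rm Var}(\Omega)$; as $V(\bar\mu)\ge{\rm Var}(\Omega)$ trivially, $\bar\mu$ is a barycenter of $\Omega$. The only points needing care beyond this scheme are the existence of the entropy-nonincreasing finite approximation and the narrow lower semicontinuity of $W_2$ in the extended metric setting, both supplied by the quantization above and by the properties of $W_2$ recorded earlier.
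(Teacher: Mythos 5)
Your proposal is correct and follows essentially the same route as the paper's proof: a finite-support quantization of $\Omega$ with representatives chosen so the entropy integral does not increase, the ${\rm BCD}(K,\infty)$ Jensen inequality to turn the entropy budget into a uniform bound ${\rm Ent}_\mm(\bar\mu_n)\le C$, tightness via change of reference measure to a probability measure $\tilde\mm$ in case A (directly in case B), and Prokhorov's theorem. The only difference is cosmetic: you verify the final identification of the narrow limit as a barycenter by hand, through the uniform estimate $\big|V_n(\nu)^{1/2}-V(\nu)^{1/2}\big|\le W_2(\Omega_n,\Omega)$ and narrow lower semicontinuity of $W_2$, whereas the paper cites the stability theorem of Le Gouic--Loubes at that point.
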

\medskip

In the definition of ${\rm BCD}(K,\infty)$  condition, there is no restriction on the dimension,  so it is  a dimension-free condition.  This indicates that we need to specify a dimension parameter  in order to define a ``finite dimensional" BCD space. Following Theorem \ref{th2:intro}, it is natural to define ${\rm BCD}(K,N)$  condition for metric measure spaces as follows.  

\begin{definition}[${\rm BCD}(K,N)$  condition, Definition \ref{def:bcdkn}]
	
	Let $K \in \R, N>0$.  We say that a   metric measure  space  $(X,\ds,\mm)$ verifies ${\rm BCD}(K,N)$ condition,   if for any  probability measure $\Omega\in \pr_2(\pr(X), W_2)$ with  {finite} support, there exists a barycenter $\bar{\mu}$ of  $\Omega$ such that the following Jensen-type  inequality holds:
\begin{equation}
\int \frac{W_2(\bar{\mu},\mu)}{s_{K/N}(W_2(\bar{\mu},\mu))}U_N(\mu)\,\d\Omega(\mu)\leq  U_N(\bar{\mu}) \int \frac{W_2(\bar{\mu},\mu)}{t_{K/N}(W_2(\bar{\mu},\mu))}\,\d\Omega(\mu),
\end{equation}
where $U_N(\mu)=e^{-\frac{{\rm Ent}_{\mm}(\mu)}{N}}$.

\end{definition}
 
We will see in \S  \,\ref{sect:app} that BCD condition implies several geometric and functional inequalities. In addition,  by letting one (or more) marginal measure be Dirac mass,  we can propose  a variant of ``Measure Contraction Property"  \`a la Ohta--Sturm (cf. \cite{Ohta-MCP, S-O2}),  in the setting of  ${\rm BCD}(K,N)$.  We will study  the geometric  and analysis consequence of this property in a forthcoming paper \cite{HLZ-BC2}.
\subsection*{Multi-marginal optimal transport problem}
Given $\mu_1,...,\mu_n \in \pr (X)$ and a lower semi-continuous cost function $c: X^n \to \R$. The multi-marginal optimal transport problem of Monge type is to minimize 
	\begin{equation}\label{MP}
		\inf_{ T_2,\dots,T_n} \int_{X} c(x_1,T_2(x_1),\dots,T_n(x_1)) \, \d\mu_1(x_1),\tag{MP}
	\end{equation}
among  $(n-1)$-tuples of map $(T_2,\dots, T_n)$, such that for each $i=2,\dots,n$, the map $T_i:X\to X$ pushes the measure $\mu_1$ forward to $\mu_i$;  that means, for any Borel $A\subseteq X$, $\mu_1(T_{i}^{-1}(A))=\mu_i(A)$.
	
The multi-marginal optimal transport problem of Kantorovich type is to solve 
\begin{equation}\label{KP}
	\inf_{ \pi\in \Pi(\mu_1,\dots,\mu_n)} \int_{X^n} c(x_1,\dots,x_n) \, \d\pi (x_1,\dots,x_n),\tag{KP}
\end{equation}
where the infimum is taken over all probability measures $\pi$ on $X^n$ whose marginals are $\mu_1,\dots,\mu_n$.

When $n = 2$ and $c(x_1, x_2)=\ds(x_1, x_2)$ or $\ds^2(x_1, x_2)$, \eqref{MP} and \eqref{KP} correspond to the Monge and Kantorovich problems in classical optimal transport theory respectively. This problem has been studied extensively over the past three decades,
which   is one of the fundamental problems in the study of  spaces satisfying synthetic curvature-dimension condition \`a la Lott--Sturm--Villani, see  \cite{Gigli12a, RS-N, cavalletti2017optimal,GRS-O}. In particular,during the study of this problem, many important by-products have been produced, including the notion of essentially non-branching and qualitatively non-degenerate (cf.  \cite{Kell-Transport}.

 We will prove uniqueness and  resolvability of  Monge problem \eqref{MP}  for the multi-marginal optimal transport problem in the setting of metric measure space, with cost function
 \begin{equation}\label{intro:mm}
c(x_1,\dots, x_n):= \inf_{y\in X}\sum_{i=1}^n \frac{1}{2} \ds^2(x_i,y).
 \end{equation}
  Alternatively, we will show that $\eqref{MP}=\eqref{KP}$.
  It is worth mentioning that this problem was solved by Gangbo and Swiech \cite{gangbo1998optimal} in the Euclidean setting, and generalized by Kim and Pass \cite{kim2015multi} in the Riemannian setting and Jiang \cite{JiangWasserstein} in the Alexandrov spaces.  The  link between multi-marginal optimal transport with cost function \eqref{intro:mm} and the Wasserstein barycenter  problem associated with the measures $\mu_1,\mu_2,\dots, \mu_n$ was discovered  by Agueh and Carlier \cite{AguehCarlier} in the Euclidean case.

  \medskip

 Our first theorem is the following. 
 
 \begin{theorem}[Existence and uniqueness of multi-marginal optimal transport map, Theorem \ref{MOT}]
	Let $(X,\ds, \mm)$ be a metric measure space. Then the multi-marginal optimal transport problem of Monge type, associated with the cost function $c(x_1,\dots,x_n)$,   has a unique solution,  if one of the following conditions holds:
	\begin{itemize}
	\item [{\bf A.}]  $(X,\ds,\mm)$ is an ${\rm RCD}(K,N)$ space, and $\mu_1\ll \mm$.
	
	\item [{\bf B.}]  $(X,\ds,\mm)$ is an ${\rm RCD}(K,\infty)$ space, and $\mu_i\ll \mm,i=1,\dots,n$.
	\end{itemize}
\end{theorem}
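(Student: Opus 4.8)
The plan is to exploit the link, going back to Agueh--Carlier \cite{AguehCarlier}, between the multi-marginal problem for the cost \eqref{intro:mm} and the Wasserstein barycenter problem for the empirical measure $\Omega:=\frac1n\sum_{i=1}^n\delta_{\mu_i}$, whose barycenter is exactly the minimizer of $\nu\mapsto\sum_i\tfrac12 W_2^2(\mu_i,\nu)$. Concretely, I would first establish the identity
\begin{equation}
\min_{\pi\in\Pi(\mu_1,\dots,\mu_n)}\int_{X^n} c\,\d\pi \;=\; \min_{\nu\in\pr(X)}\sum_{i=1}^n\tfrac12 W_2^2(\mu_i,\nu),
\end{equation}
and show the common value is attained by a plan induced by a single map. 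Existence of a minimizer for \eqref{KP} is standard (the feasible set is tight and weakly compact, and $c$ is lower semicontinuous and nonnegative); the real content is that \eqref{MP}$=$\eqref{KP} and that the optimizer is unique.

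For "$\le$" I would avoid pointwise barycenters, which need not exist in the non-proper case {\bf B}: for any $\pi\in\Pi(\mu_1,\dots,\mu_n)$ and $\eps>0$, measurable selection yields $y_\eps\colon X^n\to X$ with $\sum_i\tfrac12\ds^2(x_i,y_\eps(x))\le c(x)+\eps$, and $\nu_\eps:=(y_\eps)_\#\pi$ then satisfies $\sum_i\tfrac12 W_2^2(\mu_i,\nu_\eps)\le\int c\,\d\pi+\eps$ via its coordinate couplings; letting $\eps\downarrow0$ gives "$\le$". For "$\ge$" I would invoke the existence, uniqueness and, crucially, the \emph{absolute continuity} of the barycenter $\bar\nu$ of $\Omega$, delivered by Theorem \ref{th2:intro} in case {\bf A} and by Theorem \ref{th1:intro} together with Theorems \ref{th:existenceRCD} and \ref{uniqueness in infinite space} in case {\bf B} (this is where $\mu_1\ll\mm$, resp. $\mu_i\ll\mm$, enters). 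Since $\bar\nu\ll\mm$ and $(X,\ds,\mm)$ is $\rcd$, the optimal transport from $\bar\nu$ to each $\mu_i$ is induced by a unique map $S_i$ \cite{Gigli12a}; setting $\hat\pi:=(S_1,\dots,S_n)_\#\bar\nu\in\Pi(\mu_1,\dots,\mu_n)$ gives
\begin{equation}
\int c\,\d\hat\pi\le\int\sum_i\tfrac12\ds^2\big(S_i(y),y\big)\,\d\bar\nu(y)=\sum_i\tfrac12 W_2^2(\mu_i,\bar\nu),
\end{equation}
so $\hat\pi$ attains the minimum and the displayed identity holds.

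It remains to upgrade $\hat\pi$ to a Monge solution and prove uniqueness. As $\hat\pi$ is concentrated on the graph $\{(S_1(y),\dots,S_n(y)):y\in X\}$ and $\mu_1=(S_1)_\#\bar\nu\ll\mm$, essential injectivity of $S_1$ (valid in essentially non-branching $\rcd$ spaces when both marginals are absolutely continuous) lets me invert it: $S_1^{-1}$ is the optimal map from $\mu_1$ to $\bar\nu$, so $T_i:=S_i\circ S_1^{-1}$ pushes $\mu_1$ to $\mu_i$ and $\hat\pi=(\Id,T_2,\dots,T_n)_\#\mu_1$, which proves \eqref{MP}$=$\eqref{KP}. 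For uniqueness I would take any optimizer $\pi^*$ of \eqref{KP}, apply the $\eps$-selection to get near-barycenters $\nu_\eps$ with $\sum_i\tfrac12 W_2^2(\mu_i,\nu_\eps)\to\min$; uniqueness of the barycenter forces $\nu_\eps\to\bar\nu$ in $W_2$, whence the coordinate couplings of $\pi^*$ converge to the unique optimal couplings between $\bar\nu$ and $\mu_i$, namely the graphs of $S_i$. Thus $\pi^*=\hat\pi$ and $(T_2,\dots,T_n)$ is unique $\mu_1$-a.e.

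The main obstacle is the passage from the Kantorovich optimizer to a map, i.e. the identity \eqref{MP}$=$\eqref{KP}. Its two pillars are (i) the absolute continuity $\bar\nu\ll\mm$, which is the genuinely new input furnished by the Jensen-type inequalities \eqref{intro:WJI1} and \eqref{intro: JIKN} rather than by any local compactness, and (ii) the invertibility of $S_1$, for which I must use that $(X,\ds,\mm)$ is essentially non-branching together with $\mu_1\ll\mm$. The convergence $\nu_\eps\to\bar\nu$ underlying the uniqueness step is the most delicate point: it requires the barycenter functional $\nu\mapsto\sum_i W_2^2(\mu_i,\nu)$ to have a unique minimizer with stability of near-minimizers, which I would derive from the strict $K$-convexity encoded in the $\mathrm{EVI}_K$/$\mathrm{EVI}_{K,N}$ gradient-flow estimates that drive Theorems \ref{th1:intro} and \ref{th2:intro}.
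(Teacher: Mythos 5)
Your proposal reproduces, in essence, the paper's own argument for the \emph{finite-entropy} case (Proposition \ref{finite entropy MOT}): the identity between \eqref{KP} and the barycenter problem via $\varepsilon$-measurable selection in one direction and the plan $(S_1,\dots,S_n)_\sharp\bar\nu$ in the other, followed by composition with $S_1^{-1}$ to get a Monge solution. The gap is in how you obtain the absolute continuity (and uniqueness) of the barycenter $\bar\nu$ under the actual hypotheses of Theorem \ref{MOT}. You invoke Theorem \ref{th2:intro} in case {\bf A} and Theorem \ref{th1:intro} in case {\bf B}, claiming ``this is where $\mu_1\ll\mm$, resp.\ $\mu_i\ll\mm$, enters.'' But those theorems do not apply under mere absolute continuity: Theorem \ref{th2:intro} (i.e.\ Corollary \ref{entropy finite EVI -k,N}) requires $\Omega\big(\{\mu:\ {\rm Ent}_{\mm}(\mu)<\infty\}\big)>0$, i.e.\ ${\rm Ent}_\mm(\mu_1)<\infty$, and Theorem \ref{th1:intro}/\ref{WJI} requires $\int {\rm Ent}_\mm(\mu)\,\d\Omega(\mu)<\infty$, i.e.\ \emph{all} marginals with finite entropy. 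Absolute continuity does not imply finite entropy (a density can be integrable while $\rho\ln\rho$ is not), so as written your argument only proves Proposition \ref{finite entropy MOT}, whereas the entire point of Theorem \ref{MOT} is to remove the finite-entropy hypothesis.

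The paper closes this gap with a localization argument that is absent from your proposal: assuming some optimal plan $\pi$ is not graph-induced on a set $E$ of positive $\pi$-measure, one intersects $E$ with $\{x_1:\ C^{-1}\le\rho_1(x_1)\le C\}$ (which exhausts $\mu_1$-almost all of $X$ as $C\to\infty$), and considers the normalized restriction $\tilde\pi:=\pi(\tilde E)^{-1}\,\pi\llcorner_{\tilde E}$. By linearity of the Kantorovich functional, $\tilde\pi$ is still optimal for its own marginals, and its first marginal now has bounded density, hence finite entropy; Proposition \ref{finite entropy MOT} then forces $\tilde\pi$ to be graph-induced, contradicting the choice of $E$ (case {\bf B} requires an additional induction to truncate all marginals). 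You would need to add this truncation-and-restriction step to make your proof cover the stated hypotheses. A secondary, lesser issue: your uniqueness argument rests on ``$\nu_\varepsilon\to\bar\nu$ in $W_2$'' for near-minimizers, which you justify only by appeal to strict convexity; in the non-proper setting stability of near-minimizers is not automatic, and the paper instead argues via measurable selection of pointwise barycenters plus uniqueness of the Wasserstein barycenter and of the optimal maps $\bar\mu\to\mu_i$, which avoids any limiting procedure.
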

 
 This theorem is, to the best of our knowledge, the first of this kind for multi-marginal optimal transport problems without using any of the local structure of the underling space; and the first  result  concerning infinite-dimensional spaces. Furthermore, we  prove the existence, uniqueness and absolute continuity of the Wasserstein barycenter of measure with finite support,  \emph{without the  finite entropy  condition}. This generalized some results concerning Wasserstein geodesics,  by T. Rajala and his co-authors \cite{R-I, R-IG,  RS-N, gigli2016optimal}, to  Wasserstein barycenters.
 
\begin{theorem}[Theorem \ref{absolutely continuous}]
	Let $(X,\ds,\mm)$ be a metric measure space. Assume $\mu_1,\dots,\mu_n\in \pr_2(X, \ds)$, then there exists a unique Wasserstein barycenter $\bar{\mu}$ and it is absolutely continuous with respect to $\mm$ if one of the following conditions holds:
	\begin{itemize}
	\item [{\bf A.}]  $(X,\ds,\mm)$ is an ${\rm RCD}(K,N)$ space, and $\mu_1\ll \mm$.	
	\item [{\bf B.}]   $(X,\ds,\mm)$ is an ${\rm RCD}(K,\infty)$ space, and $\mu_i\ll \mm,i=1,\dots,n$.
	\end{itemize}
\end{theorem}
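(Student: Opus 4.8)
The plan is to route everything through the multi-marginal optimal transport problem of Theorem \ref{MOT}, following the correspondence of Agueh--Carlier \cite{AguehCarlier} and Kim--Pass \cite{KimPassAIM}, and then to extract absolute continuity from the Jensen inequalities already established in Theorems \ref{th1:intro} and \ref{th2:intro} by a truncation argument that never invokes finiteness of the entropy of the $\mu_i$. Write the barycenter problem as the minimization of $\nu\mapsto\sum_i W_2^2(\mu_i,\nu)$, i.e.\ the barycenter of $\Omega=\frac1n\sum_{i=1}^n\delta_{\mu_i}$, and let $c$ be the cost \eqref{intro:mm}. First I would solve the Kantorovich problem \eqref{KP}: since each $\mu_i\in\pr_2(X,\ds)$ the admissible set $\Pi(\mu_1,\dots,\mu_n)$ is tight and $c$ is lower semicontinuous, so the direct method produces an optimal plan $\pi$. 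By Theorem \ref{MOT} (case A or B) this $\pi$ is unique and induced by a map out of $\mu_1$. Choosing a Borel selection $b\colon X^n\to X$ of a point-barycenter of the configuration (a minimizer of $y\mapsto\sum_i\frac12\ds^2(x_i,y)$), I set $\bar\mu:=b_\#\pi$. Using the maps $x\mapsto(x_i,b(x))$ as couplings, and in the converse direction gluing optimal couplings of $(\mu_i,\nu)$ along a common $\nu$, one gets the Agueh--Carlier identity relating the barycenter minimum to $\min_\pi\int c\,\d\pi$, together with the fact that this minimum is attained at $\bar\mu$; hence $\bar\mu$ is a Wasserstein barycenter.

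For uniqueness I would run the gluing in reverse: any barycenter $\bar\nu$ yields, by disintegrating the optimal couplings $(\mu_i,\bar\nu)$ over $\bar\nu$, a plan on $X^n\times X$ whose $X^n$-marginal is an optimal multi-marginal plan, hence equal to $\pi$ by the uniqueness in Theorem \ref{MOT}; the equality case in the Agueh--Carlier estimate then forces the $X$-coordinate to be a point-barycenter of the configuration, so $\bar\nu=b_\#\pi=\bar\mu$. This last step needs the point-barycenter to be unique for $\pi$-a.e.\ configuration, which I would deduce from essential non-branching of RCD spaces (the same mechanism that makes midpoints a.e.\ unique in \cite{RS-N}); alternatively one may invoke the weak Monge property used in the existence-and-uniqueness theorem following Theorem \ref{th1:intro}.

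The heart of the matter is absolute continuity \emph{without} assuming $\int {\rm Ent}_{\mm}\,\d\Omega<\infty$, and here I would truncate the optimal plan rather than the entropy. Fix an exhaustion of $X$ by sets of finite $\mm$-measure and, in case B (all $\mu_i\ll\mm$), let $A_k\subset X^n$ be the set where every density $\rho_i(x_i)$ lies in $[1/k,k]$ and every $x_i$ lies in the $k$-th exhausting set; in case A (only $\mu_1\ll\mm$) impose this condition on $\rho_1$ alone. Restricting $\pi$ to $A_k$ gives $\pi^k\uparrow\pi$, and since restriction preserves $c$-cyclical monotonicity, $\pi^k$ is optimal for its own marginals; consequently $b_\#\pi^k$ coincides, up to its total mass, with the Wasserstein barycenter of the normalized marginals of $\pi^k$ (again by the correspondence of the first step). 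These marginals have bounded densities supported on finite-measure sets, hence finite entropy, so in case B Theorem \ref{th1:intro} (hypothesis A) and in case A Theorem \ref{th2:intro} — which requires only that the finite-entropy marginals have positive weight, supplied by the truncated $\mu_1$ — show that each $b_\#\pi^k$ is absolutely continuous. Finally $b_\#\pi^k\uparrow b_\#\pi=\bar\mu$ as measures, and a monotone increasing limit of measures each vanishing on $\mm$-null sets again vanishes on $\mm$-null sets; therefore $\bar\mu\ll\mm$.

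I expect the genuine obstacle to be the infinite-dimensional case B, where the absence of local compactness means a point-barycenter of a configuration need not exist a priori, so the map $b$ and the identity $\bar\mu=b_\#\pi$ are not immediate. I would circumvent this by first establishing existence of the truncated barycenters from the existence theorem following Theorem \ref{th1:intro} (applicable because the truncated problems have finite entropy), reading off a point-barycenter of $\pi^k$-a.e.\ configuration from the optimal maps out of that barycenter, and using essential non-branching to check these selections are consistent across $k$ and assemble into a single Borel $b$ defined $\pi$-a.e.; the monotone-limit step then upgrades absolute continuity to the full barycenter. The other delicate points — that restrictions of $c$-optimal plans stay optimal for the cost \eqref{intro:mm}, and that the truncated pieces genuinely have finite entropy (which forces the two-sided density bound together with finite-measure support, and uses $\sigma$-finiteness of $\mm$) — are technical but, unlike the finite-entropy route of Theorem \ref{th1:intro}, are precisely what let one dispense with any entropy assumption on the $\mu_i$ themselves.
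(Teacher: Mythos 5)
Your proposal is correct and follows the same overall strategy as the paper: solve the multi-marginal Kantorovich problem, invoke Theorem \ref{MOT} for uniqueness of the optimal plan $\pi$ and for $\pi$-a.e.\ existence and uniqueness of point-barycenters, push $\pi$ forward by a barycentric selection $b$ to produce a Wasserstein barycenter (this is exactly the mechanism of Theorem \ref{barycenter space}), and remove the entropy hypothesis by truncating to marginals with bounded densities and applying the finite-entropy results (Proposition \ref{finite support}, equivalently Theorems \ref{th1:intro} and \ref{th2:intro}). You differ in two local steps. For absolute continuity, you truncate globally and pass to a monotone limit $b_\sharp\pi^k\uparrow b_\sharp\pi$, which cleanly gives $b_\sharp\pi\ll\mm$; the paper instead argues by contradiction, pulling a null set $F$ with $\bar\mu(F)>0$ back through the optimal map $S$ from $\mu_1$ to an \emph{arbitrary} barycenter $\bar\mu$, truncating $\rho_1$ on $S^{-1}(F)$, and deriving $\bar\nu(F)=0$ and $\bar\nu(F)=1$ simultaneously for the truncated barycenter $\bar\nu$. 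The paper's variant thus shows directly that every barycenter is absolutely continuous, whereas yours shows it for $b_\sharp\pi$ and must then identify every barycenter with $b_\sharp\pi$; you do this via the superposition argument (Theorem \ref{th:sp}) combined with uniqueness of $\pi$ and of a.e.\ point-barycenters, while the paper instead uses strict convexity of $\nu\mapsto W_2^2(\mu,\nu)$ under linear interpolation (Proposition \ref{uniqueness in finite space}, Theorem \ref{uniqueness in infinite space}). Both orderings close the loop; note that in case B the strict-convexity route needs absolute continuity of the competing barycenters before the weak Monge property applies, so your identification-based uniqueness is, if anything, more self-contained.

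Two smaller remarks. Your worry about case B is already resolved by the statement of Theorem \ref{MOT}, whose conclusion includes that for $\pi$-a.e.\ configuration the point-barycenter exists and is unique; your proposed workaround (building $b$ from optimal maps of truncated problems) is essentially how the paper's own truncation and induction in case B proceed, so it is redundant rather than wrong. Finally, justify optimality of the restricted plans $\pi^k$ by the mass-replacement argument (if a competitor with the same marginals beat $\pi^k$, grafting it into $\pi$ would beat $\pi$), as the paper does via ``linearity of the Kantorovich problem''; $c$-cyclical monotonicity is a delicate criterion for optimality in the multi-marginal setting and is best avoided.
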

 
\bigskip

\noindent {\bf Organization of the paper:}  In Section \ref{sect2} and \ref{pre}, we collect some preliminaries in the theory of metric measure spaces, optimal transport and curvature-dimension condition. In Section \ref{section:existence} we introduce some general results,  concerning  existence and  uniqueness of the Wasserstein barycenter.   Most of the results in this part are independent of the  curvature-dimension conditions,   and are therefore of independent interest.  Section \ref{main} is devoted to proving the main theorems.  In Section \ref{bcd}, we introduce the concept of Barycenter Curvature-Dimension condition, and apply our  theory to prove some geometric  inequalities.
More detailed descriptions appear at the beginning of each section.

\bigskip

\noindent \textbf{Declaration.}
{The  authors declare that there is no conflict of interest and the manuscript has no associated data.}

\medskip

\noindent \textbf{Acknowledgement}:   This  work is supported by  the Young Scientist Programs  of the Ministry of Science \& Technology of China ( 2021YFA1000900, 2021YFA1002200),  and NSFC grant (No.12201596). The authors thanks  Emanuel Milman and Lorenzo Dello Schiavo for their interest and suggestions on this paper.

\section{Geodesic and barycenter}\label{sect2}
Let us summarize some definitions and basic results about (extended) metric spaces. For proofs and further details we refer to the text books \cite{AT-T} and \cite{BBI-A}. Throughout this section,  we denote by $X=(X, \tau)$ a Hausdorff topological space.
\subsection{Geodesic spaces}

Let $\ds$ be a  metric on $X$, this means $\ds : X \times X \to  [0, +\infty)$ is a symmetric function satisfying the triangle inequality, with $\ds(x, y) = 0$ if and only if $x = y$. Let $\gamma$ be a curve in $X$, i.e.  a continuous map from $[0,1]$ to $X$,  its length is defined by
\[
L(\gamma):=\sup_{J\in \N} \sup_{0=t_0\leq t_1\leq ...\leq t_J=1} \sum_{j=1}^J \ds(\gamma_{t_{j-1}}, \gamma_{t_j}).
\]
Clearly $L(\gamma) \geq \ds(\gamma_0, \gamma_1)$.
We say that $X$ is a length space,  if the distance between two points $x,y \in X$ is the infimum of the lengths of curves from $x$ to $y$. Such a space is path connected.

We denote by 
$$
\geo(X) : = \Big\{ \gamma \in C([0,1], X):  \ds(\gamma_{s},\gamma_{t}) = |s-t| \ds(\gamma_{0},\gamma_{1}), \text{ for every } s,t \in [0,1] \Big\}
$$
the space of constant speed geodesics,  equipped with the canonical supremum norm. The metric space $(X,\d)$ is called a {geodesic space} if for each $x,y \in X$ 
there exists $\gamma \in \geo(X)$ so that $\gamma_{0} =x, \gamma_{1} = y$. This curve is \emph{not} required to be unique.

The following lemma is a characterization of geodesic space using midpoints.
\begin{lemma}
A complete metric space $(X,\ds)$ is a geodesic space if and only if for any $x, y\in X$,  there is $z \in X$ such that
\[
\ds(x,z) = \ds(z,y) = \frac 12 \ds(x,y).
\]
Any point $z \in X$ with the above properties will be called midpoint of $x$ and $y$.
\end{lemma}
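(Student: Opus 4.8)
The forward implication is immediate: if $(X,\ds)$ is geodesic then, given $x,y$, I pick any $\gamma\in\geo(X)$ with $\gamma_0=x$ and $\gamma_1=y$ and take $z=\gamma_{1/2}$; the defining relation of $\geo(X)$ gives $\ds(x,z)=\ds(z,y)=\tfrac12\ds(x,y)$ at once. So the whole content lies in the reverse implication, which I would prove by the classical dyadic midpoint construction together with an appeal to completeness.

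Fix $x,y\in X$ and write $D:=\ds(x,y)$. The plan is to build a map $\gamma$ on the dyadic rationals $\mathbb D:=\{k2^{-n}:n\in\N,\ 0\le k\le 2^n\}$ by iterating the midpoint hypothesis. I set $\gamma_0=x$, $\gamma_1=y$, and, having defined $\gamma$ at all points of denominator $2^n$, I define $\gamma_{(2k+1)2^{-(n+1)}}$ to be a midpoint of $\gamma_{k2^{-n}}$ and $\gamma_{(k+1)2^{-n}}$ (which exists by assumption), leaving the already-defined values unchanged. An induction on $n$ then shows that consecutive points at level $n$ satisfy $\ds(\gamma_{k2^{-n}},\gamma_{(k+1)2^{-n}})=D2^{-n}$: the base case is the definition of $D$, and the inductive step is exactly the halving property of midpoints. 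Summing along a chain of consecutive points and using the triangle inequality yields $\ds(\gamma_s,\gamma_t)\le|s-t|\,D$ for all $s,t\in\mathbb D$ (put $s,t$ over a common denominator first).

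Thus $\gamma$ is $D$-Lipschitz on the dense set $\mathbb D\subseteq[0,1]$; since $(X,\ds)$ is complete, it extends uniquely to a continuous (indeed $D$-Lipschitz) map $\gamma:[0,1]\to X$ with $\gamma_0=x$, $\gamma_1=y$, and $\ds(\gamma_s,\gamma_t)\le|s-t|\,D$ for all $s,t\in[0,1]$, the bound passing to the limit by continuity of $\ds$. Finally I would upgrade this bound to an equality by a squeezing argument: for $0\le s\le t\le 1$,
\[
D=\ds(\gamma_0,\gamma_1)\le \ds(\gamma_0,\gamma_s)+\ds(\gamma_s,\gamma_t)+\ds(\gamma_t,\gamma_1)\le sD+(t-s)D+(1-t)D=D,
\]
so every inequality is an equality, and in particular $\ds(\gamma_s,\gamma_t)=|s-t|\,D=|s-t|\,\ds(\gamma_0,\gamma_1)$. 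Hence $\gamma\in\geo(X)$ joins $x$ to $y$, and $X$ is geodesic.

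The only slightly delicate points are the correct bookkeeping in the inductive distance estimate (so that one genuinely gets $\le|s-t|D$ for \emph{all} dyadic pairs, not just adjacent ones) and the use of completeness to pass from $\mathbb D$ to $[0,1]$. Neither is a genuine obstacle: the first is a telescoping triangle inequality and the second is the standard uniformly-continuous-extension theorem, so once the dyadic scheme is set up the argument is essentially routine.
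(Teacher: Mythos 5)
Your proof is correct: the forward direction, the dyadic midpoint construction, the telescoping Lipschitz estimate, the completeness-based extension, and the final squeezing argument (where equality of the sum forces equality term by term) are all sound. The paper itself states this lemma without proof, deferring to the textbooks it cites for preliminaries, and your argument is precisely the classical one found there, so there is nothing further to compare.
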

\medskip
In addition, we have the following characterization of the completeness of a length space.
\begin{lemma}[Hopf--Rinow]
Let $(X,\ds)$ be a complete length space, then:
\begin{enumerate}
\item  [(1)] The closure of $B_r(x)$,  the open ball of radius r around $x \in X$, is the closed ball $\{y\in X: \ds(x, y)\leq r\}$;
\item [(2)] $X$ is locally compact if and only if each closed ball  is compact;
\item [(3)] If $X$ is locally compact, then it is a geodesic space.
\end{enumerate}
\end{lemma}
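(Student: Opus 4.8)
The plan is to dispatch the three assertions in turn, with part (2) carrying essentially all of the difficulty; parts (1) and (3) are comparatively soft consequences of the length structure together with completeness. For part (1), since $\ds(x,\cdot)$ is continuous the inclusion $\overline{B_r(x)}\subseteq \bar B_r(x):=\{y:\ds(x,y)\le r\}$ holds in any metric space, so I only need to show that every $y$ on the sphere $\{\ds(x,y)=r\}$ lies in the closure of the open ball. Fixing such a $y$ and $\epsilon>0$, the length property gives a curve $\gamma$ from $x$ to $y$ with $L(\gamma)<r+\epsilon$; writing $f(t)=L(\gamma|_{[0,t]})$ for the length of the initial arc, $f$ is continuous with $f(0)=0$ and $f(1)=L(\gamma)>r-\epsilon$, so by the intermediate value theorem there is $t$ with $f(t)=r-\epsilon$. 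The triangle inequality along $\gamma$ yields $\ds(x,\gamma_t)\le f(t)<r$ and $\ds(\gamma_t,y)\le L(\gamma)-f(t)<2\epsilon$, so $\gamma_t\in B_r(x)$ is within $2\epsilon$ of $y$; letting $\epsilon\to0$ gives $y\in\overline{B_r(x)}$. Note this step does not use completeness.

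For part (2), the implication ``closed balls compact $\Rightarrow$ locally compact'' is immediate, as a small closed ball is then a compact neighbourhood of each point. The converse is the heart of the matter. Fix $x$ and set
\[
\rho:=\sup\{\,r\ge0:\ \bar B_r(x)\ \text{is compact}\,\}.
\]
Local compactness gives $\rho>0$, and the admissible radii form an interval (a closed subset of a compact ball is compact). Arguing by contradiction, suppose $\rho<\infty$. First I would show the supremum is attained: $\bar B_\rho(x)$ is closed in the complete space $X$, hence complete, so it suffices to check total boundedness. Given $\epsilon>0$, the compact ball $\bar B_{\rho-\epsilon}(x)$ has a finite $\epsilon$-net $F$, and running the length-space construction of part (1) shows that any $y\in\bar B_\rho(x)$ admits a point $z$ on a near-minimizing curve from $x$ with $\ds(x,z)\le\rho-\epsilon$ and $\ds(z,y)=O(\epsilon)$; hence $y$ lies within $O(\epsilon)$ of $F$, so $\bar B_\rho(x)$ is totally bounded and therefore compact. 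Second, I would exploit local compactness uniformly over this compact set: each of its points has a compact closed ball of some positive radius, and a finite subcover of the corresponding half-radius balls produces a single $\delta>0$ with $\bar B_\delta(z)$ compact for every $z\in\bar B_\rho(x)$. Then $\bar B_{\rho+\delta/2}(x)$ is compact: given a sequence $(y_n)$ there, the length property yields $z_n\in\bar B_\rho(x)$ with $\ds(z_n,y_n)\le \delta/2+o(1)$, a subsequence gives $z_n\to z\in\bar B_\rho(x)$, and thereafter $y_n$ lies in the compact ball $\bar B_\delta(z)$ and has a convergent subsequence. This contradicts the definition of $\rho$, so $\rho=+\infty$ and all closed balls are compact.

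Finally, for part (3), once part (2) makes $(X,\ds)$ proper I would produce geodesics between arbitrary $x,y$. By the definition of a length space, choose curves $\gamma_n$ from $x$ to $y$ with $L(\gamma_n)\to\ds(x,y)$; reparametrizing to constant speed on $[0,1]$ renders them uniformly Lipschitz with images contained in the compact ball $\bar B_{\ds(x,y)+1}(x)$, hence equicontinuous and pointwise relatively compact. By Arzel\`a--Ascoli a subsequence converges uniformly to a curve $\gamma$, and lower semicontinuity of length gives $L(\gamma)\le\liminf_n L(\gamma_n)=\ds(x,y)\le L(\gamma)$, so $\gamma$ is a shortest path whose constant-speed reparametrization lies in $\geo(X)$. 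Equivalently, extracting a limit of the midpoints of the $\gamma_n$ produces a metric midpoint of $x,y$, and the midpoint characterization of geodesic spaces stated above applies.

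I expect the genuine obstacle to be the converse direction of part (2), and within it the attainment claim that $\bar B_\rho(x)$ is compact, where completeness and the length structure must be combined; the $\epsilon$-bookkeeping in the total-boundedness estimate (and the mild case split according to whether $\ds(x,y)$ is below or above $\rho-\epsilon$) is where the care is concentrated, while everything downstream follows from properness by standard compactness arguments.
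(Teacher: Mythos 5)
Your proof is correct: the paper itself states this lemma without proof (it is a classical preliminary, with the reader referred to the textbooks \cite{AT-T} and \cite{BBI-A}), and your argument is precisely the standard Hopf--Rinow--Cohn-Vossen proof found there --- part (1) by cutting a near-minimizing curve at arc length $r-\epsilon$, part (2) by the supremum-of-compact-radii argument (attainment via total boundedness plus completeness, then extension by a uniform local-compactness radius over the compact ball $\bar B_\rho(x)$), and part (3) by Arzel\`a--Ascoli plus lower semicontinuity of length once properness is available. All the delicate points (the case split in the total-boundedness estimate, using closedness in a complete space, and the reparametrization at the end) are handled correctly, so there is nothing to fix.
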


\subsection{Extended metric spaces}
In this subsection we introduce the  notion of extended metric space.  Abstract Wiener spaces and configuration spaces over Riemannian manifolds are particular examples of extended metric spaces.

It can be seen from \cite{ambrosio2016optimal} that most of the analytic tools in metric spaces
can be extend in a natural way to extended metric spaces. 

\bdf [Extended metric spaces] We say that $(X,\ds)$ is an extended metric space if $\ds : X \times X \to  [0, +\infty]$ is a symmetric function satisfying the triangle inequality, with $\ds(x, y) = 0$ if and only if $x = y$.
\edf

Since an extended metric space can be seen as the disjoint union of the equivalence classes induced by the equivalence relation
\[x \backsim y \iff \ds(x, y) < +\infty,
\]
and since any equivalence class is indeed  a metric space, many results and definitions extend with no effort to extended metric spaces. For example, we say that an extended metric space $(X,\ds)$ is complete (resp. geodesic, length,...) if all metric spaces $X[x] = \{y : y \backsim x\}$ are complete (resp. geodesic, length,...). 

\subsection{Barycenter spaces}
Let $(X, \ds)$ be an extended metric space and $\tau$ be a Hausdorff topology on $X$. We assume that $\tau$ and $\ds$ are compatible, in the sense of \cite[Definition 4.1]{ambrosio2016optimal}. In this case, we say that $(X, \tau, \ds)$ is an extended metric-topological space.

 Let $\mathcal B(\tau)$ be the Borel $\sigma$-algebra of $\tau$ and let $\pr(X)$ be the set of  Radon probability measures on $X$. Denote by $\pr_2(X, \ds)$ the set of $\mu \in  \pr(X)$  such that $$\int \ds^2(x_0, y)\,\d \mu(y) < \infty~~\text{for some}~~x_0 \in  X.$$
In general,  the choice of such $x_0$ is not arbitrary. If $(X, \d)$ is a metric space,  $\int \ds^2(x_0, y)\,\d \mu(y) < \infty$ for some $x_0\in X$  if and only if $\int \ds^2(x, y)\,\d \mu(y) < \infty$ for any $x\in X$. For $\mu\in \pr(X)$, the value  ${\rm Var}(\mu):=\inf_{x\in X}\int \ds^2(x, y)\,\d \mu(y)\in [0,+\infty]$ is called the \emph{variance} of $\mu$. We can see that $\mu\in \pr_2(X, \ds)$ if and only if ${\rm Var}(\mu)<\infty$ (i.e.  $\mu$  has finite variance).

We also denote by $\pr_0(X)$ the set of all $\mu \in  \pr (X)$ of the form $\mu= \frac 1n \sum_{i=1}^n \delta_{x_i}$
with finite points $x_i \in X$. Here and henceforth, $\delta_x$ denotes the Dirac
measure on the point $x \in  X$. 

\bdf[Barycenter]
 Let $(X, \ds)$ be an extended metric space and let $\mu\in \pr_2(X, \ds)$ be a probability measure with finite variance. We call $\bar{x}\in X$ a barycenter of $\mu$ if 
 \[
 \int_{X} \ds^2(\bar{x},z)\,\d\mu(z)=\min_{x\in X}\int \ds^2(x, y)\,\d \mu(y)<\infty.
 \]
\edf

\medskip

Now we can introduce the notion of barycenter space. We remark that there is a different notion of ``barycenter metric space", as a generalization of Hadamard space, introduced by Lee and Naor in \cite{LeeNaor2005}.

\begin{definition}[Barycenter space]\label{def:bcspace}
We say that an extended metric space is a \emph{barycenter  extended  metric space},  or \emph{barycenter space} for simplicity, if   any $\mu \in \pr_0(X)$ with finite variance has a barycenter.
\end{definition}

\medskip
Here we list a couple of examples which are barycenter spaces. More discussions  can be found in \cite{sturm2003probability}, \cite{NaorSilberman2011}, \cite{kuwae2014jensen} and  \cite{Ohtabc, Ohta2007convex}.
\begin{example}\label{example1}
The following spaces are barycenter spaces:
\begin{itemize}
\item proper spaces,  or locally compact geodesic spaces;
\item metric spaces with non-positive curvature (or {\rm NPC} spaces), this includes complete, simply connected Riemannian manifold with non-positive (sectional) curvature and Hilbert spaces;

\item uniform convex metric spaces, introduced by James A. Clarkson \cite{Clarkson1936} in 1936, this includes uniformly convex Banach spaces such as $L^p$ spaces with $p>1$;
\item abstract Wiener space $(X,  H, \mu)$ where 
$X$ is a Banach space that contains  a Hilbert space 
$ H$ as a dense subspace, equipped with the Cameron--Martin distance 
\begin{equation*}
\ds_ H(x, y):=
\left \{\begin{array}{ll}
\|x-y\|_ H &\text{if}~ x-y\in  H\\
+\infty &\text{otherwise}.
\end{array}\right.
\end{equation*} 
\item Wasserstein spaces over  Riemannian manifolds \cite{KimPassAIM} and  Alexandrov spaces \cite{JiangWasserstein}.
\end{itemize}
\end{example}

\subsection{Convex functions}
By definition, a subset $\Omega \subset  X$ is convex if for any $x, y\in \Omega$,  there {\bf is} a  geodesic from $x$ to $y$ that lies entirely in $\Omega$. It is totally convex
if for any $x, y\in \Omega$,  {\bf any} geodesic in $X$ from $x$ to $y$ lies in $\Omega$. Given $K \in \R$, a function $F: X \to \R \cup \{+\infty\}$ is said to be weakly (strongly) \emph{$K$-geodesically convex} if there is  (for any) geodesic $(\gamma_t)_{t\in [0,1]}$ and any $t\in [0,1]$,
\[
F(\gamma_t)\leq t F(\gamma_1)+(1-t)F(\gamma_0)- Kt(1-t)\ds^2(\gamma_0, \gamma_1).
\]
In the case when $(X, \ds)$ is the Euclidean space $\R^n$ equipped with the Euclidean norm, and $F \in  C^2(X)$, this is equivalent to say that ${\rm Hess}_F \geq  K$.

With the notion of barycenter, we introduce a notion called barycenter convexity. It can be seen that this convexity is equivalent to the geodesic convexity, if  $(X, \d)$ is the Euclidean space.

\bdf[Barycenter convexity]\label{def:baryconv}
Let $(X, \ds)$ be a barycenter space.
Given $K \in \R$, a function $F: X \to \R \cup \{+\infty\}$ is said to be  weakly (strongly)  \emph{$K$-barycentrically convex} if for any  $\mu\in \pr(X)$  with finite variance,  there is a (for any) barycenter $\bar{x}$ of $\mu$,  such that
\[
F(\bar{x})\leq \int F(x)\,\d \mu(x)- \frac K2 {\rm Var}({\mu}).
\] 
\edf

\section{Geometry of the Wasserstein space}\label{pre}
In this section,  we add a  reference measure  in an extended metric-topological space as follows.
\bdf[Extended metric measure space, cf. \cite{ambrosio2016optimal}] We say that $\ms$ is an extended metric measure space if:
\begin{itemize}
\item [(a)] $(X, \tau,  \ds)$ is an extended metric-topological space and $(X, \ds)$ is complete;
\item [(b)] $\mm$ is a  non-negative Radon probability measure on $(X, \mathcal B(\tau))$ with full support.
\end{itemize}

\edf

\subsection{Wasserstein barycenter and multi-marginal optimal transport}
Given an extended metric-topological space  $(X, \tau,\ds)$.
The $L^{2}$-Kantorovich-Wasserstein,   or called $L^{2}$-optimal transport distance,   $W_2(\mu,\nu)$ between $\mu, \nu\in \pr (X)$ is given by
\begin{equation}\label{eq:W2def}
	W_2^2(\mu,\nu) := \inf_{ \pi\in\Pi(\mu, \nu)} \int_{X\times X} \ds^2(x,y) \, \d\pi(x, y),
\end{equation}
where the infimum is taken over all probability Radon measures $\pi$ on $X\times X$ whose marginals are 
$\mu$ and $\nu$. It can be seen that $W_2$ is also an extended metric on $\pr(X)$ (in fact, $(\mathcal{P}_2(X, \ds), W_2)$ is a metric space if $(X, \ds)$ is a metric space,  $(\mathcal{P}_2(X, \ds), W_2)$ is  geodesic if $(X,\ds)$ is geodesic).   Therefore,   it makes sense to talk about barycenters in the Wasserstein space $\mathcal W_2:=(\pr(X), W_2)$.  Note that  we also need to specify  a Hausdorff topology $\bar \tau$  and a Borel set $\mathcal B(\bar \tau)$ on  $\pr(X)$ in advance,    so that the following definition is well-posed.   In this paper we always assume that,  for any $\mu \in \pr(X)$,  the  single-point set $\{\mu\}$  is  measurable.

\begin{definition}[Wasserstein barycenter]\label{def:barycenter}
	Let $(X,\tau, \ds)$ be an extended metric space and let $\bar \tau$ be a Hausdorff topology on  $\pr(X)$ so that $(\pr(X),  \bar \tau, W_2)$ is also an extended metric-topological space.    Let $\Omega\in  \pr_2(\pr(X), W_2)$ be a Radon probability  measure on $(\pr(X), \mathcal B(\bar \tau))$ with finite variance. We call $\bar{\nu}\in \pr(X)$ a  \emph{Wasserstein barycenter} of $\Omega$ if
	
	\begin{equation}
	\int_{\pr(X)} W_2^2(\mu,\bar{\nu})\,\d\Omega(\mu)=\min_{\nu\in \pr(X)}\int_{\pr(X)} W_2^2(\mu, \nu)\,\d \Omega(\mu).
	\end{equation}
\end{definition}

\medskip

We will also study a multi-marginal version of optimal transport, which is a nature generalization of classical  (two-marginal) optimal transport.  This problem  has been studied  in different forms and in different subjects  such as economics and statistics.  In particular,  Gangbo and Swiech\cite{gangbo1998optimal} studied the multi-marginal optimal transport  map in the Euclidean setting,  Agueh--Carlier \cite{AguehCarlier} studied Wasserstein barycenter  problem  via multi-marginal optimal transport,  we refer to \cite{PassSurvey} for a survey on this fast-developing topic.  

\begin{definition}[Multi-marginal optimal transport]\label{def:multi-marginal optimal transport}
	Let $\mu_1,...,\mu_n \in \pr (X)$ and let $c: X^n \to \R$ be a lower semi-continuous cost function. The multi-marginal optimal transport problem of Monge type is to study
	\begin{equation}\label{3.3}
		\inf_{ T_2,\dots,T_n} \int_{X} c(x_1,T_2(x_1),\dots,T_n(x_1)) \, \d\mu_1(x_1),
	\end{equation}
	where for each $i=2,\dots,n$, the map $T_i:X\to X$ pushes the measure $\mu_1$ forward to $\mu_i$,  that means, for any Borel $A\subseteq X$, $\mu_1(T_{i}^{-1}(A))=\mu_i(A)$.
	
The multi-marginal optimal transport problem of Kantorovich type is to study
\begin{equation}\label{3.4}
	\inf_{ \pi\in \Pi} \int_{X^n} c(x_1,\dots,x_n) \, \d\pi (x_1,\dots,x_n),
\end{equation}
where the infimum is taken over $\Pi(\mu_1,\dots,\mu_n)$,  the union of all probability measures $\pi$ on $X^n$ whose marginals are $\mu_1,\dots,\mu_n$ respectively .
\end{definition}

	By \cite[Theorem 4.1, Lemma 4.4]{V-O},  we can see that the set $\Pi(\mu_1,\dots,\mu_n)$ is tight.  So by Prohkhorov's theorem,   the multi-marginal optimal transport plan always exists:   there exists $\theta\in\Pi(\mu_1,\dots,\mu_n)$, such that 
	$$\int_{X^n} c(x_1,\dots,x_n) \, \d\theta=\min_{ \pi\in \Pi} \int_{X^n} c(x_1,\dots,x_n) \, \d\pi.$$

	It is worth to mention that if we choose $$c(x_1,\dots,x_n)=\inf_{y\in X}\sum_{i=1}^n \frac{1}{2} \ds^2(x_i,y),$$
	there is an intimate link between multi-marginal optimal transport and Wasserstein barycenter problem. This connection was studied by Gangbo--Swiech \cite{gangbo1998optimal}, Agueh--Carlier \cite{AguehCarlier} in the Euclidean setting,  by Kim--Pass \cite{kim2015multi, KimPassAIM} in  Riemannian manifolds and  by Jiang \cite{JiangWasserstein} in Alexandrov spaces. We will  further study this connection in the next two sections.

\subsection{Curvature-dimension condition}\label{SS:CDDef}

Throughout  this subsection, $\ms$ is a metric measure space, where $(X, \ds)$ is a separable complete geodesic space  and $\mm$ is a non-negative Radon measure with full support.

For any $t\in [0,1]$,  let ${\rm e}_{t}$ denote the evaluation map: 
$$
{\rm e}_{t} : \geo(X) \to X, \qquad \gamma\mapsto  \gamma_{t}.
$$
By super-position theorem \cite[Theorem 2.10]{AG-U},  any geodesic $(\mu_t)_{t \in [0,1]}$   in the Wasserstein space   $(\mathcal{P}_2(X, \ds), W_2)$  can be lifted to a measure $\nu \in {\mathcal {P}}(\geo(X))$, 
so that $({\rm e}_t)_\sharp \, \nu = \mu_t$ for all $t \in [0,1]$. 
Given $\mu_{0},\mu_{1} \in \mathcal{P}_{2}(X, \ds)$, we denote by 
$\Opt(\mu_{0},\mu_{1})$ the space of all $\nu \in \mathcal{P}(\geo(X))$ for which $({\rm e}_0,{\rm e}_1)_\sharp\, \nu$ 
realizes the minimum in the Kantorovich problem \eqref{eq:W2def}. Such  $\nu$ will be called { a dynamical optimal plan}. Since $(X,\ds)$ is geodesic,  the set  $\Opt(\mu_{0},\mu_{1})$ is non-empty for any $\mu_0,\mu_1\in \mathcal{P}_2(X, \ds)$.

\medskip

Next we introduce the curvature-dimension condition of non-smooth  metric measure space, which was introduced independently by Lott--Villani \cite{Lott-Villani09} and Sturm \cite{S-O1, S-O2}. Recall that the relative entropy  of $\mu\in\mathcal{P}_2(X, \ds)$ with respect to $\mm$ is defined by
\begin{equation*}
{\rm Ent}_\mm(\mu):=
\left \{\begin{array}{ll}
\int \rho\ln \rho\,\d \mm ~~~&\text{if}~ \mu\ll \mm,   \mu=\rho\,\mm\\
+\infty &\text{otherwise}.
\end{array}\right.
\end{equation*}

\begin{definition}[${\rm CD}(K,\infty)$ condition]\label{def:CD infty}
	Let $K \in \R$. A metric measure space  $(X,\ds,\mm)$ verifies ${\rm CD}(K,\infty)$ condition  if, for any two $\mu_{0},\mu_{1} \in \pr_{2}(X,\ds)$,  
	there exists $\nu \in \Opt(\mu_{0},\mu_{1})$, such that for all $t\in(0,1)$:
	\begin{equation}\label{eq:defCD infty}
	{\rm Ent}_{\mm}(\mu_t)\leq(1-t){\rm Ent}_{\mm}(\mu_0)+t{\rm Ent}_{\mm}(\mu_1)-\frac{K}{2}(1-t)tW_2^2(\mu_0,\mu_1),
	\end{equation}
	where $\mu_t:=(e_t)_{\sharp}\nu$.
\end{definition}

In order to formulate  the curvature-dimension condition with a dimensional parameter,  we need the  following distortion coefficients.  
For $\kappa \in \R$, define the functions $s_\kappa, c_\kappa: [0, +\infty) \to \R$ (on $[0, \pi/ \sqrt{\kappa})$ if $\kappa >0$) as:
\begin{equation} 
s_\kappa(\theta):=\left\{\begin{array}{lll}
(1/\sqrt {\kappa}) \sin (\sqrt \kappa \theta), &\text{if}~~ \kappa>0,\\
\theta, &\text{if}~~\kappa=0,\\
(1/\sqrt {-\kappa}) \sinh (\sqrt {-\kappa} \theta), &\text{if} ~~\kappa<0
\end{array}
\right.
\end{equation}
and
\begin{equation} 
c_\kappa(\theta):=\left\{\begin{array}{lll}
\cos (\sqrt \kappa \theta), &\text{if}~~ \kappa>0,\\
1, &\text{if}~~\kappa=0,\\
\cosh (\sqrt {-\kappa} \theta), &\text{if} ~~\kappa<0.
\end{array}
\right.
\end{equation}
It can be seen that $s_\kappa$ is a  solution to the following   ordinary differential equation
\begin{equation} \label{eq:riccati}
s_\kappa''+\kappa  s_\kappa=0.
\end{equation}

For $K\in \R, N\in [1,\infty), \theta \in (0,\infty), t\in [0,1]$, set 
we define the  distortion coefficients $\sigma_{K,N}^{(t)}$ and  $\tau_{K,N}^{(t)}(\theta)$ as
\begin{equation}\label{eq:Defsigma}
\sigma_{K,N}^{(t)}(\theta):= 
\begin{cases}
\infty, & \textrm{if}\ K\theta^{2} \geq N\pi^{2}, \crcr
t & \textrm{if}\ K \theta^{2}=0,  \crcr
\displaystyle   \frac{s_{\frac K{N}}(t\theta)}{s_{\frac K{N}}(\theta)} & \textrm{otherwise}
\end{cases}
\end{equation}
and
\begin{equation}\label{eq:deftau}
\tau_{K,N}^{(t)}(\theta): = t^{1/N} \sigma_{K,N-1}^{(t)}(\theta)^{(N-1)/N}.
\end{equation}

\medskip
The R\'enyi Entropy functional ${\mathcal E}_{N} : \mathcal P_2(X, \ds) \to [0, \infty]$ is defined by
\begin{equation*}
{\mathcal E}_{N}(\mu)  :=
\left \{\begin{array}{ll}
 \int \rho^{1-1/N}\,\d \mm ~~~&\text{if}~\mu\ll \mm,   \mu=\rho\,\mm\\
+\infty &\text{otherwise}.
\end{array}\right.
\end{equation*}

\begin{definition}[$\cdkn$ condition]\label{def:CD}
	Let $K \in \R$ and $N \in [1,\infty)$.   We say that a metric measure space  $(X,\ds,\mm)$ verifies $\cdkn$ condition,   if for any two $\mu_{0},\mu_{1} \in \pr_{2}(X,\ds)$ 
	with bounded support,   there exist $\nu \in \Opt(\mu_{0},\mu_{1})$ and  $\pi:=({\rm e}_0,{\rm e}_1)_\sharp\, \nu$,  such that $\mu_{t}:=(e_{t})_{\sharp}\nu \ll \mm$ and for any $N'\geq N, t\in [0,1]$:
	\begin{equation}\label{eq:defCD}
	{\mathcal E}_{N'}(\mu_{t}) \geq \int \tau_{K,N'}^{(1-t)} (\ds(x,y)) \rho_{0}^{-1/N'} 
	+ \tau_{K,N'}^{(t)} (\ds(x,y)) \rho_{1}^{-1/N'} \,\d\pi(x, y).
	\end{equation}
\end{definition}

\brk
It is worth recalling that if $(M,g)$ is a Riemannian manifold of dimension $n$ and 
$h \in C^{2}(M)$ with $h > 0$, then the metric measure space $(M,\ds_{g},h \, {\rm Vol}_{g})$ (where $\ds_{g}$ and ${\rm Vol}_{g}$ denote the Riemannian distance and volume induced by $g$) verifies $\cdkn$ with $N\geq n$ if and only if  (see  \cite[Theorem 1.7]{S-O2})
$$
\Ric_{g,h,N} \geq  K g, \qquad \Ric_{g,h,N} : =  \Ric_{g} - (N-n) \frac{\nabla_{g}^{2} h^{\frac{1}{N-n}}}{h^{\frac{1}{N-n}}}.  
$$
In particular if $N = n$ the generalized Ricci tensor $\Ric_{g,h,N}= \Ric_{g}$ makes sense only if $h$ is constant. 
\erk

\subsubsection*{Riemannian curvature-dimension condition}
  In \cite{AGS-M}, Ambrosio, Gigli and Savar{\' e} introduced a Riemannian variant of the curvature-dimension condition, called {\bf R}iemannian {\bf C}urvature-{\bf D}imension condition,  ruling out Finsler manifolds.

Let us first recall the definition of the Cheeger energy \cite{C-D}. The Cheeger energy is a functional defined in $ L^2(X, \mm)$ by
$$
{\rm Ch}(f):=\inf\left\{\liminf_{i\rightarrow\infty}\int_X \left|\lip {f_i}\right|^2\,\d\mm:f_i\in {\rm Lip}(X, \d),\left\|f_i-f\right\|_{L^2}\rightarrow 0\right\},
$$
where
$$
\left|\lip h\right|(x):=\limsup_{y\rightarrow x}\frac{\left|h(y)-h(x)\right|}{\ds(x,y)}
$$
for $h\in \Lip(X,\ds)$.  The Sobolev space  $W^{1,2}\ms$ is defined as
$$
W^{1,2}\ms:=\left\{f\in L^2(X, \mm):  {\rm Ch}(f)<\infty\right\}.
$$
\begin{definition}[${\rm RCD}(K,\infty)$ condition]
	Let $K \in \R$.  We say that a metric measure space  $(X,\ds,\mm)$ satisfies  Riemannian curvature-dimension condition $\rcd$ (or $(X,\ds,\mm)$ is an ${\rm RCD}(K,\infty)$ space for simplicity) if it satisfies ${\rm CD}(K,\infty)$ condition and the Cheeger energy ${\rm Ch}$ is a quadratic form in the sense that
	\begin{equation}
	{\rm Ch}(f+g)+{\rm Ch}(f-g)=2{\rm Ch}(f)+2{\rm Ch}(g)\qquad \forall~f,g\in W^{1,2}\ms.
	\end{equation}
\end{definition}

Concerning the finite dimensional case, there is  a Riemannian  curvature-dimension condition $\text{RCD}(K,N)$ (cf. \cite{G-O, AMS-N, EKS-O}):

\begin{definition}[${\rm RCD}(K,N)$ condition]
		Let $K \in \R$ and $N\in (0,\infty)$.   We say that a metric measure space  $(X,\ds,\mm)$ satisfies ${\rm RCD}(K,N)$ condition,   if it satisfies ${\rm CD}(K,N)$ and the Cheeger energy ${\rm Ch}$ is a quadratic form.
\end{definition}

\begin{remark}
For any  Riemannian manifold $(M^n, g)$ and any density function  $h$,  the Sobolev space   $W^{1,2}(M,\ds_{g},h \, {\rm Vol}_{g})$ is  a Hilbert space.  So  $(M,\ds_{g},h \, {\rm Vol}_{g})$ satisfies $\rcdkn$ if and only if it is a $\cdkn$ space.
\end{remark}
\begin{remark}
A variant of the $\cdkn$ condition, called  reduced curvature dimension condition and denoted by  $\CD^{*}(K,N)$ \cite{BS-L},  asks for the same inequality \eqref{eq:defCD} of $\cdkn$ but  the
coefficients $\tau_{K,N}^{(t)}(\ds(\gamma_{0},\gamma_{1}))$ and $\tau_{K,N}^{(1-t)}(\ds(\gamma_{0},\gamma_{1}))$ 
are replaced by $\sigma_{K,N}^{(t)}(\ds(\gamma_{0},\gamma_{1}))$ and $\sigma_{K,N}^{(1-t)}(\ds(\gamma_{0},\gamma_{1}))$ respectively.
For both definitions there is a local version and it was recently proved in \cite{CavallettiEMilman-LocalToGlobal} that  on  an essentially  non-branching metric measure spaces with $\mm(X)<\infty$ (and in \cite{LZH22}  for general $\mm$), the  $\CD^{*}_{loc}(K,N)$, $\CD^{*}(K,N)$, $\CD_{loc}(K,N)$, $\cdkn$ conditions are all equivalent, for all $K\in \R, N\in (1,\infty)$, 
via the needle decomposition method.   In particular,  ${\rm RCD}^{*}(K,N)$ and ${\rm RCD}(K,N)$ are equivalent.   
\end{remark}

\medskip

\begin{example}[Notable examples of RCD spaces]
The class of ${\rm RCD}(K,N)$ spaces includes the following remarkable subclasses:
\begin{itemize}
\item Measured Gromov--Hausdorff limits of  $N$-dimensional Riemannian manifolds  with  $\mathrm {Ricci} \geq K$,  see \cite{AGS-M, GMS-C}; 
\item $N$-dimensional Alexandrov spaces with curvature bounded from below by $K$, see  \cite{ZhangZhu10, Petrunin11};
\item Cones, spherical suspensions, Warped products over {\rm RCD} space, see  \cite{K-C, K-R}.
\end{itemize}
We refer the readers to Villani's Bourbaki seminar \cite{VillaniJapan}, Ambrosio's  ICM lecture \cite{AmbrosioICM}  and Sturm's ECM lecture \cite{SturmEMS}  for  an overview of this fast-growing field and bibliography.
\end{example}

\subsection{Gradient Flows: EVI type}
We recall some basic results about gradient flow theory on an extended metric space $(X,\ds)$, which play key roles in our proof of Jensen's inequality. We refer the readers to \cite{ambrosio2005gradient, ambrosio2016optimal}   for  comprehensive discussion and references about this topic.

\begin{definition}[${\rm EVI}_{K}$ formulation of gradient flows]\label{def:EVI}
	Let $(X,\ds)$ be an extended metric space,  $K\in\mathbb{R}$, $E:X\rightarrow \mathbb{R}\cup \left\{+\infty\right\}$ be a lower semi-continuous function.   For any $y_0\in \De(E):={\left\{E<+\infty\right\}}$,  we say that $(0,\infty)\ni t\rightarrow y_t \in X$ is an ${\rm EVI}_{K}$ gradient flow  of $E$, starting from $y_0$,  if it is a locally absolutely continuous curve,  such that $y_t\overset{\ds}{\rightarrow}y_0$ as $t\rightarrow 0$ and the following inequality is satisfied: for any $z\in \De (E)$ satisfying $\ds(z, y_t)<\infty$ for some (and then all) $t\in (0, \infty)$:
	\begin{equation}
	\frac{1}{2}\frac{\d }{\d t}\ds^2(y_t,z)+\frac{K}{2}\ds^2(y_t,z)\leq	E(z)-E(y_t),\qquad \text{for a.e. } t\in(0,\infty).
	\end{equation}
	If $x_0\notin \De (E)$ we 
	ask for 
	\[
\liminf_{t\downarrow 0} F(x_t) \geq F(x_0),~~ \lim_{t\downarrow 0} \ds(x_t, y) \to \ds(x_0, y),~~ ~~\forall y\in  \overline{  {\rm D}(F)}.
 \]
\end{definition}

 The existence of ${\rm EVI}_K$ gradient flow of $K$-convex functionals has been studied in many cases, e.g. smooth Riemannian manifolds,  infinite-dimensional Hilbert spaces, ${\rm CAT}(k)$ spaces, Wasserstein spaces over Riemannian manifolds. We refer to \cite{muratori2020gradient, V-O, sturm2003probability} for more discussions.

In particular,  Ambrosio--Gigli--Savar\'e \cite{AGS-M} showed that Riemannian  curvature-dimension conditions  $\rcd$ can be characterized via ${\rm EVI}_K$ gradient flows of the relative entropy.
\begin{theorem}[\cite{AGS-M}, Theorem 5.1; \cite{AGMR-R}, Theorem 6.1]\label{rcdevi}
	Let $(X,\ds,\mm)$ be a metric measure space and $K\in\mathbb{R}$. Then $(X,\ds,\mm)$ satisfies ${\rm RCD}(K,\infty)$  if only and if  $(X,\ds,\mm)$ is a length space satisfying the exponential growth condition
	$$
	\int_X e^{-c\ds(x_0,x)^2}\,\d\mm(x)<\infty,\qquad \text{for some } x_0 \in X \text{ and } c>0,
	$$
	and for all $\mu\in\mathcal{P}_2(X, \ds)$ there exists an ${\rm EVI}_K$ gradient flow, in $(\mathcal{P}_2(X, \ds),W_2)$, of ${\rm Ent}_{\mm}$ starting from $\mu$.
\end{theorem}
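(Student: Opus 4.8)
The plan is to establish the two implications separately, treating the length-space property and the exponential growth condition as a structural background; on the $\rcd$ side the former is built into the standing geodesic assumption of $\cd$ and the latter follows from the curvature bound, while on the other side they are listed explicitly, so the real content is the equivalence between the pair ``\cd\ together with quadraticity of ${\rm Ch}$'' and the existence of the ${\rm EVI}_K$ flow of $\ent{\mm}$.

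\textbf{From $\rcd$ to the gradient flow.} Since the definition of $\cd$ already presupposes that $(X,\ds)$ is geodesic, hence a length space, and the curvature bound yields the required exponential growth, the substantive task is to produce the flow. Here the quadraticity of the Cheeger energy is decisive: it renders the $L^2$-gradient flow $(P_t)$ of ${\rm Ch}$ linear, and one identifies its dual action $(\mathcal H_t)$ on $\mathcal{P}_2(X,\ds)$ with the $W_2$-gradient flow of $\ent{\mm}$ in the energy-dissipation sense. I would then upgrade $\cd$ together with infinitesimal Hilbertianity to the Bakry--\'Emery estimate $\be$, i.e. the pointwise bound $|\nabla P_t f|^2\le e^{-2Kt}P_t(|\nabla f|^2)$, through the self-improving Bochner inequality of $\Gamma$-calculus, and invoke Kuwada's duality to convert this gradient estimate into the metric contraction $W_2(\mathcal H_t\mu,\mathcal H_t\nu)\le e^{-Kt}W_2(\mu,\nu)$.

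The final and most delicate point is an \emph{action estimate} in the spirit of Daneri--Savar\'e \cite{daneri2008eulerian}: differentiating $\tfrac12 W_2^2(\mathcal H_t\mu,\nu)$ along the flow and combining the contraction with the entropy--dissipation identity (the Fisher information equals the squared metric slope of $\ent{\mm}$) should yield, for every $\nu\in\De(\ent{\mm})$,
\[
\frac{\d^+}{\dt}\,\tfrac12 W_2^2(\mathcal H_t\mu,\nu)+\frac K2\,W_2^2(\mathcal H_t\mu,\nu)\le \ent{\mm}(\nu)-\ent{\mm}(\mathcal H_t\mu),
\]
which is exactly ${\rm EVI}_K$. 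I expect this regularize-and-differentiate argument, together with the passage from $\cd$ to the Bochner inequality, to be the main obstacle, since it uses the full strength of infinitesimal Hilbertianity and a careful control of the time derivative of the squared Wasserstein distance.

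\textbf{From the gradient flow back to $\rcd$.} For the converse I would extract the two ingredients of $\rcd$ from ${\rm EVI}_K$ alone. On one hand, the general theory of ${\rm EVI}$ flows shows that the mere existence of an ${\rm EVI}_K$ flow forces the driving functional to be $K$-geodesically convex along $W_2$-geodesics; applied to $\ent{\mm}$ this is precisely $\cd$. On the other hand, ${\rm EVI}_K$ is far stronger than convexity and encodes the Hilbertian, non-Finslerian nature of the geometry: it yields uniqueness and the $e^{-Kt}$-contraction, and through the identification of the dual heat flow with $(\mathcal H_t)$ one deduces linearity of the $L^2$-heat semigroup $(P_t)$; since linearity of $(P_t)$ is equivalent to quadraticity of ${\rm Ch}$, this gives infinitesimal Hilbertianity. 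Combining the two conclusions yields $\rcd$ and closes the equivalence. Extracting linearity from ${\rm EVI}_K$ is the second genuine difficulty, as it is exactly the step that rules out Finsler structures.
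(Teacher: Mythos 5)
First, a point of comparison: the paper itself offers no proof of this theorem --- it is quoted directly from \cite{AGS-M} (Theorem 5.1) and \cite{AGMR-R} (Theorem 6.1) --- so your proposal has to be measured against the proofs in those references. Your skeleton is sound in places: the two-implication structure, the observation that the length property and the exponential growth condition come for free from $\cd$, and the use of Daneri--Savar\'e to pass from ${\rm EVI}_K$ to geodesic $K$-convexity of $\ent{\mm}$ are all correct. But both steps that you yourself single out as the substantive ones are left without a working mechanism, and one of them is circular. In the forward direction you propose to prove $\be$ from $\cd$ plus quadraticity of ${\rm Ch}$ ``through the self-improving Bochner inequality of $\Gamma$-calculus'', then pass to $W_2$-contraction by Kuwada duality, then to ${\rm EVI}_K$. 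This inverts the logical order of the actual proof: in \cite{AGS-M} (and in \cite{EKS-O} for finite $N$) the Bakry--\'Emery gradient estimate is obtained as a \emph{consequence} of the EVI property, not as an intermediate step towards it; Bakry's self-improvement takes $\be$ as hypothesis and sharpens it, and cannot manufacture $\be$ out of displacement convexity --- that Lagrangian-to-Eulerian passage \emph{is} the hard content of the theorem. The reference proof of ($\cd$ + ${\rm Ch}$ quadratic) $\Rightarrow$ ${\rm EVI}_K$ is instead a direct action-estimate argument in the spirit of \cite{daneri2008eulerian}: one differentiates $\tfrac12 W_2^2(\mathcal H_t\mu,\nu)$ using (Hopf--Lax regularized) Kantorovich potentials, integrates by parts against the generator of the Dirichlet form, and compares with the initial derivative of $\ent{\mm}$ along the geodesic from $\mathcal H_t\mu$ to $\nu$ supplied by $\cd$; quadraticity enters precisely as bilinearity of $\Gamma$ in that integration by parts, not through a Bochner inequality. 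Moreover, your closing inference --- contraction plus the entropy-dissipation identity yields EVI --- is not a valid formal step: EVI is strictly stronger than contraction together with energy dissipation, and the implication ``$\be$ implies existence of ${\rm EVI}_K$ flows'' is itself a later, substantial theorem, not a two-line combination of those two ingredients.

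In the converse direction, ${\rm EVI}_K \Rightarrow \cd$ via Daneri--Savar\'e is fine. But your route to infinitesimal Hilbertianity --- ``through the identification of the dual heat flow with $(\mathcal H_t)$ one deduces linearity of $(P_t)$'' --- proves nothing: the identification of the $W_2$-gradient flow of $\ent{\mm}$ with the $L^2$-gradient flow of ${\rm Ch}$ holds on \emph{every} $\cd$ space, including Finsler examples where ${\rm Ch}$ is not quadratic and $W_2$-contraction fails, so the identification by itself cannot output linearity. In \cite{AGS-M} linearity is extracted from the EVI property itself (its uniqueness and contraction estimates are used to show the semigroup is additive), and only then does quadraticity of ${\rm Ch}$ follow; this is exactly the step that rules out Finsler geometries, and your proposal names it as a difficulty but supplies no argument for it. As it stands, the proposal is an outline that presupposes the two hard implications rather than a proof of either.
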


\medskip
Concerning abstract Wiener spaces, which are typical extended metric measure spaces,  Ambrosio--Erbar--Savar\'e  \cite{ambrosio2016optimal} proved the following theorem (see also an earlier paper by Fang--Shao--Sturm\cite{FSS10}).

\begin{theorem}[\cite{ambrosio2016optimal}, Theorem 11.1, \S11 and \S 13]\label{wnevi}
	Let $(X,H,\gamma)$ be an abstract Wiener space equipped with the canonical Cameron--Martin distance  $\ds_H$ and the Gaussian measure $\gamma$.  Then  for all $\mu\in\mathcal{P}(X)$ with $\mu\ll \gamma$,  there exists an ${\rm EVI}_1$ gradient flow of ${\rm Ent}_{\gamma}$ starting from $\mu$.
\end{theorem}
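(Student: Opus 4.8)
The plan is to recognize the sought-after ${\rm EVI}_1$ gradient flow as the Ornstein--Uhlenbeck heat semigroup acting at the level of densities, and to read the constant $K=1$ as the Bakry--\'Emery curvature of the Gaussian measure. I would therefore reduce the statement to three ingredients: (i) identifying the canonical Dirichlet structure on $(X,H,\gamma)$ and checking that the Cheeger energy built from $\ds_H$ is quadratic; (ii) verifying the curvature condition ${\rm BE}(1,\infty)$; and (iii) invoking the abstract equivalence between Bakry--\'Emery gradient estimates and the existence of ${\rm EVI}_K$ gradient flows of $\ent{\gamma}$, in the spirit of Theorem \ref{rcdevi} but adapted to the extended setting.

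First I would construct the Ornstein--Uhlenbeck Dirichlet form using the Malliavin $H$-derivative: set $\mathcal E(f)=\int_X |\nabla_H f|_H^2\,\d\gamma$, with generator $L$ and semigroup $P_t=e^{tL}$ given by Mehler's formula. The key structural facts are that the Cheeger energy ${\rm Ch}$ associated with the Cameron--Martin distance $\ds_H$ coincides with $\mathcal E$, and that it is a quadratic form, so that $(X,\ds_H,\gamma)$ is infinitesimally Hilbertian. Second, I would verify the pointwise commutation $\nabla_H P_t f = e^{-t}P_t\nabla_H f$, which is immediate from Mehler's formula and yields the gradient estimate
\[
|\nabla_H P_t f|_H^2 \le e^{-2t}\,P_t\big(|\nabla_H f|_H^2\big),
\]
that is, exactly ${\rm BE}(1,\infty)$.

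The third and decisive step is to turn this curvature bound into the metric ${\rm EVI}_1$ inequality of Definition \ref{def:EVI}. I would first use Kuwada-type duality to pass from the gradient estimate to the $W_2$-contraction $W_2(P_t^*\mu,P_t^*\nu)\le e^{-t}W_2(\mu,\nu)$, where $P_t^*$ is the dual semigroup acting on $\mathcal P(X)$; then identify the $L^2$ heat flow $P_t^*$ with the metric gradient flow of $\ent{\gamma}$ in $(\mathcal P_2(X,\ds_H),W_2)$, and upgrade contraction to the full ${\rm EVI}_1$ via the action-estimate and energy-dissipation argument of Ambrosio--Gigli--Savar\'e. Infinitesimal Hilbertianity from the first step is what makes this self-improvement available. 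For any initial datum $\mu\ll\gamma$ the curve $t\mapsto P_t^*\mu$ is then the desired ${\rm EVI}_1$ flow starting from $\mu$.

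The hard part will be the infinite-dimensional, genuinely extended nature of the space. Because $\ds_H(x,y)=+\infty$ whenever $x-y\notin H$, there is no local compactness, and one must fix a compatible Hausdorff topology (the Banach topology of $X$), check measurability of the relevant objects, and establish the superposition principle for geodesics in $(\mathcal P_2(X,\ds_H),W_2)$ before the transport-theoretic arguments apply. Equally delicate is upgrading the Bakry--\'Emery estimate from mere $W_2$-contraction to the sharp ${\rm EVI}_1$ inequality; this is the technical heart, and it is precisely the machinery developed for extended metric measure spaces in \cite{ambrosio2016optimal} that I would rely on.
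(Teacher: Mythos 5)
Your proposal is correct and takes essentially the same route as the proof this theorem rests on: the paper itself gives no argument but cites Ambrosio--Erbar--Savar\'e \cite{ambrosio2016optimal}, whose proof is precisely your outline --- identify the Ornstein--Uhlenbeck Dirichlet form with the (quadratic) Cheeger energy of $\ds_H$, derive ${\rm BE}(1,\infty)$ from Mehler's formula via $\nabla_H P_t f = e^{-t}P_t\nabla_H f$, and then run the extended-metric-measure-space machinery to show the dual semigroup is an ${\rm EVI}_1$ gradient flow of ${\rm Ent}_\gamma$. The one caveat is that in \cite{ambrosio2016optimal} the ${\rm EVI}_1$ inequality is obtained directly from the Bakry--\'Emery gradient estimate through action and energy-dissipation estimates (a $W_2$-contraction alone would not suffice), but you flag exactly this point and invoke the right machinery, so the approaches coincide.
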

\medskip

Concerning the finite dimensional case, similarly, metric measure spaces with Riemannian Ricci curvature bounded from below and with   dimension bounded from above can also be characterized via  ${\rm EVI}_{K, N}$ gradient flows:

\begin{definition}[${\rm EVI}_{K,N}$ formulation of gradient flows]\label{def:EVI K,N}
	Let $(X,\ds)$ be a metric space, $E:X\rightarrow \mathbb{R}\cup \left\{+\infty\right\}$ be a lower semi-continuous functional, $y_0\in \overline{\left\{E<\infty\right\}}$, and $K\in\mathbb{R}$, $N\in(0,\infty)$. We say that $(0,\infty)\ni t\rightarrow y_t \in X$ is an ${\rm EVI}_{K,N}$ gradient flow of $E$, starting from $y_0$,  if it is a locally absolutely continuous curve,  such that $y_t\overset{\ds}{\rightarrow}y_0$ as $t\rightarrow 0$ and the following inequality is satisfied for any $z\in X$
	\begin{equation}
		\frac{\d}{\dt}s^2_{K/N}\left(\frac{1}{2}\ds(y_t,z)\right)+K s^2_{K/N}\left(\frac{1}{2}\ds(y_t,z)\right)\leq	\frac{N}{2}\left(1-\frac{U_N(z)}{U_N(y_t)}\right),
	\end{equation}
	for a.e. $t\in(0,\infty)$, where $U_N(x):=e^{-E(x)/N}$.
\end{definition} 

\medskip

\begin{theorem}[\cite{EKS-O}, Theorem 3.17]\label{th:rcdknevi}
	Let $(X,\ds,\mm)$ be a metric measure space. Let $K\in\mathbb{R}$ and $N\in(0,\infty)$. Then $(X,\ds,\mm)$ is an ${\rm RCD}(K,N)$ space if only and if $(X,\ds,\mm)$ is a length space satisfying the exponential growth condition
	$$
	\int_X e^{-c\ds(x_0,x)^2}\,\d\mm(x)<\infty,\qquad \text{for some}~ x_0 \in X \text{ and } c>0,
	$$
	and for all $\mu\in\mathcal{P}_2(X, \ds)$ there exists an ${\rm EVI}_{K,N}$ gradient flow, in $(\mathcal{P}_2(X, \ds),W_2)$, of ${\rm Ent}_{\mm}$ starting from $\mu$.
\end{theorem}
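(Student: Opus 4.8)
The plan is to prove the two implications by routing through two intermediate conditions that interface naturally with the ${\rm EVI}_{K,N}$ formulation: the dimensional Bakry--\'Emery condition $\bekn$ (a weak Bochner inequality for the Cheeger energy) and the \emph{entropic} curvature-dimension condition, i.e.\ a $(K,N)$-convexity inequality for $U_N=e^{-{\rm Ent}_\mm/N}$ along $W_2$-geodesics. Both the hypothesis $\rcdkn$ and the conclusion (existence of ${\rm EVI}_{K,N}$ flows) live in the infinitesimally Hilbertian world where ${\rm Ch}$ is quadratic, so throughout one works with the heat flow $(P_t)$, which is then linear and realizes simultaneously the $L^2$-gradient flow of ${\rm Ch}$ and the $W_2$-gradient flow of ${\rm Ent}_\mm$.

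First I would treat the forward direction $\rcdkn \Rightarrow {\rm EVI}_{K,N}$. An $\rcdkn$ space is geodesic, hence a length space, and the stated exponential growth is automatic from the lower Ricci bound, as in Theorem \ref{rcdevi}. The essential analytic input is the dimensional Bochner inequality $\bekn$, available on $\rcdkn$ spaces; by Kuwada's duality it is equivalent to the dimensional Bakry--Ledoux gradient estimate
\[
|\nabla P_t f|^2 + \frac{1-e^{-2Kt}}{NK}\,(\Delta P_t f)^2 \le e^{-2Kt}\,P_t\big(|\nabla f|^2\big)
\]
for the heat semigroup (with $2t/N$ replacing the coefficient when $K=0$). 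Differentiating ${\rm Ent}_\mm$ along the heat flow and feeding this refined estimate into the action estimates for $W_2$ produces precisely the differential inequality of Definition \ref{def:EVI K,N}, carrying the sharp distortion term $s_{K/N}^2\big(\tfrac12\ds(y_t,z)\big)$.

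For the converse ${\rm EVI}_{K,N} \Rightarrow \rcdkn$ I would first extract infinitesimal Hilbertianity: the existence of ${\rm EVI}_{K,N}$ flows implies the $K$-contraction $W_2(P_t\mu,P_t\nu)\le e^{-Kt}W_2(\mu,\nu)$ of the heat flow and, more fundamentally, forces the $W_2$-gradient flow of ${\rm Ent}_\mm$ to depend linearly on its initial datum --- equivalently, ${\rm Ch}$ to be quadratic. Next, the ${\rm EVI}_{K,N}$ inequality self-improves, in the spirit of Daneri--Savar\'e, to the $(K,N)$-convexity of ${\rm Ent}_\mm$ along every $W_2$-geodesic; this is the entropic curvature-dimension condition, which upgrades to $\cdkn$. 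Here the exponential-growth hypothesis ensures that ${\rm Ent}_\mm$ is lower semicontinuous and bounded below on $W_2$-bounded sets, so the gradient-flow machinery applies. Combining $\cdkn$ with the infinitesimal Hilbertianity already obtained yields $\rcdkn$.

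The main obstacle is the forward direction, and within it the self-improvement from the plain Bochner inequality $\bekn$ to the sharp dimensional gradient estimate above, together with the action estimates converting it into the pointwise-in-time ${\rm EVI}_{K,N}$ inequality with the correct $s_{K/N}$-coefficient; tracking this dimensional factor sharply, rather than merely up to constants, is the crux. One also invokes the equivalence of the local and global $\cdkn$ conditions recalled in \S\ref{SS:CDDef} to guarantee $\bekn$ in the global form required by Kuwada's duality. These are exactly the computations of \cite{EKS-O}.
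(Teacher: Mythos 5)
This theorem is imported by the paper verbatim from \cite{EKS-O} (Theorem 3.17 there, combined with the identification of ${\rm RCD}^*(K,N)$ with ${\rm RCD}(K,N)$ discussed in the remark of \S\ref{SS:CDDef}); the paper supplies no proof of its own, so there is no internal argument to compare against. Judged on its own terms, your sketch is a faithful roadmap of the Erbar--Kuwada--Sturm machinery: the forward direction via the dimensional Bochner inequality, Kuwada-type duality and the Bakry--Ledoux gradient estimate is one of the implications actually carried out in \cite{EKS-O}, and the converse via Daneri--Savar\'e self-improvement to $(K,N)$-convexity of ${\rm Ent}_\mm$, together with the linearization argument giving quadraticity of ${\rm Ch}$, is likewise their route. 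Since you defer every substantive computation to \cite{EKS-O}, what you have written is an accurate outline rather than a proof, which is arguably the right standard for a statement the paper itself uses as a black box.

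Two seams deserve flagging. First, what the ${\rm EVI}_{K,N}$ inequality self-improves to is the \emph{entropic} condition ${\rm CD}^e(K,N)$ ($(K,N)$-convexity of the entropy along $W_2$-geodesics), whereas the theorem as stated asserts ${\rm RCD}(K,N)$, whose metric ingredient is the sharp condition ${\rm CD}(K,N)$ of Definition \ref{def:CD}. Bridging the two requires essential non-branching (which does follow from the Riemannian/${\rm EVI}$ structure, cf. \cite{RS-N}), the equivalence ${\rm CD}^e={\rm CD}^*$ on such spaces from \cite{EKS-O}, and then the globalization theorem ${\rm CD}^*={\rm CD}$ of \cite{CavallettiEMilman-LocalToGlobal} (and \cite{LZH22} for $\sigma$-finite measures), as recalled in \S\ref{SS:CDDef}; you invoke the local-to-global equivalence only in the forward direction and for a different purpose, so as written your converse lands on ${\rm CD}^e/{\rm CD}^*$ rather than on ${\rm CD}(K,N)$. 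Second, the assertion that existence of ${\rm EVI}_{K,N}$ flows ``forces the $W_2$-gradient flow to depend linearly on its initial datum, equivalently ${\rm Ch}$ to be quadratic'' is itself a nontrivial theorem, not a consequence of contraction alone; the clean way to justify it is to note that ${\rm EVI}_{K,N}$ implies ${\rm EVI}_K$ (monotonicity in the dimension parameter) and then quote Theorem \ref{rcdevi}, i.e. \cite{AGS-M}. Neither point is fatal---both are resolved in the cited literature---but they are precisely the steps a referee would ask you to make explicit.
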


\section{Wasserstein barycenter: general setting}\label{section:existence}
\subsection{Existence}

The study of the Wasserstein barycenter problem was initiated   by Agueh and Carlier \cite{AguehCarlier}, who established the existence,   uniqueness and regularity results for finitely supported measure $\Omega\in \pr_2(\pr_2(\R^n, |\cdot |), W_2)$.   Later on,  these results have been extended to more spaces including:  Kim--Pass \cite{KimPassAIM} in the Riemannian setting and Jiang \cite{JiangWasserstein} in the Alexandrov spaces. 

Concerning the existence of the Wasserstein barycenter,  we have the following existence result proved by Le Gouic and Loubes \cite{le2017existence}. Note that,  in general, the  Wasserstein space is neither locally compact nor NPC, even if the underling space is  locally compact and NPC,  so  the existence is not obvious at all (cf. Example \ref{example1}).

\begin{proposition}[\cite{le2017existence},Theorem 2]\label{proper space}
	Let $(X,\ds)$ be a separable,  locally compact,  geodesic space.  Then  any $\Omega\in \pr_2(\pr_2(X, \ds), W_2)$ has a (Wasserstein) barycenter.
\end{proposition}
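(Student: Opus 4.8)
The plan is to apply the direct method in the calculus of variations to the functional
\[
V(\nu):=\int_{\pr(X)} W_2^2(\mu,\nu)\,\d\Omega(\mu),\qquad \nu\in\pr(X),
\]
whose minimizers are exactly the Wasserstein barycenters of $\Omega$. First I would check that $m:=\inf_{\nu\in\pr(X)}V(\nu)$ is finite: since $\Omega\in\pr_2(\pr_2(X,\ds),W_2)$ has finite variance, there is $\nu_0\in\pr_2(X,\ds)$ with $\int W_2^2(\mu,\nu_0)\,\d\Omega(\mu)<\infty$, so $m\le V(\nu_0)<\infty$. I would then fix a minimizing sequence $(\nu_k)_k\subset\pr(X)$ with $V(\nu_k)\to m$ and extract a limit by a compactness–lower semicontinuity argument.

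The key step is tightness of $(\nu_k)$. Fix a base point $x_0\in X$ and recall that, in a metric space, $W_2^2(\nu,\delta_{x_0})=\int \ds^2(x,x_0)\,\d\nu(x)$ is the second moment of $\nu$. Using the triangle inequality for $W_2$ together with $(a+b)^2\le 2a^2+2b^2$ and integrating against the probability measure $\Omega$, I would obtain
\[
\int \ds^2(x,x_0)\,\d\nu_k(x)=W_2^2(\nu_k,\delta_{x_0})\le 2V(\nu_k)+2\int_{\pr(X)}W_2^2(\mu,\delta_{x_0})\,\d\Omega(\mu).
\]
The last integral is finite: by the same quadratic inequality it is bounded by $\int W_2^2(\mu,\nu_0)\,\d\Omega(\mu)$ (finite by the finite variance of $\Omega$) plus $W_2^2(\nu_0,\delta_{x_0})=\int\ds^2(x,x_0)\,\d\nu_0(x)<\infty$ (finite since $\nu_0\in\pr_2(X,\ds)$). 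Hence the second moments $\int\ds^2(\cdot,x_0)\,\d\nu_k$ are uniformly bounded by a constant $C$. By the Hopf--Rinow theorem, local compactness makes $(X,\ds)$ proper, so each closed ball $\overline{B_R(x_0)}$ is compact; Markov's inequality then yields $\nu_k\big(X\setminus \overline{B_R(x_0)}\big)\le C/R^2$ uniformly in $k$, which is precisely tightness. By Prokhorov's theorem a subsequence, still denoted $(\nu_k)$, converges weakly to some $\bar\nu\in\pr(X)$.

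It remains to pass to the limit. For each fixed $\mu$, the map $\nu\mapsto W_2^2(\mu,\nu)$ is lower semicontinuous for weak convergence (optimal plans between $\mu$ and $\nu_k$ are tight, any narrow limit is admissible for the pair $(\mu,\bar\nu)$, and the cost $\ds^2$ is lower semicontinuous), so $W_2^2(\mu,\bar\nu)\le\liminf_k W_2^2(\mu,\nu_k)$. Applying Fatou's lemma to the nonnegative integrands gives
\[
V(\bar\nu)=\int W_2^2(\mu,\bar\nu)\,\d\Omega(\mu)\le\liminf_k\int W_2^2(\mu,\nu_k)\,\d\Omega(\mu)=m.
\]
Since $m$ is the infimum we conclude $V(\bar\nu)=m<\infty$, so $\bar\nu$ is a Wasserstein barycenter of $\Omega$.

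I expect the tightness step to be the main obstacle: everything hinges on converting the boundedness of $V$ along the minimizing sequence into a uniform second-moment bound, and this is exactly where both the finite-variance hypothesis on $\Omega$ and properness (via Hopf--Rinow) are indispensable. Once properness is available, the second-moment bound upgrades to tightness and the remainder is the routine weak lower-semicontinuity of $W_2^2$. Without local compactness this compactness fails, which is precisely the difficulty the authors must circumvent later in the non-locally-compact (e.g.\ infinite-dimensional) setting.
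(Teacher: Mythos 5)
Two preliminary remarks. First, the paper offers no proof of Proposition \ref{proper space}: it is imported from Le Gouic--Loubes \cite{le2017existence}, so your proposal can only be compared with that source and with the related arguments the paper does carry out (Lemma \ref{measurable selection}, Theorem \ref{barycenter space}, Theorem \ref{th:sp}, and the approximation scheme of Theorem \ref{Existence of Wasserstein barycenter in barycenter space}). Second, your appeal to Hopf--Rinow needs \emph{completeness} of $(X,\ds)$, which the statement of the proposition omits: for a non-complete locally compact geodesic space (e.g.\ an open ball of $\R^n$) closed balls need not be compact, and your tightness step would fail. Completeness is implicit in the Polish framework of \cite{le2017existence} and in this paper's standing assumptions, but your proof should say explicitly that it is being used.

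Granting that, your proof is correct, and it follows a genuinely different, more self-contained route than the cited one. You run the direct method on $V(\nu)=\int W_2^2(\mu,\nu)\,\d\Omega(\mu)$: the triangle inequality in the metric space $(\pr_2(X,\ds),W_2)$ turns the finite-variance hypothesis into a uniform second-moment bound along a minimizing sequence, properness upgrades this to tightness via Markov's inequality, and Prokhorov together with narrow lower semicontinuity of $W_2^2(\mu,\cdot)$ and Fatou closes the argument. Le Gouic--Loubes instead prove existence first for finitely supported $\Omega$, by solving the multi-marginal Kantorovich problem with cost $c(x_1,\dots,x_n)=\inf_{y\in X}\sum_i\lambda_i\ds^2(x_i,y)$ and pushing an optimal plan forward through a measurable barycenter-selection map --- precisely the mechanism of Lemma \ref{measurable selection} and Theorem \ref{barycenter space} --- and then reach general $\Omega$ by approximation with finitely supported measures combined with their stability theorem for barycenters, properness again supplying the needed tightness. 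Your route is shorter and avoids the multi-marginal machinery entirely; theirs yields structural information that your argument does not (the barycenter realized as the image of a multi-marginal optimal plan, cf.\ Theorem \ref{th:sp}), which is what the paper exploits in its later applications, and the stability theorem it passes through is of independent interest. Both arguments hinge on properness for compactness, and, as you rightly observe, this is exactly the step that fails in the infinite-dimensional settings targeted by the paper, where tightness must instead be extracted from entropy bounds (Theorem \ref{th:existenceRCD}).
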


To the best of our knowledge, the proposition above is the best published result concerning the existence of the Wasserstein barycenter.  However,  the local compactness condition is not valid in  infinite-dimensional spaces, such as infinite-dimensional Hilbert space and general ${\rm RCD}(K,\infty)$ spaces.  By adapting the proof of  \cite[Proposition 4.2]{AguehCarlier} (or  \cite[Theorem 8]{le2017existence})
we can prove an existence theorem for \emph{barycenter extended metric spaces} (recall Definition \ref{def:bcspace}),  without  any compactness condition.

We begin with a lemma concerning the existence of measurable barycenter-selection map.

\begin{lemma}\label{measurable selection}
	Let $(X,\ds)$ be a barycenter space and $\{\lambda_{i}\}_{i=1,..., n}$ be  positive real numbers with  $\sum_{i=1}^{n} \lambda_{i}=1$.  There exists a measurable barycenter selection map $T: X^{n} \to X$, such that $T\left(x_{1}, \ldots, x_{n}\right)$ is a barycenter of  $\sum_{i=1}^{n} \lambda_{i} \delta_{x_{i}} \in \pr_0(X) \cap \pr_2(X, \ds).$
\end{lemma}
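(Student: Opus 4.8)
The plan is to exhibit the barycenters of $\sum_{i=1}^n \lambda_i \delta_{x_i}$ as the minimizers of an explicit jointly lower semicontinuous cost, and then extract a measurable selection by the theory of measurable multifunctions. Concretely, I would introduce
\[
V(x_1,\dots,x_n,y):=\sum_{i=1}^n \lambda_i\,\ds^2(x_i,y),\qquad (x_1,\dots,x_n,y)\in X^n\times X,
\]
so that, for fixed $(x_1,\dots,x_n)$, the set of barycenters of $\sum_i\lambda_i\delta_{x_i}$ is exactly $\Phi(x_1,\dots,x_n):=\argmin_{y\in X}V(x_1,\dots,x_n,y)$. First I would record the two structural facts about $V$. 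Since $\tau$ and $\ds$ are compatible, $\ds$ (hence $\ds^2$) is $\tau\times\tau$-lower semicontinuous, so $V$ is lower semicontinuous on $X^n\times X$; in particular $V$ is a normal integrand, it is jointly Borel, and each slice $V(x_1,\dots,x_n,\cdot)$ is lower semicontinuous, whence each $\Phi(x_1,\dots,x_n)$ is $\tau$-closed. Writing $m(x_1,\dots,x_n):=\inf_{y}V(x_1,\dots,x_n,y)={\rm Var}\big(\sum_i\lambda_i\delta_{x_i}\big)$, the set $A:=\{m<\infty\}$ of finite-variance configurations is where we must work, and on $A$ the value $m$ coincides with $\min_y V$ while $\Phi$ is nonempty precisely by the barycenter-space hypothesis.

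Next I would establish the measurability of the data. The function $m$ is measurable because $\{m<a\}=\mathrm{proj}_{X^n}\{V<a\}$, and $\{V<a\}$ is Borel (an $F_\sigma$ set, by lower semicontinuity of $V$); here the measurable projection theorem, valid since $(X,\tau)$ is a Souslin, in particular Polish, space, yields that $\{m<a\}$ is analytic, hence universally measurable, so $A$ is measurable and $m$ is measurable. Consequently the closed-valued multifunction $\Phi$, restricted to $A$, is measurable: its graph $\{(x_1,\dots,x_n,y): V(x_1,\dots,x_n,y)=m(x_1,\dots,x_n)\}$ is measurable in $A\times X$, and the standard measurable-selection machinery for minimizers of normal integrands, namely the Kuratowski--Ryll-Nardzewski theorem for closed-valued measurable multifunctions into a Polish space (or the von Neumann--Aumann selection theorem in the Souslin case), provides a measurable map $T_0:A\to X$ with $T_0(x_1,\dots,x_n)\in\Phi(x_1,\dots,x_n)$ for every $(x_1,\dots,x_n)\in A$. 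Finally I would extend $T_0$ to $T:X^n\to X$ by fixing an arbitrary $x_*\in X$ and setting $T:=x_*$ on $X^n\setminus A$; since $A$ is measurable, $T$ is the desired measurable barycenter-selection map.

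The main obstacle, and the reason the argument cannot follow the elementary ``pick the first approximate minimizer'' route, is the measurability of $m$ and of the argmin multifunction in the absence of local compactness and with $\ds$ merely lower semicontinuous: one cannot reduce the infimum defining $m$ to a countable operation over a dense set, nor exploit compactness of sublevel sets to make $\Phi$ compact-valued. This is exactly where the normal-integrand viewpoint and the measurable projection theorem do the work, and where the standing assumption that $(X,\tau)$ is Souslin (Polish) enters. A minor point to dispatch is that the barycenter-space property is phrased for uniform weights; for rational $\lambda_i$ it is recovered by repeating atoms, and the general case follows by the same reasoning, so nonemptiness of $\Phi$ on $A$ is not in question.
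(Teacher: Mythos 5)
Your proposal is correct and takes essentially the same route as the paper: the paper likewise defines the multivalued barycenter map (assigning a fixed point to infinite-variance configurations), asserts it has closed graph, and invokes the Kuratowski--Ryll-Nardzewski selection theorem (citing Bogachev, Theorem 6.9.3) to extract a measurable selection, exactly as you do. Your write-up simply fills in the measurability verification that the paper compresses into ``we can see that $G$ has closed graph'' --- the normal-integrand viewpoint, the measurable projection theorem, and the analyticity of $\{m<a\}$ --- and the Souslin-type hypothesis you flag is implicitly required by the paper's own citation as well.
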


\begin{proof}
	The proof is similar to the  geodesic case,   see \cite[Theorem 2.10 and Lemma 2.11]{AG-U}. Since $(X,\ds)$ is a barycenter space,   any $\sum_{i=1}^{n} \lambda_{i} \delta_{x_{i}} \in \pr_0(X) \cap \pr_2(X, \ds)$  has a barycenter.   So we can define a multivalued map  $G: X^n \to X$, which associates to each  $(x_1,\dots,x_n)\in X^n$ a set $G(x_1,\dots,x_n)$ consisting of all  barycenters of  $\sum_{i=1}^{n} \lambda_{i} \delta_{x_{i}}$,   provided $\sum_{i=1}^{n} \lambda_{i} \delta_{x_{i}}\in \pr_2(X, \ds)$; otherwise,  we define $G(x_1,\dots,x_n)=y$ for a fixed point $y\in X$. We can see that $G$    has closed graph. Then using Kuratowski--Ryll--Nardzewski measurable selection theorem \cite[Theorem 6.9.3]{bogachev2007measure}, we can find a measurable barycenter selection map $T$.
\end{proof}

Next we prove an existence  theorem concerning the Wasserstein barycenter problem in barycenter spaces. As a corollary of this theorem, we know the Wasserstein spaces over an abstract Wiener space is barycentric.

\begin{theorem}[Wasserstein space over a barycenter space is barycentric]\label{barycenter space}
	Let $(X,\ds)$ be a barycenter space, then $\mathcal W_2=(\pr(X),W_2)$ is barycentric as well.
\end{theorem}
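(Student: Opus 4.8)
The plan is to reduce the existence of a Wasserstein barycenter to the solvability of a multi-marginal optimal transport problem, and then to manufacture the barycenter by pushing an optimal multi-marginal plan through the measurable barycenter-selection map of Lemma \ref{measurable selection}, in the spirit of Agueh--Carlier. First I would reduce to the case $\Omega=\frac1n\sum_{i=1}^n\delta_{\mu_i}\in\pr_0(\pr(X))$ with finite variance; since ${\rm Var}(\Omega)<\infty$ there is a $\nu_0$ with all $W_2(\mu_i,\nu_0)<\infty$, so the $\mu_i$ lie at mutually finite $W_2$-distance, each $\mu_i$ being Radon and hence tight. I would then introduce on $X^n$ the cost $c(x_1,\dots,x_n):=\inf_{y\in X}\frac1n\sum_{i=1}^n\ds^2(x_i,y)$, whose infimum is \emph{attained}, say at $T(x_1,\dots,x_n)$, precisely because $X$ is a barycenter space; here $T$ is the measurable selection provided by Lemma \ref{measurable selection} (with weights $\lambda_i=1/n$).

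The lower bound is a gluing argument. For an arbitrary $\nu\in\pr(X)$, pick optimal couplings $\gamma_i\in\Pi(\mu_i,\nu)$ and glue them along their common second marginal $\nu$ to obtain $\gamma\in\pr(X^{n+1})$ with marginals $(\mu_1,\dots,\mu_n,\nu)$; its projection $\pi$ onto the first $n$ coordinates lies in $\Pi(\mu_1,\dots,\mu_n)$. Using $\frac1n\sum_i\ds^2(x_i,y)\geq c(x_1,\dots,x_n)$ pointwise, one gets $\frac1n\sum_i W_2^2(\mu_i,\nu)=\int\frac1n\sum_i\ds^2(x_i,y)\,\d\gamma\geq\int_{X^n}c\,\d\pi\geq C$, where $C:=\inf_{\pi\in\Pi(\mu_1,\dots,\mu_n)}\int c\,\d\pi\leq\frac1n\sum_i W_2^2(\mu_i,\nu_0)<\infty$. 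Taking the infimum over $\nu$ shows $\inf_{\nu}\frac1n\sum_i W_2^2(\mu_i,\nu)\geq C$.

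For the reverse inequality I would invoke the existence of an optimal multi-marginal plan $\pi^\ast$ attaining $C$: the set $\Pi(\mu_1,\dots,\mu_n)$ is tight by Radon-ness of the $\mu_i$, so Prokhorov together with the lower semicontinuity of $\pi\mapsto\int c\,\d\pi$ yields a minimizer, exactly as recorded after Definition \ref{def:multi-marginal optimal transport}. Setting $\bar\nu:=T_\sharp\pi^\ast$ (a Radon probability measure, since $\pi^\ast$ is Radon and $T$ is Borel), the plan $(\mathrm{proj}_i,T)_\sharp\pi^\ast$ couples $\mu_i$ with $\bar\nu$, and because $T(x_1,\dots,x_n)$ is a barycenter of $\frac1n\sum_i\delta_{x_i}$ we have $\frac1n\sum_i\ds^2(x_i,T(x_1,\dots,x_n))=c(x_1,\dots,x_n)$. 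Hence $\frac1n\sum_i W_2^2(\mu_i,\bar\nu)\leq\int\frac1n\sum_i\ds^2(x_i,T)\,\d\pi^\ast=\int c\,\d\pi^\ast=C$. Combined with the lower bound this forces equality, so $\bar\nu$ realizes the minimum and is a Wasserstein barycenter of $\Omega$; thus $\mathcal W_2$ is barycentric.

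The delicate point — and the place where the absence of local compactness is felt — is the existence of the optimal multi-marginal plan, which rests on the lower semicontinuity of $\pi\mapsto\int c\,\d\pi$ and ultimately on that of $c$ itself. Since $c$ is an infimum of the jointly (lower semi)continuous functions $\frac1n\sum_i\ds^2(x_i,\cdot)$, it is a priori only \emph{upper} semicontinuous, and establishing lower semicontinuity amounts to controlling the near-minimizers $y$ of $\sum_i\ds^2(x_i,y)$ as $(x_1,\dots,x_n)$ varies, i.e.\ to the relative compactness in $\tau$ of the sublevel sets $\{y:\sum_i\ds^2(x_i,y)\leq\lambda\}$. I expect this to be the main obstacle: for proper spaces it is automatic, while for the genuinely infinite-dimensional examples it must be extracted from the compatibility between $\tau$ and $\ds$, using that $\ds$-bounded sets are $\tau$-precompact there (for abstract Wiener spaces this is the compactness of the Cameron--Martin embedding). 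Once this is secured, verifying that $\bar\nu=T_\sharp\pi^\ast$ is a genuine Radon probability measure and that the mere measurability of $T$ causes no harm in the coupling estimates is routine.
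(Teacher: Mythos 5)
Your proposal is correct and follows essentially the same route as the paper's proof: both obtain the barycenter as the pushforward $T_\sharp\pi^\ast$ of an optimal multi-marginal plan under the measurable barycenter-selection map of Lemma \ref{measurable selection}, prove the lower bound by disintegrating and gluing couplings through an arbitrary competitor $\nu$, and prove the upper bound by coupling each $\mu_i$ to $T_\sharp\pi^\ast$ via $(\theta_i,T)_\sharp\pi^\ast$. The only difference is one of emphasis: the existence of the optimal plan $\pi^\ast$, which you flag as the delicate lower-semicontinuity issue, is simply taken for granted in the paper (it is asserted, via tightness of $\Pi(\mu_1,\dots,\mu_n)$ and Prokhorov's theorem, in the discussion following Definition \ref{def:multi-marginal optimal transport}), so your concern identifies a step the paper does not verify for this specific cost rather than a defect of your own argument.
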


\begin{proof}
By Lemma \ref{measurable selection}, there exists a measurable barycenter selection map $T: X^{n} \to X$,  such that for any $x_1,\dots,x_n\in X$ and ${\bf x}:=(x_1,\dots,x_n)$,  there is $T({\bf x})=T(x_1,\dots,x_n)\in X$ satisfying
	\begin{equation}\label{eq1:thbc1}
		c(x_1,\dots,x_n) :=\inf_{y\in X}\sum_{i=1}^n \lambda_i \ds^2(x_i,y)=\sum_{i=1}^n \lambda_i \ds^2(x_i,T({\bf x}))\in [0, +\infty].
	\end{equation}

Let $\mu_1,\dots,\mu_n\in \pr(X)$,  $\{\lambda_{i}\}_{i=1,..., n}$ be  positive real numbers with  $\sum_{i=1}^{n} \lambda_{i}=1$.  Assume  $\Omega:=\sum_{i=1}^{n} \lambda_{i} \delta_{\mu_{i}}\in \pr_2(\pr(X), W_2)$. We claim $\nu=T_{\sharp} \pi$ is a Wasserstein barycenter of $\Omega$, where $\pi\in\Pi(\mu_1,\dots,\mu_n)$ is a solution of the multi-marginal optimal transport problem  of Kantorovich type \eqref{3.4}.
	
	\paragraph*{On one hand:}  by  disintegration theorem \cite[Theorem 5.3.1]{ambrosio2005gradient}, for  any  $\eta_i\in\Pi(\mu_i,\mu)$, $i=1,\dots,n$, we can write $\eta_i=\eta_i^y\otimes\mu$ for a Borel family of probability measures ${\{\eta_i^y\}}_{y\in X}\subseteq \pr(X)$. 
	This means, for any Borel-measurable function $f:X^{2}\to [0,+\infty)$,
  \begin{equation}\label{eq:dis}
  \int_{X\times X } f\, \d\eta_i=\int_{X}\int_{X}f\,\d\eta_i^y\d\mu(y).
  \end{equation}
Denote $\eta_y=\eta_1^y\dots\eta_n^y$ and $\eta:=\eta_y\mu\in \pr(X^{n+1})$. It can be seen that $\eta\in\Pi(\mu_1,\dots,\mu_n,\mu) $ and  for any Borel-measurable function $f:X^{n+1}\to [0,+\infty)$, we have
	\begin{equation}\label{eq2:thbc1}
		\int_{X^{n+1} } f(x_1,...,x_n, y)\, \d\eta(x_1,...,x_n, y)=\int_{X}\int_{X^n}f(x_1,...,x_n, y)\,\d\eta_y(x_1,...,x_n)\d\mu(y).
	\end{equation}
	
For any $\mu\in \pr(X)$ with
	\[
	\sum_{i=1}^n \lambda_i W_2^2(\mu_i,\mu)<+\infty,
	\]
there are  optimal transport plans $\{\eta_i\}_{i=1,2,...n}$,   so that
	\begin{equation}\label{5.5}
		\begin{aligned} 
		&\sum_{i=1}^n \lambda_i W_2^2(\mu_i,\mu)\\
			&=\sum_{i=1}^n \lambda_i \int_{X\times X} \ds^2(x_i,y) \, \d\eta_i
			\overset{\eqref{eq:dis}}=\int_{X^{n+1}}\sum_{i=1}^n \lambda_i \ds^2(x_i,y) \, \d\eta \\
			&\overset{\eqref{eq1:thbc1}}\geq \int_{X^{n+1}}\sum_{i=1}^n \lambda_i \ds^2(x_i,T({\bf x})) \, \d\eta\\
			&\overset{\eqref{eq2:thbc1}}=\int_{X}\int_{X^n}\sum_{i=1}^n \lambda_i \ds^2(x_i,T({\bf x}))\,\d\eta_y({\bf x})\,\d\mu(y)\\
			&\overset{\text {Fubini}}=\int_{X^n}\sum_{i=1}^n \lambda_i \ds^2(x_i,T({\bf x}))\,\d\tilde{\eta}\\
			&\geq \int_{X^n}\sum_{i=1}^n \lambda_i \ds^2(x_i,T({\bf x}))\,\d\pi\overset{\text{(4.1)}}=\int_{X^n}c(x_1,\dots,x_n)\,\d\pi,
		\end{aligned}
	\end{equation}
	where we  use the facts $\tilde{\eta}:=\int_X \eta_y\,\d\mu(y)\in \Pi(\mu_1,\dots,\mu_n) $ in the last inequality. 
In particular,   as $\sum \lambda_i \delta_{\mu_{i}}$ has finite variance,  there is $\mu\in \pr(X)$ so that $\sum_{i=1}^n \lambda_i W_2^2(\mu_i,\mu)<+\infty$,  we have
\begin{equation}\label{eq:mulfi}
\int_{X^n}c(x_1,\dots,x_n)\,\d\pi<+\infty.
\end{equation}
	
\paragraph*{On the other hand:}  for any $i=1,\dots,n$, denote by  $\theta_i$ the $i$-th canonical projection from $X^n$ to $X$, and denote $\eta_i=(\theta_i,T)_{\sharp}\pi$. By construction, $\eta_i\in\Pi(\mu_i,\nu)$.
	Then by  definition of  $L^{2}$-Kantorovich--Wasserstein distance, we have
	\begin{equation}
		W_2^2(\mu_i,\nu)\leq  \int_{X\times X} \ds^2(x_i,y) \, \d\eta_i
		=\int_{X^n} \ds^2(x_i,T({\bf x})) \, \d\pi.
	\end{equation}
	Combining with \eqref{eq1:thbc1} and \eqref{eq:mulfi} we get
	\begin{equation}\label{5.3}
		\begin{aligned}
			\sum_{i=1}^n \lambda_i W_2^2(\mu_i,\nu)\leq \sum_{i=1}^n \lambda_i \int_{X^n} \ds^2(x_i,T({\bf x})) \, \d\pi=\int_{X^n}c(x_1,\dots,x_n)\, \d\pi<+\infty. 		
		\end{aligned}
	\end{equation}
	
Combining  \eqref{5.5} and \eqref{5.3}, we  get
	\begin{equation}
		\sum_{i=1}^n \lambda_i W_2^2(\mu_i,\nu)=\inf_{\mu}\sum_{i=1}^n \lambda_i W_2^2(\mu_i,\mu)=\int_{X^n}\sum_{i=1}^n \lambda_i \ds^2(x_i,T({\mathbf x}))\,\d\pi.
	\end{equation}
	
	Therefore, ${\nu}=T_{\sharp} \pi$ is a Wasserstein barycenter of $\Omega$ and we prove the claim.
	\end{proof}
	
\bigskip

 Furthermore,  we have the following super-position theorem about Wasserstein barycenters.  We refer the readers to \cite[Theorem 2.10]{AG-U} for a well-known super-position theorem concerning Wasserstein geodesics.
	
	\begin{theorem}[Super-position theorem]\label{th:sp}
	Let $(X, \ds)$ be an extended metric space. Let $\mu_1,\dots,\mu_n\in \pr(X)$,  $\{\lambda_{i}\}_{i=1,..., n}$ be  positive real numbers with  $\sum_{i=1}^{n} \lambda_{i}=1$.   Assume    $\Omega:=\sum_{i=1}^{n} \lambda_{i} \delta_{\mu_{i}}$ has bounded variance and has a barycenter $\mu \in \pr(X)$.  
	
	Then  there is $\eta \in \Pi(\mu_0,\dots, \mu_n, \mu)$ so that  for $\eta$-a.e. $(x_1, \dots, x_n, y)\in X^{n+1}$,   $y$ is a barycenter of $\sum_i \delta_{x_i}$.
	\end{theorem}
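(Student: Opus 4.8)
The plan is to reconstruct, from the given barycenter $\mu$, a single coupling of all the marginals by gluing together the optimal plans between each $\mu_i$ and $\mu$ — exactly the disintegration construction used in the proof of Theorem \ref{barycenter space} — and then to use the minimality of $\mu$ to force this coupling to concentrate on fiberwise barycenters. First I would choose, for each $i$, an optimal plan $\eta_i \in \Pi(\mu_i,\mu)$ (these exist and have finite cost since $\mu$ is a barycenter and ${\rm Var}(\Omega)<\infty$), disintegrate $\eta_i=\eta_i^y\otimes\mu$ with respect to the second marginal, and set $\eta:=\big(\bigotimes_{i=1}^n \eta_i^y\big)\otimes\mu\in\pr(X^{n+1})$. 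Checking marginals shows $\eta\in\Pi(\mu_1,\dots,\mu_n,\mu)$ and that the projection of $\eta$ onto the $(x_i,y)$-coordinates returns $\eta_i$; consequently, by optimality of each $\eta_i$,
\[
\int_{X^{n+1}} \sum_{i=1}^n \lambda_i \ds^2(x_i,y)\,\d\eta = \sum_{i=1}^n \lambda_i W_2^2(\mu_i,\mu) = {\rm Var}(\Omega).
\]

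Write $c(x_1,\dots,x_n)=\inf_{z}\sum_{i=1}^n \lambda_i \ds^2(x_i,z)$ and let $\tilde\eta\in\Pi(\mu_1,\dots,\mu_n)$ be the marginal of $\eta$ on $X^n$. The pointwise bound $c({\bf x})\leq \sum_i \lambda_i \ds^2(x_i,y)$ immediately gives $\int_{X^n} c\,\d\tilde\eta \leq {\rm Var}(\Omega)$, so in particular $c<\infty$ $\tilde\eta$-a.e. The heart of the matter is the reverse inequality $\int_{X^n} c\,\d\tilde\eta \geq {\rm Var}(\Omega)$. To prove it I would fix $\epsilon>0$ and select, on the set $\{c<\infty\}$, a measurable $\epsilon$-approximate barycenter map $T_\epsilon:X^n\to X$ with $\sum_i\lambda_i\ds^2(x_i,T_\epsilon({\bf x}))\leq c({\bf x})+\epsilon$; the associated multifunction has nonempty values by definition of the infimum, so a measurable selection exists via the Kuratowski--Ryll-Nardzewski theorem exactly as in Lemma \ref{measurable selection} (note that, unlike there, no barycenter-space hypothesis is needed, since only approximate minimizers are required). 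Then $\nu_\epsilon:=(T_\epsilon)_\sharp\tilde\eta$ is a competitor for the barycenter problem with $\sum_i\lambda_i W_2^2(\mu_i,\nu_\epsilon)\leq \int_{X^n}c\,\d\tilde\eta+\epsilon$, and the minimality of $\mu$ yields ${\rm Var}(\Omega)\leq \int_{X^n}c\,\d\tilde\eta+\epsilon$; letting $\epsilon\to0$ closes the gap.

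Combining the two inequalities with the cost identity above,
\[
\int_{X^{n+1}} \sum_{i=1}^n \lambda_i \ds^2(x_i,y)\,\d\eta = {\rm Var}(\Omega) = \int_{X^n} c\,\d\tilde\eta = \int_{X^{n+1}} c({\bf x})\,\d\eta,
\]
so the nonnegative integrand $\sum_i\lambda_i\ds^2(x_i,y)-c({\bf x})$ has vanishing $\eta$-integral and hence vanishes $\eta$-a.e. That is, for $\eta$-a.e. $(x_1,\dots,x_n,y)$ one has $\sum_i\lambda_i\ds^2(x_i,y)=\inf_z\sum_i\lambda_i\ds^2(x_i,z)$, which is precisely the assertion that $y$ is a barycenter of $\sum_{i=1}^n\lambda_i\delta_{x_i}$.

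I expect the main obstacle to be the measurable-selection step underlying the reverse inequality: one must check that $c$ is Borel (an infimum over $z$ of the maps $z\mapsto\sum_i\lambda_i\ds^2(x_i,z)$) and that the approximate-barycenter multifunction admits a measurable, or at least universally measurable, selection in the extended-metric setting where local compactness and separability are not assumed. This is handled by restricting to a separable full-measure subset, on which $\tilde\eta$ is concentrated since it is Radon, so that the selection theorems apply and $\nu_\epsilon=(T_\epsilon)_\sharp\tilde\eta$ is a genuine Radon competitor. A secondary, purely technical point is the disintegration in the gluing step, which is licensed exactly as in the proof of Theorem \ref{barycenter space}.
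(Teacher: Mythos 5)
Your proof is correct and follows essentially the same route as the paper's: both glue the optimal plans $\eta_i\in\Pi(\mu_i,\mu)$ via disintegration to build $\eta\in\Pi(\mu_1,\dots,\mu_n,\mu)$, both use an $\epsilon$-approximate measurable barycenter selection $T_\epsilon$ (pushing the $X^n$-marginal $\tilde\eta$ forward to a competitor $\nu_\epsilon$) to get the reverse inequality ${\rm Var}(\Omega)\leq\int c\,\d\tilde\eta$, and both conclude by noting that the nonnegative integrand $\sum_i\lambda_i\ds^2(x_i,y)-c(\mathbf{x})$ has vanishing $\eta$-integral and hence vanishes $\eta$-a.e. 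Your extra care about Borel measurability of $c$ and restriction to a separable full-measure set is a refinement of, not a departure from, the paper's argument.
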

\begin{proof}
First of all,  we claim:
	\begin{equation}\label{eq:mmg1}
		\inf_{\nu\in \pr(X)}\sum_{i=1}^n \lambda_i W_2^2(\mu_i,\nu)\leq \inf_{\pi\in\Pi}\int_{X^n}c(x_1,\dots,x_n)\,\d\pi.
	\end{equation}
	
	Given $\epsilon>0$, similar to Lemma \ref{measurable selection},  by measurable  selection theorem we can find a measurable map $T_\epsilon:X^n\rightarrow X$, such that  for any ${\bf x}:=(x_1,\dots,x_n)$, it holds
	$$c({\bf x}):=\inf_{y\in X}\sum_{i=1}^n \lambda_i \ds^2(x_i,y)>\sum_{i=1}^n \lambda_i\ds^2(x_i,T_\epsilon({\mathbf x}))-\epsilon.$$	
Similar to the proof of Theorem \ref{barycenter space},  for  the $i$-th canonical projection  $\theta_i$ from $X^n$ to $X$, and any $\pi\in\Pi(\mu_1,\dots,\mu_n)$,  we   set $\nu_\epsilon:=(T_\epsilon)_\sharp \pi$ and  $\eta_i:=(\theta_i,T_\epsilon)_{\sharp}\pi\in\Pi(\mu_i,\nu_\epsilon)$.  Then
	$$
W_2^2(\mu_i,\nu_\epsilon)\leq  \int_{X\times X} \ds^2(x_i,y) \, \d\eta_i = \int_{X^n} \ds^2(x_i,T_\epsilon({\mathbf x})) \, \d\pi.
	$$
	Thus,
	\begin{equation}
	\inf_{\nu\in \pr(X)}\sum_{i=1}^n 	\lambda_i W_2^2(\mu_i,\nu)\leq \sum_{i=1}^n \int_{X^n} \lambda_i\d^2(x_i,T_\epsilon({\mathbf x})) \, \d\pi<\int_{X^n}c({\mathbf x})\, \d\pi+\epsilon.
	\end{equation}
Letting $\epsilon \to 0$, we prove the claim.

Let $\mu$ be a barycenter of $\Omega$.  By definition we have ${\rm Var}(\Omega)=\sum_{i=1}^n \lambda_i W_2^2(\mu_i,\mu)<\infty$.  Similar to  \eqref{5.5} in  the proof of Theorem \ref{barycenter space},  we can find two couplings $\tilde \eta \in \Pi(\mu_1,\dots,\mu_n)$ and $\eta \in  \Pi(\mu_1,\dots,\mu_n, \mu)$ such that 
\begin{eqnarray*}
\int_{X^n}c({\mathbf x})\, \d\tilde\eta&=&\int_{X^{n+1}}\inf_{y\in X}\sum_{i=1}^n \lambda_i \ds^2(x_i,y)\,\d \eta \leq \int_{X^{n+1}}\sum_{i=1}^n \lambda_i \ds^2(x_i,y) \, \d\eta\\&=& \sum_{i=1}^n \lambda_i W_2^2(\mu_i,\mu)=\inf_{\nu\in \pr(X)}\sum_{i=1}^n \lambda_i W_2^2(\mu_i,\nu).
\end{eqnarray*}
Combining with \eqref{eq:mmg1}, we know 
\[
\int_{X^{n+1}}\inf_{y\in X}\sum_{i=1}^n \lambda_i \ds^2(x_i,y)\,\d \eta= \int_{X^{n+1}}\sum_{i=1}^n \lambda_i \ds^2(x_i,y) \, \d\eta.
\]
In particular, for $\eta$-a.e. $(x_1,\dots, x_n, y)\in X^{n+1}$,  $y$ is a barycenter of $\sum_i \delta_{x_i}$.
\end{proof}

	\subsection{Uniqueness}\label{sec:uniq}
	
 By \cite[Theorem 3.1]{KimPassAIM} and \cite[Lemma 3.2.1]{pass2013optimal}, the uniqueness of Wasserstein barycenter can be deduced from the strict convexity of $L^{2}$-Kantorovich-Wasserstein distance with respect to the linear interpolation.  Note that the strict convexity can be proved using the existence of optimal transport maps. Thus the uniqueness is true for more general spaces, such as non-branching ${\rm CD}(K,N)$ spaces \cite[Theorem 3.3]{Gigli12a}, essentially non-branching ${\rm MCP}(K,N)$ spaces \cite[Theorem 1.1]{cavalletti2017optimal} and  abstract Wiener spaces \cite[Theorem 6.1]{FU}.   
		In this section,  to make our paper complete and self-contained,  we give a sketched proof to the uniqueness of the Wasserstein barycenter,  under the existence of optimal transport map. 
	
	Given an extended metric measure space $\ms$.  For $\Omega\in \pr_2(\mathcal W_2)$, we always assume  that the set of $\mm$-absolutely continuous probability measures $\pr_{ac}\ms \subset \pr(X)$ is   $\Omega$ measurable.

	\begin{proposition}\label{uniqueness in finite space}
	Let $(X,\ds,\mm)$ be an extended  metric measure space, such that the optimal transport problem of Monge type is solvable  if one of the marginal measures is in  $\pr_{ac}\ms$ (in this case we call $\ms$ a Monge Space, see  E.  Milman's paper \cite[\S 3.3]{milman2020quasi} for more discussions). Let $\Omega\in \pr_2(\mathcal W_2)$ be such that $\Omega(\pr_{ac}\ms)>0$, then the   barycenter of $\Omega$ is unique.
\end{proposition}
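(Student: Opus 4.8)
The plan is to deduce uniqueness from the \emph{strict convexity} of the map $\nu\mapsto W_2^2(\mu,\nu)$ along \emph{linear} interpolations of measures, which holds whenever $\mu\in\pr_{ac}\ms$. Suppose, for contradiction, that $\Omega$ admits two distinct barycenters $\bar\mu_0\neq\bar\mu_1$, and denote by
$$
m:=\int_{\pr(X)}W_2^2(\mu,\bar\mu_0)\,\d\Omega(\mu)=\int_{\pr(X)}W_2^2(\mu,\bar\mu_1)\,\d\Omega(\mu)=\min_{\nu\in\pr(X)}\int_{\pr(X)}W_2^2(\mu,\nu)\,\d\Omega(\mu)
$$
the common minimal value. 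For $s\in[0,1]$ I would set $\bar\mu_s:=(1-s)\bar\mu_0+s\bar\mu_1\in\pr(X)$; since this is a legitimate competitor, it suffices to produce an $s\in(0,1)$ with $\int_{\pr(X)}W_2^2(\mu,\bar\mu_s)\,\d\Omega(\mu)<m$.

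First I would record a convexity estimate. Fix $\mu\in\pr(X)$ with $W_2(\mu,\bar\mu_0),W_2(\mu,\bar\mu_1)<\infty$ and choose optimal plans $\pi_0\in\Pi(\mu,\bar\mu_0)$ and $\pi_1\in\Pi(\mu,\bar\mu_1)$. The convex combination $\pi_s:=(1-s)\pi_0+s\pi_1$ has first marginal $\mu$ and second marginal $\bar\mu_s$, hence lies in $\Pi(\mu,\bar\mu_s)$ and yields
\begin{equation}\label{eq:convex-uniq}
W_2^2(\mu,\bar\mu_s)\leq(1-s)\,W_2^2(\mu,\bar\mu_0)+s\,W_2^2(\mu,\bar\mu_1).
\end{equation}

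The core step is to upgrade \eqref{eq:convex-uniq} to a \emph{strict} inequality when $\mu\in\pr_{ac}\ms$. Since $\ms$ is a Monge space, optimal transport out of the absolutely continuous measure $\mu$ is realised by maps, so $\pi_0=(\Id,T_0)_\sharp\mu$ and $\pi_1=(\Id,T_1)_\sharp\mu$, and moreover the optimal plan from $\mu$ to any target is the \emph{unique} optimiser. If equality held in \eqref{eq:convex-uniq} for some $s\in(0,1)$, then $\pi_s$ would itself be optimal in $\Pi(\mu,\bar\mu_s)$; by uniqueness it would be induced by a single map, i.e.\ its disintegration $\{\pi_{s,x}\}_x$ over $\mu$ would be a Dirac mass for $\mu$-a.e.\ $x$. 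But $\pi_{s,x}=(1-s)\delta_{T_0(x)}+s\delta_{T_1(x)}$, which is a Dirac mass precisely when $T_0(x)=T_1(x)$. Thus $T_0=T_1$ $\mu$-a.e., forcing $\bar\mu_0=(T_0)_\sharp\mu=(T_1)_\sharp\mu=\bar\mu_1$, a contradiction. Hence, under $\bar\mu_0\neq\bar\mu_1$, the inequality \eqref{eq:convex-uniq} is strict for every $\mu\in\pr_{ac}\ms$ and every $s\in(0,1)$.

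Finally I would integrate. Because $\bar\mu_0,\bar\mu_1$ are barycenters of finite cost and $\Omega$ has finite variance, $W_2(\mu,\bar\mu_0)$ and $W_2(\mu,\bar\mu_1)$ are finite for $\Omega$-a.e.\ $\mu$, so \eqref{eq:convex-uniq} holds $\Omega$-a.e.\ and its strict version holds on $\pr_{ac}\ms$ up to an $\Omega$-null set. Integrating against $\Omega$, splitting the domain into $\pr_{ac}\ms$ and its complement, and using both the standing measurability of $\pr_{ac}\ms$ and the hypothesis $\Omega(\pr_{ac}\ms)>0$, I obtain $\int_{\pr(X)}W_2^2(\mu,\bar\mu_s)\,\d\Omega(\mu)<(1-s)m+sm=m$ for each $s\in(0,1)$, contradicting the minimality of $m$; therefore $\bar\mu_0=\bar\mu_1$. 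I expect the main obstacle to be the strict-convexity step, which relies crucially on the Monge property to guarantee that optimal plans out of an absolutely continuous measure are unique and concentrated on a graph (the point where \cite{Gigli12a, cavalletti2017optimal, FU} enter); the remaining measure-theoretic bookkeeping—disintegration and $\Omega$-a.e.\ finiteness—is routine.
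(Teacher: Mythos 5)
Your proof is correct and follows essentially the same route as the paper: convexity of $\nu\mapsto W_2^2(\mu,\nu)$ under linear interpolation, upgraded to strict convexity for $\mu\in\pr_{ac}\ms$ via the Monge property (a mixture of two graph-induced optimal plans that is itself optimal must be graph-induced, forcing the maps to coincide), then integration against $\Omega$ using $\Omega(\pr_{ac}\ms)>0$. Your write-up merely spells out in more detail the disintegration step and the a.e.\ finiteness bookkeeping that the paper's sketch leaves implicit.
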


\begin{proof}
 From the definition of $L^{2}$-Kantorovich--Wasserstein distance, we can see that $\pr(X)\ni \nu \mapsto W_2^2(\mu, \nu)$ is convex (w.r.t. linear interpolation) for any $\mu\in \pr(X)$,  i.e.
		\begin{equation}\label{5.18}
		W_2^2(\mu, \lambda\nu_1+(1-\lambda)\nu_2)\leq \lambda W_2^2(\mu, \nu_1)+(1-\lambda)W_2^2(\mu, \nu_2).
	\end{equation}
	We claim that  this inequality  is strict if $\mu\in \pr_{ac}\ms$.
	
	In order to prove this claim, it is sufficiently to show that for any $\nu_1,\nu_2\in \pr(X)$, $\nu_1\neq \nu_2$, $0<\lambda<1$, it holds 
\[ W_2^2(\mu, \lambda\nu_1+(1-\lambda)\nu_2)< \lambda W_2^2(\mu, \nu_1)+(1-\lambda)W_2^2(\mu, \nu_2)
	\]
	for any  $\mu\in \pr_{ac}\ms$ with $W_2^2(\mu, \nu_1), W_2^2(\mu, \nu_2)<\infty$.

	Indeed, if \eqref{5.18} is an equality for some $0<\lambda<1$, denote by $\nu=\lambda\nu_1+(1-\lambda)\nu_2\in\pr(X)$. Since $\mu\in \pr_{ac}\ms$, by assumption, there exist  (unique) optimal transport maps $T_1$ and $T_2$, push forward $\mu$ to $\nu_1,\nu_2$ respectively.  Let $\gamma=\lambda T_1+(1-\lambda)T_2$, and note that \eqref{5.18} is an equality implies $\gamma$ is an optimal plan between $\mu$ and $\nu$, then $\gamma$ must induced by a map $T$, this contradicts with $\nu_1\neq \nu_2$. 
	
	Therefore, integrating $\nu\to W_2^2(\mu, \nu)$ with respect to $\Omega$ yields the strict convexity of the functional $\nu\to \int_{\mathcal W_2}W_2^2(\mu, \nu)\,\d\Omega(\mu)$ under the assumption $\Omega(\pr_{ac}\ms)>0$. This then implies  the uniqueness of its minimizer, the Wasserstein barycenter of $\Omega$.
\end{proof}

\section{Functionals on the Wasserstein space}\label{main}
\subsection{${\rm EVI}_K$ gradient flows}
\subsubsection{EVI implies Jensen's inequality}
Motivated  by a result of Daneri--Savar\'e \cite[Theorem 3.2]{daneri2008eulerian}, which tells us that a functional $E$ is $K$-convex along \emph{any} geodesic contained in $\overline {\left\{E<\infty \right\}}$ if $E$ has an ${\rm EVI}_K$ gradient flow from any starting point in $\overline {\left\{E<\infty \right\}}$,  we realize that the existence of ${\rm EVI}_K$ gradient flows of $E$ is closely related to  Jensen's inequality .

\medskip

The following proposition,   proved in \cite[Proposition 3.1]{daneri2008eulerian} (see also \cite{ambrosio2016optimal}),  provides an integral characterization of ${\rm EVI}_{K}$  gradient flows. We will use this formulation to prove Jensen's inequality.

\begin{proposition}[Integral version of $\rm EVI$]\label{integral version}
	Let $E$, $K$ and $(y_t)_{t>0}$ be as in Definition \ref{def:EVI}, then $(y_t)_{t>0}$ is an ${\rm EVI}_{K}$ gradient flow if and only if it satisfies 
	\begin{equation}\label{eq:integral version}
	\frac{e^{K(t-s)}}{2}\ds^2(y_t,z)-\frac{1}{2}\ds^2(y_s,z)\leq I_K(t-s)\big(E(z)-E(y_t)\big),\quad \forall  0\leq s\leq t,
	\end{equation}
	where $I_K(t)$:=$\int_{0}^{t}e^{Kr}\,\d r$.
\end{proposition}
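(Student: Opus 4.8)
The plan is to prove the two implications separately; the crux is to pass between the pointwise (a.e.) differential inequality of Definition \ref{def:EVI} and the global integral inequality \eqref{eq:integral version}.

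For the implication ``differential $\Rightarrow$ integral'', fix $z\in\De(E)$ with $\ds(z,y_t)<\infty$ and set $D(r):=\tfrac12\ds^2(y_r,z)$. Since $(y_r)$ is locally absolutely continuous, so is $r\mapsto D(r)$, and the ${\rm EVI}_K$ inequality may be rewritten in the equivalent integrating-factor form
$$\ddt\left(e^{Kr}D(r)\right)\le e^{Kr}\big(E(z)-E(y_r)\big)\qquad\text{for a.e. }r.$$
Integrating this absolutely continuous function over $[s,t]$ gives
$$e^{Kt}D(t)-e^{Ks}D(s)\le\int_s^t e^{Kr}\big(E(z)-E(y_r)\big)\,\d r.$$
The extra ingredient needed is that $r\mapsto E(y_r)$ is non-increasing along the flow, so that $E(y_r)\ge E(y_t)$ for $r\le t$; hence the right-hand side is bounded above by $(E(z)-E(y_t))\int_s^t e^{Kr}\,\d r=(E(z)-E(y_t))\,e^{Ks}I_K(t-s)$, using $\int_s^t e^{Kr}\,\d r=e^{Ks}I_K(t-s)$. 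Dividing by $e^{Ks}$ yields exactly \eqref{eq:integral version}. The boundary case $s=0$ then follows by letting $s\downarrow0$ and invoking the continuity $\ds(y_s,z)\to\ds(y_0,z)$ together with the lower semicontinuity of $E$.

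For the converse ``integral $\Rightarrow$ differential'', fix $z$ and a time $t$ at which $r\mapsto\ds^2(y_r,z)$ is differentiable (which holds for a.e. $t$, again by local absolute continuity of the curve). For $s<t$ one writes the left-hand side of \eqref{eq:integral version} as $\tfrac12\big[\ds^2(y_t,z)(e^{K(t-s)}-1)+(\ds^2(y_t,z)-\ds^2(y_s,z))\big]$, divides the whole inequality by $(t-s)>0$, and lets $s\uparrow t$. Using $\tfrac{e^{K(t-s)}-1}{t-s}\to K$, $\tfrac{\ds^2(y_t,z)-\ds^2(y_s,z)}{t-s}\to\ddt\ds^2(y_t,z)$ and $\tfrac{I_K(t-s)}{t-s}\to I_K'(0)=1$, one recovers $\tfrac12\ddt\ds^2(y_t,z)+\tfrac{K}{2}\ds^2(y_t,z)\le E(z)-E(y_t)$, which is the differential ${\rm EVI}_K$ inequality.

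I expect the main obstacle to be the energy-monotonicity step used in the forward direction: this is the only place where a genuine property of the flow, beyond local absolute continuity and the a.e. inequality, is required. I would derive it from the ${\rm EVI}_K$ inequality itself, by testing it with comparison points $z=y_\sigma$ taken along the flow, integrating, and combining the lower semicontinuity of $E$ with a Lebesgue-differentiation argument to control $r\mapsto E(y_r)$ (which turns out to be locally Lipschitz on $(0,\infty)$). One must also use the regularizing effect of the flow, namely $y_r\in\De(E)$ for every $r>0$, so that $E(y_r)$ and $E(y_t)$ are finite and all the manipulations above are legitimate.
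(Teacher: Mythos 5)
Your proof is correct and takes essentially the same route as the proof the paper relies on: the paper does not prove Proposition \ref{integral version} itself but cites \cite[Proposition 3.1]{daneri2008eulerian}, and your two implications (integrating factor $e^{Kr}$ plus monotonicity of $E$ along the flow for ``differential $\Rightarrow$ integral''; difference quotients at differentiability points, with $I_K(t-s)/(t-s)\to 1$, for the converse) are exactly that argument. You also correctly isolate the only delicate ingredient --- that $r\mapsto E(y_r)$ is non-increasing along an ${\rm EVI}_K$ flow, which requires the a.e.-finiteness of $E(y_r)$ so that points of the flow may be used as comparison points --- and your sketched derivation (test the EVI with $z=y_\sigma$, integrate, then upgrade from an averaged inequality to pointwise monotonicity via lower semicontinuity and Lebesgue points) is the standard way this is established in the literature.
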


\begin{theorem}[$\rm EVI$ implies Jensen's inequality]\label{EVI JI}
	Let $(X,\ds)$ be an extended metric space,  $K\in \R$,  $E$ be as in Definition \ref{def:EVI} and $\mu$ be a  probability measure  over $X$ with finite variance.   Let $\epsilon\geq 0.$ Assume there  is an ${\rm EVI}_K$ gradient flow  $(y_t)_{t>0}$ of $E$  starting from $y\in X$,   such that
	\begin{equation}\label{nearly barycenter}
		\int_X \ds^2(y,z)\,\d\mu(z)\leq {\rm Var}(\mu)+\epsilon.
	\end{equation}
	Then the following inequality holds:
	
	\begin{equation}\label{NearlyJI}
	E(y_t)\leq \int_{X} E(z)\,\d\mu(z)-\frac{K}{2}{\rm Var}(\mu)+\frac{\epsilon}{2I_K (t)}.
	\end{equation}
	
In particular, if   $\mu$ has a barycenter $\bar{y}$ and there is  an ${\rm EVI}_K$ gradient flow of $E$ starting from $\bar y$, then  we have Jensen's inequality:
	\begin{equation}\label{JI}
	E(\bar{y})\leq \int_{X} E(z)\,\d\mu(z)-\frac{K}{2}\int_{X}\ds^2(\bar{y},z)\,\d\mu(z).
	\end{equation}
\end{theorem}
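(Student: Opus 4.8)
The plan is to run the integral (exponential) formulation of the gradient flow from the starting point $y$ and then average it against $\mu$. First I would record that every quantity in sight is finite and lives in a single equivalence class of finite distance: by \eqref{nearly barycenter} we have $\int_X\ds^2(y,z)\,\d\mu(z)<\infty$, so $\mu$ is concentrated on $\{z:\ds(y,z)<\infty\}$, and since $(y_t)$ is a locally absolutely continuous curve emanating from $y$ we also have $\ds(y_t,y)<\infty$; hence $\ds(y_t,z)<\infty$ for $\mu$-a.e.\ $z$ and the EVI inequality is applicable $\mu$-almost everywhere. Applying Proposition \ref{integral version} with $s=0$ (so that $y_s=y$) gives, for $\mu$-a.e.\ $z$,
\[
\frac{e^{Kt}}{2}\ds^2(y_t,z)-\frac12\ds^2(y,z)\leq I_K(t)\big(E(z)-E(y_t)\big),\qquad t>0,
\]
where $I_K(t)=\int_0^t e^{Kr}\,\d r>0$.

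Next I would integrate this pointwise inequality in $z$ against $\d\mu$; the left-hand side is $\mu$-integrable because $\mu$ has finite variance, which also forces $E^-$ to be $\mu$-integrable and thus makes $\int_X E\,\d\mu\in(-\infty,+\infty]$ well defined. Since $E(y_t)$ does not depend on $z$, integration yields
\[
\frac{e^{Kt}}{2}\int_X\ds^2(y_t,z)\,\d\mu(z)-\frac12\int_X\ds^2(y,z)\,\d\mu(z)\leq I_K(t)\Big(\int_X E(z)\,\d\mu(z)-E(y_t)\Big).
\]
Now I would insert the two elementary variance estimates $\int_X\ds^2(y_t,z)\,\d\mu(z)\geq{\rm Var}(\mu)$ (valid since $y_t\in X$ and ${\rm Var}(\mu)$ is the infimum over base points) and $\int_X\ds^2(y,z)\,\d\mu(z)\leq{\rm Var}(\mu)+\epsilon$ from \eqref{nearly barycenter}; because $e^{Kt}>0$ and $I_K(t)>0$, these substitutions are sign-consistent. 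Solving for $E(y_t)$ and using the identity $\tfrac{e^{Kt}-1}{I_K(t)}=K$ (which holds for every $K\in\R$, the case $K=0$ being the limit where the variance term drops out) collapses the estimate exactly to \eqref{NearlyJI}.

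Finally, to obtain \eqref{JI} I would specialize to a barycenter: choosing $y=\bar y$ we have $\int_X\ds^2(\bar y,z)\,\d\mu(z)={\rm Var}(\mu)$, so \eqref{nearly barycenter} holds with $\epsilon=0$. With $\epsilon=0$, inequality \eqref{NearlyJI} becomes $E(y_t)\leq\int_X E(z)\,\d\mu(z)-\tfrac K2{\rm Var}(\mu)$ for all $t>0$, a bound that no longer depends on $t$. Letting $t\downarrow0$ and using $y_t\overset{\ds}\to\bar y$ together with the lower semicontinuity of $E$ (equivalently the condition $\liminf_{t\downarrow0}E(y_t)\geq E(\bar y)$ built into Definition \ref{def:EVI} when $\bar y\notin\De(E)$) gives
\[
E(\bar y)\leq\liminf_{t\downarrow0}E(y_t)\leq\int_X E(z)\,\d\mu(z)-\frac K2{\rm Var}(\mu),
\]
which is \eqref{JI} upon rewriting ${\rm Var}(\mu)=\int_X\ds^2(\bar y,z)\,\d\mu(z)$. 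I expect the algebra to be routine; the only genuinely delicate points are the measure-theoretic justification that the EVI inequality may be integrated $\mu$-a.e.\ (the finiteness/equivalence-class remark in the first step) and the passage $t\downarrow0$, where lower semicontinuity---rather than continuity---of $E$ is exactly what transfers the $t$-independent bound from $y_t$ to the barycenter $\bar y$.
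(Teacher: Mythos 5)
Your proposal is correct and follows essentially the same route as the paper's proof: apply the integral formulation of ${\rm EVI}_K$ (Proposition \ref{integral version}) with $s=0$, integrate in $z$ against $\mu$, insert the bounds $\int\ds^2(y_t,z)\,\d\mu\geq{\rm Var}(\mu)$ and \eqref{nearly barycenter}, divide by $I_K(t)$ using $\frac{e^{Kt}-1}{I_K(t)}=K$, and for \eqref{JI} take $\epsilon=0$ and let $t\downarrow 0$ via lower semicontinuity of $E$. The extra measure-theoretic care you supply (concentration of $\mu$ on the finite-distance class of $y$ and the $\mu$-integrability of $E^-$, which makes $\int_X E\,\d\mu$ well defined) is a welcome refinement of details the paper leaves implicit, but it does not change the argument.
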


\begin{proof}
	Integrating \eqref{eq:integral version} in $z$ with $\mu$ and choosing $s=0$,  we get
	\begin{equation}\label{EVI JI1}
	\frac{e^{Kt}}{2}\int_{X}\ds^2(y_t,z)\,\d\mu(z)-\frac{1}{2}\int_{X}\ds^2(y,z)\,\d\mu(z)\leq I_K(t)\left(\int_{X} E(z)\,\d\mu(z)-E(y_t)\right).
	\end{equation}
	  By Definition \ref{def:barycenter}, we have  $\int_{X}\ds^2(y_t,z)\,\d\mu(z)\geq{\rm Var}(\mu)$. Combining with  \eqref{nearly barycenter} and \eqref{EVI JI1}, we get
	\begin{equation}\label{EVI JI2}
	\frac{e^{Kt}-1}{2}{\rm Var}(\mu)-\frac{\epsilon}{2}\leq I_K(t)\left(\int_{X} E(z)\,\d\mu(z)-E(y_t)\right).
	\end{equation} 
	Dividing both sides of \eqref{EVI JI2} by $I_K(t)$, we get \eqref{NearlyJI}. 
	
Assume $\bar y$ is a barycenter of $\mu$ and $(y_t)_{t>0}$ is an ${\rm EVI}_K$ gradient flow from $\bar y$.   Taking $\epsilon=0$ in \eqref{NearlyJI} we get
	$$
	E(y_t)\leq \int_{X} E(z)\,\d\mu(z)-\frac{K}{2}{\rm Var}(\mu).
	$$
	Letting $t\rightarrow 0$,  by lower semi-continuity of $E$ and the fact $y_t\overset{\ds}{\rightarrow} \bar y$, we get Jensen's inequality \eqref{JI}.
\end{proof}

\subsubsection{Existence and uniqueness}\label{5.1.2}
Using Theorem \ref{EVI JI}, we  can extend Wasserstein Jensen's inequality (for the Wasserstein barycenter) on Riemannian manifolds (cf. \cite{KimPassAIM} ) to some non-smooth extended metric measure spaces,  including  $\rcd$ spaces and abstract Wiener spaces. Unlike the known approaches,  this approach (using  $\rm EVI$ gradient flow) does not rely on absolute continuity of the barycenter or  local compactness of the space.  In fact,   we can  deduce the  absolute continuity of Wasserstein barycenter(s),  as a corollary
of this Wasserstein Jensen's inequality.

\begin{theorem}[Wasserstein Jensen's inequality]\label{WJI}
Let $K\in \mathbb{R}$ and let $(X,\ds,\mm)$ be an extended metric measure space,  $\Omega$ be a  probability measure over $\mathcal{P}(X)$ with finite variance. 

Assume   that  any  ${\mu} \in \pr(X)$ which has finite distance to $\De( {\rm Ent}_{\mm})$ is the starting point of an ${\rm EVI}_K$ gradient flow of ${\rm Ent}_{\mm}$ in the Wasserstein space.
Then the following Wasserstein Jensen's inequality is valid for any barycenter $\bar{\mu}$ of $\Omega$:
	\begin{equation}\label{WJI1}
	{\rm Ent}_{\mm}(\bar{\mu})\leq \int_{\mathcal{P}(X)}{\rm Ent}_{\mm}(\mu)\,\d\Omega(\mu)-\frac{K}{2}\int_{\mathcal{P}(X)}W_2^2(\bar{\mu},\mu)\,\d\Omega(\mu).
	\end{equation}
As a consequence,   suppose  that 
	$$
	\int_{\mathcal{P}(X)}{\rm Ent}_{\mm}(\mu)\,\d\Omega(\mu)<\infty.
	$$
	Then  any barycenter of $\Omega$  is absolutely continuous with respect to $\mm$.
\end{theorem}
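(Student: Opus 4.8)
The plan is to reduce the statement to the abstract Jensen inequality of Theorem \ref{EVI JI}, applied to the extended metric space $\ws=(\pr(X),W_2)$ with the functional $E={\rm Ent}_{\mm}$ and with $\Omega$ playing the role of the measure $\mu$ there. First I would dispose of the trivial case: since $\mm$ is a probability measure, the relative entropy is nonnegative, so ${\rm Ent}_{\mm}\geq 0$ on $\pr(X)$. If $\int_{\pr(X)}{\rm Ent}_{\mm}(\mu)\,\d\Omega(\mu)=+\infty$, then the right-hand side of \eqref{WJI1} equals $+\infty$ (the variance term being finite because $\Omega$ has finite variance), and the inequality holds vacuously. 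Hence it suffices to treat the case $\int_{\pr(X)}{\rm Ent}_{\mm}(\mu)\,\d\Omega(\mu)<\infty$, which is also the hypothesis of the second assertion.

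The crucial point is to verify that the given barycenter $\bar\mu$ is actually the starting point of an ${\rm EVI}_K$ gradient flow of ${\rm Ent}_{\mm}$, i.e. that $\bar\mu$ has finite $W_2$-distance to $\De({\rm Ent}_{\mm})$, so that the standing assumption can be invoked. To this end I would observe that $\int_{\pr(X)}{\rm Ent}_{\mm}(\mu)\,\d\Omega(\mu)<\infty$ forces ${\rm Ent}_{\mm}(\mu)<\infty$ for $\Omega$-a.e.\ $\mu$, while the definition of barycenter (Definition \ref{def:barycenter}) gives $\int_{\pr(X)}W_2^2(\bar\mu,\mu)\,\d\Omega(\mu)={\rm Var}(\Omega)<\infty$, whence $W_2(\bar\mu,\mu)<\infty$ for $\Omega$-a.e.\ $\mu$. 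Intersecting these two $\Omega$-full sets produces a measure $\mu_0$ with ${\rm Ent}_{\mm}(\mu_0)<\infty$ and $W_2(\bar\mu,\mu_0)<\infty$; thus $\bar\mu$ lies at finite distance from $\De({\rm Ent}_{\mm})$, and the hypothesis supplies an ${\rm EVI}_K$ gradient flow of ${\rm Ent}_{\mm}$ issuing from $\bar\mu$.

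With this flow in hand, the ``in particular'' part of Theorem \ref{EVI JI} (together with the lower semicontinuity of ${\rm Ent}_{\mm}$ on $\ws$, which places it in the class of Definition \ref{def:EVI}) yields exactly \eqref{WJI1}. For the final assertion, under the finite-entropy hypothesis the right-hand side of \eqref{WJI1} is finite, since both the entropy integral and ${\rm Var}(\Omega)$ are finite; consequently ${\rm Ent}_{\mm}(\bar\mu)<\infty$. By the very definition of the relative entropy, a finite value is possible only when the measure is absolutely continuous, so $\bar\mu\ll\mm$.

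I expect the only genuinely delicate step to be the distance estimate of the second paragraph: because $W_2$ is merely an \emph{extended} metric, it is a priori unclear that $\bar\mu$ can be joined to any finite-entropy measure at finite cost, and without this one cannot legitimately apply the gradient-flow assumption. This difficulty is resolved precisely by the finite-variance normalization of $\Omega$, which guarantees that $\bar\mu$ and $\Omega$-a.e.\ $\mu$ lie in a common equivalence class of $\ws$; everything else is a routine transcription of Theorem \ref{EVI JI}.
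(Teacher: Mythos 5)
Your proposal is correct and follows essentially the same route as the paper's own proof: dismiss the case $\int_{\pr(X)}{\rm Ent}_{\mm}(\mu)\,\d\Omega(\mu)=+\infty$, use the finite variance of $\Omega$ together with the barycenter property to conclude that $\bar\mu$ has finite $W_2$-distance to $\De({\rm Ent}_{\mm})$, invoke the standing ${\rm EVI}_K$ assumption to obtain a flow starting from $\bar\mu$, and apply Theorem \ref{EVI JI} to get \eqref{WJI1} and then finiteness of ${\rm Ent}_{\mm}(\bar\mu)$ for the absolute continuity. Your write-up merely makes explicit some points the paper leaves implicit (nonnegativity of the entropy since $\mm$ is a probability measure, the intersection of the two $\Omega$-full sets, and the lower semicontinuity needed for Definition \ref{def:EVI}), so there is no substantive difference.
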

\begin{proof}
If $\int_{\mathcal{P}(X)}{\rm Ent}_{\mm}(\mu)\,\d\Omega(\mu)=+\infty$, there is nothing to prove. Otherwise,  $\Omega$ is concentrated on $\De( {\rm Ent}_{\mm})$.
Since $\Omega$ has finite variance,  $\int_{\mathcal{P}(X)}W_2^2(\bar \mu, \mu)\,\d\Omega(\mu)<+\infty$, we can see that $\bar \mu$ has finite distance to  $\De( {\rm Ent}_{\mm})$. By assumption, $\bar \mu$ is the starting point of an ${\rm EVI}_K$ gradient flow of ${\rm Ent}_{\mm}$. Then \eqref{WJI1} follows from Theorem \ref{EVI JI}.
\end{proof}
\begin{remark}
Note that  the condition $\int_{\mathcal{P}(X)}{\rm Ent}_{\mm}(\mu)\,\d\Omega(\mu)<\infty$ implies  that $\Omega$ is concentrated on the absolutely continuous measures. If  $\ms$ is RCD, $\Omega$ is concentrated on finitely many absolutely continuous measures, in Theorem \ref{absolutely continuous} we will prove  the absolute continuity of the Wasserstein barycenter,  without the stronger condition $\int_{\mathcal{P}(X)}{\rm Ent}_{\mm}(\mu)\,\d\Omega(\mu)<\infty$.
\end{remark}

\begin{example}\label{coro:wn}Let  $(X,H,\gamma)$ be an abstract Wiener space equipped with the  Cameron--Martin distance  $\ds_H$ and the Gaussian measure $\gamma$.  Let  $\Omega$ be a  probability measure over $\mathcal{P}(X)$ with finite variance.  

Then  for any barycenter $\bar{\mu}$ of $\Omega$ it holds
	\begin{equation}\label{eq:wn}
	{\rm Ent}_{\gamma}(\bar{\mu})\leq \int_{\mathcal{P}(X)} {\rm Ent}_{\gamma}(\mu)\,\d\Omega(\mu)-\frac{1}{2}\int_{\mathcal{P}(X)} W_2^2(\bar{\mu},\mu)\,\d\Omega(\mu).
	\end{equation}
\end{example}
\begin{proof}
 Assume ${\mu}$  has finite distance from $\De( {\rm Ent}_{\gamma})$, then there is $\nu\in \De( {\rm Ent}_{\gamma})$ so that $W_2(\mu, \nu)<\infty$.  By \cite[Theorem 6.1]{FU}, there is a unique  optimal transport map  $T:={\rm Id}_X+\xi$ with $\xi\in H$ almost surely,  so that  $T_\sharp\nu=\mu$.  Let $(H_n)_{n \geq1}$ be an increasing sequence of regular subspaces of $H$ whose union is dense in $X$.  Let  $(\pi_n)_{n\geq 1}$  be the total increasing sequence of regular projections  (of $H$, converging to the identity map of $H$) associated to  $(H_n)_{n \geq1}$.  Denote also by $(\pi_n)_{n\geq 1}$
their continuous extensions to $X$ and define  $\pi^\perp_n={\rm Id}_X-\pi_n$.
 
For any $n\in N$, we define 
\[
F_n:=\big\{x\in X: \xi(x) \in H_n\big\}.
\]
and
$$\nu_n:=c_n \nu\llcorner_{F_n}\in \pr(X)$$ where $c_n$ is the normalizing constant.  We can see that $\mu_n:=T_\sharp (\nu_n) \to \mu$ in the Wasserstein space.
 
Denote $\mu_{n, t}:= ({\rm Id}_X+t\xi)_\sharp \nu_n$. We claim that  $\mu_{n, t}\ll \gamma$ for any $t\in (0,1)$.  To prove the claim, we adopt the same measure disintegration as in \cite[\S 6]{FU}. Let $\nu_n=\int_{H_n^\perp} \nu_n(\cdot | x^\perp)\,\d \nu_n^{\perp}(x^\perp)$ be a disintegration of $\nu_n$, where $\nu_n^{\perp}$ is  $(\pi^\perp_n)_\sharp \nu_n$  and $\nu_n(\cdot | x^\perp)$ is the regular conditional probability measure. Similarly, we disintegrate the Gaussian measure as $\gamma=\int_{H_n^\perp} \gamma(\cdot | x^\perp)\,\d \gamma^{\perp}(x^\perp)$. By \cite[Theorem 6.1]{FU} and its proof, we know $\gamma(\cdot | x^\perp), \nu_n(\cdot | x^\perp) $ are absolutely continuous with respect to  the $n$-dimensional Hausdorff measure  for almost every $x^\perp$. Since  $({\rm Id}_X+t\xi)_\sharp \nu_n$  is a Wasserstein geodesic, so $ ({\rm Id}_X+t\xi)_\sharp \nu_n(\cdot | x^\perp)$ is also a geodesic in the Wasserstein space $(\pr(H_n), W_2)$, so $({\rm Id}_X+t\xi)_\sharp \nu_n(\cdot | x^\perp)\ll \gamma(\cdot | x^\perp)$ for any $t\in [0,1)$. Then the claim follows from the following property
\[
\mu_{n, t}=({\rm Id}_X+t\xi)_\sharp \nu_n=\int_{H_n^\perp} ({\rm Id}_X+t\xi)_\sharp \nu_n(\cdot | x^\perp)\,\d \nu_n^{\perp}(x^\perp).
\]

  By \cite[\S11 and \S 13]{ambrosio2016optimal},  any measure in $\pr_{ac}(X, \ds_H, \gamma)$ is the starting point of an ${\rm EVI}_1$ gradient flow of ${\rm Ent}_{\gamma}$. Therefore for any $t\in (0,1)$, $\mu_{n,t}$ is the starting point of an ${\rm EVI}_1$ gradient flow.   Then  by completeness of the Wasserstein space (cf. \cite[Proposition 5.4]{FU}) and the Wasserstein contraction of ${\rm EVI}_1$ gradient flows,  we know  $\mu$ is the starting point of an ${\rm EVI}_1$ gradient flow. Then by Theorem \ref{WJI} we get \eqref{eq:wn}.
\end{proof}
\medskip

\begin{example}[See \cite{EH-Configure} and \cite{SS-configueII}]
Let  $(\bf \Upsilon, \ds_{\bf \Upsilon},\pi)$ be an extended metric measure space consisting of the configuration space  $\bf \Upsilon$ over a Riemannian manifold  $(M, g)$,   the  intrinsic metric  $\ds_{\bf \Upsilon}$ and the Poisson  measure $\pi$.   Assume that $(M, g)$ has Ricci curvature bounded below by $K \in \R$.

Then  for any probability measure  $\Omega$ over $\mathcal{P}(\bf \Upsilon)$ with finite variance, its barycenter $\bar{\mu}$ satisfies
	\begin{equation*}
	{\rm Ent}_{\pi}(\bar{\mu})\leq \int_{\mathcal{P}(X)} {\rm Ent}_{\pi}(\mu)\,\d\Omega(\mu)-\frac{K}{2}\int_{\mathcal{P}(X)} W_2^2(\bar{\mu},\mu)\,\d\Omega(\mu).
	\end{equation*}
\end{example}
\begin{proof}
This follows from Theorem \ref{WJI} and
\cite[Theorem 5.10.]{EH-Configure}.
\end{proof}
\medskip

\begin{example}\label{coro:rcd}Let  $\ms$ be an $\rcd$ space.  Let  $\Omega$ be a  probability measure over $\mathcal{P}_2(X, \ds)$ with finite variance.  
Then  for any barycenter $\bar{\mu}$ of $\Omega$,  it holds
	\begin{equation}
	{\rm Ent}_{\mm}(\bar{\mu})\leq \int_{\mathcal{P}_2(X, \ds)} {\rm Ent}_{\mm}(\mu)\,\d\Omega(\mu)-\frac{K}{2}\int_{\mathcal{P}_2(X, \ds)} W_2^2(\bar{\mu},\mu)\,\d\Omega(\mu).
	\end{equation}
\end{example}
\begin{proof}
By \cite[Theorem 5.1]{AGS-M}, any $\mu\in \pr_2(X, \ds)$ is the starting point of an ${\rm EVI}_K$ gradient flow. Then  the assertion follows from Theorem \ref{WJI}.
\end{proof}
\medskip

Applying the same argument  as \cite[Lemma 5.2]{AGS-M}, we can take advantage of the estimate \eqref{NearlyJI} to show the existence of Wasserstein barycenter, without local compactness of the underlying space. 
\begin{theorem}[Existence of the barycenter]\label{th:existenceRCD}
	Let $K\in \mathbb{R}$ and let $(X,\ds,\mm)$ be an extended metric measure space,  $\Omega$ be a  probability measure over $\mathcal{P}(X)$ with finite variance and $$
	\int_{\mathcal{P}(X)}{\rm Ent}_{\mm}(\mu)\,\d\Omega(\mu)<\infty.
	$$  
	
	Assume that one of the following conditions holds:
\begin{itemize}
\item [{\bf A.}]  $(X, \ds, \mm)$ is an $\rcd$ metric measure space and $\Omega$ is concentrated on $\mathcal{P}_2(X, \ds)$;
\item [{\bf B.}]  $\mm$ is a probability measure,  any  ${\mu} \in \pr(X)$ which has finite distance to $\De( {\rm Ent}_{\mm})$ is the starting point of an ${\rm EVI}_K$ gradient flow of ${\rm Ent}_{\mm}$ in the Wasserstein space.
\end{itemize} 
Then $\Omega$ has a barycenter.
\end{theorem}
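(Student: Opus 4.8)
The plan is to prove existence by the direct method, using the ${\rm EVI}_K$ gradient flow of the entropy as a regularizing device that supplies entropy bounds which, in the absence of local compactness, provide the compactness needed to extract a minimizer. Write $V(\nu):=\int_{\pr(X)} W_2^2(\mu,\nu)\,\d\Omega(\mu)$ and $m:={\rm Var}(\Omega)=\inf_{\nu}V(\nu)$, which is finite by hypothesis, and set $\bar E:=\int_{\pr(X)}{\rm Ent}_{\mm}(\mu)\,\d\Omega(\mu)<\infty$. I would first fix a minimizing sequence $(\nu_k)\subset\pr(X)$ with $V(\nu_k)\le m+\epsilon_k$, $\epsilon_k\downarrow 0$. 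Since $V(\nu_k)<\infty$, each $\nu_k$ has finite $W_2$-distance from $\Omega$-a.e.\ $\mu$, hence from $\De({\rm Ent}_{\mm})$; thus under either hypothesis the flow $(\nu_{k,t})_{t>0}$ starting at $\nu_k$ exists (by Theorem \ref{rcdevi} in case {\bf A}, by assumption in case {\bf B}).

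The core of the argument is to build a single diagonal sequence that is simultaneously minimizing and of bounded entropy. Applying the estimate \eqref{NearlyJI} with $y=\nu_k$, underlying measure $\Omega$ and excess $\epsilon=\epsilon_k$ gives, for every $t>0$,
\[
{\rm Ent}_{\mm}(\nu_{k,t})\le \bar E-\frac K2\,m+\frac{\epsilon_k}{2\,I_K(t)}.
\]
Choosing $k_n\uparrow\infty$ with $\epsilon_{k_n}\le 2\,I_K(1/n)$ and setting $\sigma_n:=\nu_{k_n,1/n}$ yields the uniform bound ${\rm Ent}_{\mm}(\sigma_n)\le \bar E-\tfrac K2 m+1=:C$. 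To see that $(\sigma_n)$ is still minimizing, I would integrate the integral form \eqref{eq:integral version} (with $s=0$) against $\Omega$ in the variable $z$; using an entropy lower bound ${\rm Ent}_{\mm}(\nu_{k,t})\ge -C_0$ this produces
\[
V(\nu_{k,t})\le e^{-Kt}\,V(\nu_k)+2\,e^{-Kt}\,I_K(t)\,(\bar E+C_0).
\]
Evaluating along the diagonal and letting $n\to\infty$, where $e^{-K/n}\to 1$, $\epsilon_{k_n}\to 0$ and $I_K(1/n)\to 0$, gives $\limsup_n V(\sigma_n)\le m$. Hence $(\sigma_n)$ is a minimizing sequence with $\sup_n{\rm Ent}_{\mm}(\sigma_n)\le C$ and entropy bounded below.

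Finally I would invoke compactness of entropy sublevels, treating the two cases separately. In case {\bf B}, where $\mm$ is a probability measure, one has ${\rm Ent}_{\mm}\ge 0$ (so $C_0=0$), and the superlinearity of $r\mapsto r\log r$ together with de la Vallée-Poussin and the tightness of the Radon measure $\mm$ makes $\{{\rm Ent}_{\mm}\le C\}$ relatively compact for narrow convergence. In case {\bf A}, the ${\rm RCD}(K,\infty)$ exponential-integrability condition of Theorem \ref{rcdevi} furnishes the lower bound $C_0$ and the relative compactness of entropy sublevels with bounded second moment; the second moments of $\sigma_n$ are controlled because $V(\sigma_n)\le m+1$ forces $W_2(\mu,\sigma_n)$ to be bounded on a set of $\Omega$-measure at least $\tfrac12$, while $\Omega$ has finite variance. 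In either case I extract a subsequence $\sigma_{n_j}\rightharpoonup\bar\nu$. Since $W_2(\mu,\cdot)$ is lower semicontinuous for this convergence, Fatou's lemma makes $V$ lower semicontinuous, so $m\le V(\bar\nu)\le\liminf_j V(\sigma_{n_j})\le m$; thus $V(\bar\nu)=m$ and $\bar\nu$ is a barycenter of $\Omega$.

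I expect the main obstacle to be this last compactness step in the non-locally-compact setting: identifying precisely the topology on $\pr(X)$ for which entropy sublevels (with the extra second-moment control required in case {\bf A}) are relatively compact and for which $W_2$ remains lower semicontinuous, so that the entropy estimate from \eqref{NearlyJI} and the variance estimate from \eqref{eq:integral version} can be combined. The delicate point is the coupling between the choice of $k_n$ and the flow time $1/n$, which is exactly what lets the single sequence $(\sigma_n)$ inherit both the minimizing property and the entropy bound at once, thereby replacing the local-compactness arguments used in the Euclidean, Riemannian and Alexandrov settings.
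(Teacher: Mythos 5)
Your overall architecture --- regularize near-minimizers by running the ${\rm EVI}_K$ flow for a short time coupled to the minimization error, extract a uniform entropy bound from \eqref{NearlyJI}, get tightness from bounded entropy, and conclude by lower semicontinuity of $W_2$ under narrow convergence --- is exactly the paper's strategy: its $\bar{\nu}_\epsilon:=\bar{\mu}^\epsilon_\epsilon$ is your diagonal sequence $\sigma_n$, and the boundedness of $\epsilon/I_K(\epsilon)$ plays the role of your coupling $\epsilon_{k_n}\le 2I_K(1/n)$. In case {\bf B} your argument is complete: there $\mm$ is a probability measure, so ${\rm Ent}_\mm\ge 0$ by Jensen's inequality, your constant $C_0=0$ is legitimate, and integrating \eqref{eq:integral version} against $\Omega$ indeed yields $V(\nu_{k,t})\le e^{-Kt}V(\nu_k)+2e^{-Kt}I_K(t)\bar{E}$, which makes $(\sigma_n)$ minimizing.

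The gap is in case {\bf A}, in the step where you invoke a \emph{uniform} lower bound ${\rm Ent}_\mm(\nu_{k,t})\ge -C_0$. No such constant exists on an ${\rm RCD}(K,\infty)$ space with infinite reference measure (on $\R^n$ with Lebesgue measure, Gaussians of covariance $\sigma^2\,{\rm Id}$ have entropy tending to $-\infty$ as $\sigma\to\infty$). The exponential-integrability condition of Theorem \ref{rcdevi} only gives the moment-dependent bound ${\rm Ent}_\mm(\nu)\ge -c\int \ds^2(x_0,\cdot)\,\d\nu-\ln z$ --- this is precisely the change-of-reference-measure computation the paper performs in Step 3 of Theorem \ref{Existence of Wasserstein barycenter in barycenter space}. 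So your variance estimate for $\nu_{k,t}$ needs second-moment control of $\nu_{k,t}$; but your only source of second-moment control is $V(\sigma_n)\le m+1$, which is the \emph{conclusion} of that same variance estimate. As written, the case {\bf A} argument is circular. It can be repaired: insert the moment-dependent lower bound into the integrated EVI, bound the second moment of $\nu_{k,t}$ by a multiple of $V(\nu_{k,t})$ plus a constant (via a set of $\Omega$-measure at least $\tfrac12$, as you do), and absorb the resulting term $c'\,I_K(t)\,V(\nu_{k,t})$ into the left-hand side, which is possible along the diagonal because $I_K(t)\to 0$; this absorption is the missing idea. The paper avoids the issue by a different mechanism at this step: instead of bounding the entropy of the flowed minimizers from below, it uses the Wasserstein contraction property \eqref{contration of gradient flow} of ${\rm EVI}_K$ flows, $W_2(\mu_\epsilon,\bar{\nu}_\epsilon)\le e^{-K\epsilon}W_2(\mu,\bar{\mu}^\epsilon)$, together with $\int W_2^2(\mu,\mu_\epsilon)\,\d\Omega(\mu)\to 0$, so that near-minimality is transferred from $\bar{\mu}^\epsilon$ to $\bar{\nu}_\epsilon$ with an error involving only the entropies of the data measures $\mu$, which are $\Omega$-integrable by hypothesis, and no lower entropy bound on $\bar{\nu}_\epsilon$ is ever required.
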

\begin{proof}
For any $0<\epsilon<1$,   by  assumption that $\Omega$ has  finite  variance, there exists  $\bar{\mu}^{\epsilon}\in \pr(X)$ such that
	\begin{equation}\label{eq:var}
	\int_{\mathcal{P}(X)} W_2^2(\mu,\bar{\mu}^{\epsilon})\,\d\Omega(\mu)\leq {\rm Var}(\Omega)+\epsilon.
	\end{equation}
	By  assumption and Theorem \ref{EVI JI}, inequality \eqref{NearlyJI},  we have
	\begin{equation}\label{NearlyJI1}
	{\rm Ent}_\mm(\bar{\mu}_t^{\epsilon})\leq \int_{{\mathcal{P}(X)}} {\rm Ent}_\mm(\mu)\,\d\Omega(\mu)-\frac{K}{2}{\rm Var}(\Omega)+\frac{\epsilon}{2I_K (t)},
	\end{equation}
	where $\bar{\mu}_t^{\epsilon}$ denotes the ${\rm EVI}_K$ gradient flow of ${\rm Ent}_\mm$  starting from $\bar{\mu}^{\epsilon}$.  Set $\bar{\nu}_{\epsilon}:=\bar{\mu}_{\epsilon}^{\epsilon}$.  It can be seen  that $\epsilon/I_K(\epsilon)$ is  bounded for $0<\epsilon<1$. Combining this with \eqref{NearlyJI1}, we know that the set of measures $\left\{\bar{\nu}_{\epsilon}\right\}_{\epsilon \in (0,1)}$ has uniformly bounded entropy. Thus, it is tight (cf. \cite[Lemma 4.4]{AGMR-R} and \cite[Theorem 1.2]{FSS10}). Without loss of generality, we   assume that $\bar{\nu}_{\epsilon}$ converges to  $\bar{\nu}\in \mathcal{P}(X)$ in the weak sense as $\epsilon\rightarrow 0$.
	
	We claim that $\bar{\nu}$ is a barycenter of $\Omega$. 	For any   ${\rm EVI}_K$ gradient flows of ${\rm Ent}_\mm$  starting from $x_0, y_0$,  it is  known that (cf.  \cite[Chapter 4]{AGS-G}, \cite[Proposition 2.22]{AGS-M} and \cite[\S 10]{ambrosio2016optimal})
	\begin{equation}\label{contration of gradient flow}
		W_2(x_t,y_t)\leq e^{-Kt}W_2(x_0, y_0)~~~~\forall t>0.
	\end{equation}
Thus
	\begin{equation}\label{contration1}
		W_2(\mu,\bar{\nu}_{\epsilon})-W_2(\mu_\epsilon,\mu)\leq W_2(\mu_\epsilon,\bar{\nu}_{\epsilon})\leq e^{-K\epsilon}W_2(\mu, \bar{\mu}^{\epsilon}),
	\end{equation}
	where  $\mu_\epsilon$ is the ${\rm EVI}_K$ gradient flow of ${\rm Ent}_\mm$   from $\mu$. 
	
	Integrating \eqref{contration1} in $\mu$ with respect to $\Omega$, we get
	\begin{equation}\label{123}
		\int_{\mathcal{P}(X)}W_2^2(\mu,\bar{\nu}_{\epsilon})\,\d\Omega(\mu)\leq \int_{\mathcal{P}(X)}\big(e^{-K\epsilon}W_2(\mu, \bar{\mu}^{\epsilon})+W_2(\mu_\epsilon, \mu)\big)^2\,\d\Omega(\mu).
	\end{equation}
By \eqref{eq:integral version}, we have
$$
\frac{e^{K\epsilon}}{2}W_2^2(\mu,\mu_{\epsilon})\leq I_K(\epsilon){\rm Ent}_{\mm}(\mu).
$$
Then by dominated convergence theorem, we get 
\begin{equation}\label{1231}
\lim_{\epsilon\rightarrow 0} \int_{{\mathcal{P}(X)}} W_2^2(\mu,\mu_{\epsilon})\,\d\Omega(\mu)=\int_{{\mathcal{P}(X)}}  \lim_{\epsilon\rightarrow 0}W_2^2(\mu,\mu_{\epsilon})\,\d\Omega(\mu)=0.
\end{equation}
Combining with the lower semi-continuity
	 $$
	 W_2(\bar{\nu},\mu)\leq\lmti{\epsilon}{ 0}W_2(\bar{\nu}_\epsilon,\mu),\qquad \text{for any } \mu\in\mathcal{P}_2(X),
	 $$
	and \eqref{123},  \eqref{1231},  we   get 
	$$
	\begin{aligned}
	{\rm Var}(\Omega)&\leq \int_{\mathcal{P}(X)}W_2^2(\mu,\bar{\nu})\,\d\Omega(\mu)\\ &\leq \int_{{\mathcal{P}(X)}}\lmti{\epsilon}{ 0}W_2^2(\mu,\bar{\nu}_{\epsilon})\,\d\Omega(\mu)\\&\leq \lmti{\epsilon}{0}\int_{\mathcal{P}(X)}W^2_2(\mu, \bar{\nu}_{\epsilon})\,\d\Omega(\mu)\\&\leq \lmti{\epsilon}{0}\int_{\mathcal{P}(X)}\big(e^{-K\epsilon}W_2(\mu, \bar{\mu}^{\epsilon})+W_2(\mu_\epsilon, \mu)\big)^2\,\d\Omega(\mu)\\&\leq \lmti{\epsilon}{0}\int_{\mathcal{P}(X)} \left [e^{-2K\epsilon}(1+\delta)W_2^2(\mu, \bar{\mu}^{\epsilon})+(1+{\delta}^{-1})W_2^2(\mu_\epsilon, \mu) \right]\,\d\Omega(\mu)\\&\leq \lmti{\epsilon}{0}\int_{\mathcal{P}(X)}(1+\delta)W^2_2(\mu, \bar{\mu}^{\epsilon})\,\d\Omega(\mu)
	\end{aligned}
	$$
	for any $\delta>0$.
	Letting $\delta \to 0$ and combining  with \eqref{eq:var}, we prove the claim.
\end{proof}
\medskip

Similar to Proposition \ref{uniqueness in finite space}, we can prove the uniqueness of Wasserstein barycenter, under a slightly weaker Monge property thanks to Theorem  \ref{th:existenceRCD}. Important examples fitting our setting includes $\rcd$ spaces \cite{gigli2016optimal} and abstract Wiener spaces \cite{FSS10,FU}.

\begin{theorem}[Uniqueness of the barycenter]\label{uniqueness in infinite space}

	Let  $(X,\ds,\mm)$ be an extended metric measure  space satisfying one of the two conditions in Theorem \ref{th:existenceRCD} and satisfying the following weak Monge property: for any $\mu,\nu\in  \pr_{ac}\ms$ with $W_2(\mu, \nu)<\infty$, there exists a unique optimal transport map between $\mu$ and $\nu$.  
	
Then for any $\Omega\in \pr_2(\pr(X), W_2)$ with $$\int_{\pr(X)}{\rm Ent}_\mm(\mu)\,\d\Omega(\mu)<\infty,$$  there exists a unique Wasserstein barycenter of $\Omega$.
\end{theorem}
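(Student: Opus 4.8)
The plan is to combine three ingredients: existence of a barycenter from Theorem \ref{th:existenceRCD}, absolute continuity of \emph{every} barycenter from Theorem \ref{WJI}, and a strict convexity argument in the spirit of Proposition \ref{uniqueness in finite space}. The essential difference from Proposition \ref{uniqueness in finite space} is that the weak Monge property requires \emph{both} marginals to be absolutely continuous, so the whole argument must be carried out inside the convex class $\pr_{ac}\ms$; this is precisely what the finite-entropy hypothesis, via Theorem \ref{WJI}, makes possible.

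Since $(X,\ds,\mm)$ satisfies one of the conditions of Theorem \ref{th:existenceRCD} and $\int_{\pr(X)} {\rm Ent}_\mm(\mu)\,\d\Omega(\mu) < \infty$, that theorem yields at least one barycenter, while Theorem \ref{WJI} guarantees that every barycenter is absolutely continuous with respect to $\mm$. Moreover the finite-entropy condition forces $\Omega$ to be concentrated on $\{{\rm Ent}_\mm < \infty\} \subseteq \pr_{ac}\ms$, so $\Omega$-a.e. $\mu$ is absolutely continuous. Arguing by contradiction, suppose $\Omega$ admits two distinct barycenters $\bar\mu_0, \bar\mu_1 \in \pr_{ac}\ms$, and set $V := {\rm Var}(\Omega) = \int_{\pr(X)} W_2^2(\mu, \bar\mu_i)\,\d\Omega(\mu) < \infty$ for $i = 0,1$. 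Put $\sigma := \tfrac12\bar\mu_0 + \tfrac12\bar\mu_1$; being a convex combination of $\mm$-absolutely continuous measures, $\sigma \in \pr_{ac}\ms$.

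The key step is a strict convexity estimate. For $\Omega$-a.e. $\mu$ one has $\mu \in \pr_{ac}\ms$ and $W_2(\mu, \bar\mu_0), W_2(\mu, \bar\mu_1) < \infty$ (because $V < \infty$), and for such $\mu$ I claim that
\begin{equation*}
W_2^2(\mu, \sigma) < \tfrac12 W_2^2(\mu, \bar\mu_0) + \tfrac12 W_2^2(\mu, \bar\mu_1).
\end{equation*}
The non-strict inequality is the convexity \eqref{5.18} of $\nu \mapsto W_2^2(\mu, \nu)$ under linear interpolation. If equality held, then applying the weak Monge property to the absolutely continuous pairs $(\mu, \bar\mu_0)$ and $(\mu, \bar\mu_1)$ produces optimal maps $T_0, T_1$ with $(T_i)_\sharp\mu = \bar\mu_i$, and the coupling $\gamma := \tfrac12(\Id, T_0)_\sharp\mu + \tfrac12(\Id, T_1)_\sharp\mu \in \Pi(\mu, \sigma)$ would have cost $\tfrac12 W_2^2(\mu,\bar\mu_0) + \tfrac12 W_2^2(\mu,\bar\mu_1) = W_2^2(\mu,\sigma)$, hence be optimal. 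Since $\mu, \sigma \in \pr_{ac}\ms$ with $W_2(\mu,\sigma) < \infty$, the weak Monge property forces the optimal coupling to be concentrated on the graph of a map, so $\gamma$ must be induced by a map; but its disintegration along the first marginal is $\tfrac12\delta_{T_0(x)} + \tfrac12\delta_{T_1(x)}$, which is a Dirac mass only if $T_0(x) = T_1(x)$. Thus $T_0 = T_1$ $\mu$-a.e., whence $\bar\mu_0 = (T_0)_\sharp\mu = (T_1)_\sharp\mu = \bar\mu_1$, contradicting $\bar\mu_0 \neq \bar\mu_1$. This establishes the strict inequality.

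Integrating the strict inequality against $\Omega$ over the $\Omega$-full set of absolutely continuous $\mu$ gives
\begin{equation*}
\int_{\pr(X)} W_2^2(\mu, \sigma)\,\d\Omega(\mu) < \tfrac12 V + \tfrac12 V = V = {\rm Var}(\Omega),
\end{equation*}
which contradicts the characterization of ${\rm Var}(\Omega)$ as the infimal value $\inf_{\nu}\int_{\pr(X)} W_2^2(\mu,\nu)\,\d\Omega(\mu)$. Hence the barycenter is unique. The main obstacle is the strict convexity step: one must know that the interpolant $\sigma$ is absolutely continuous so that the weak Monge property is applicable—this is exactly where Theorem \ref{WJI} is indispensable—and one must use that the uniqueness of the optimal map entails that every optimal coupling between two absolutely continuous measures is map-induced, a property valid in the motivating examples of $\rcd$ spaces \cite{gigli2016optimal} and abstract Wiener spaces \cite{FSS10, FU}.
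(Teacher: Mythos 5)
Your proof is correct and follows essentially the same route as the paper: existence from Theorem \ref{th:existenceRCD}, absolute continuity of barycenters via the Jensen inequality of Theorem \ref{WJI}, then the strict-convexity-under-linear-interpolation argument of Proposition \ref{uniqueness in finite space} applied inside $\pr_{ac}\ms$, and integration against $\Omega$ to contradict minimality. Your write-up is in fact more explicit than the paper's (which simply invokes ``the same argument as in Proposition \ref{uniqueness in finite space}''), and you correctly flag the one point both arguments lean on implicitly, namely that uniqueness of the optimal map is used in the form ``every optimal coupling between absolutely continuous marginals is map-induced,'' which holds in the motivating examples cited.
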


\begin{proof}
	By Theorem \ref{th:existenceRCD}, we know $\Omega$ has a Wasserstein barycenter $\bar{\mu}$.  By  Jensen's inequality in Theorem \ref{WJI},
	\begin{equation}
		{\rm Ent}_\mm(\bar{\mu})\leq \int_{\pr(X)}{\rm Ent}_\mm(\mu)\,\d\Omega(\mu)-\frac{K}{2}\int_{\pr(X)} W_2^2(\bar{\mu},\mu)\,\d \Omega(\mu)<\infty.
	\end{equation}
	This implies that the Wasserstein barycenter $\bar{\mu}$ is absolutely continuous with respect to $\mm$. By assumption, for any $\mu,\nu\in \pr(X)$ with $W_2(\mu, \nu)<\infty$ and $\mu,\nu \ll m$, there exists a unique optimal transport map from $\mu$ to $\nu$. 
	
	Assume by contradiction,  there exist two different Wasserstein barycenters $\bar{\mu}_1,\bar{\mu}_2$ of $\Omega$, so that
	\begin{equation}\label{5.21}
	\int_{\pr(X)}W_2^2(\mu, \bar{\mu}_i)\,\d\Omega(\mu)=\min_{\nu\in\pr(X)}\int_{\pr(X)}W_2^2(\mu, \nu)\,\d\Omega(\mu)~~~~i=1,2.
	\end{equation}
Using the same argument as in the proof of Proposition \ref{uniqueness in finite space}, we can prove
	\begin{equation}\label{5.22}
		W_2^2(\mu, (\bar{\mu}_1+\bar{\mu}_2)/2)<\frac 12 \Big( W_2^2(\mu, \bar{\mu}_1)+W_2^2(\mu, \bar{\mu}_2) \Big).
	\end{equation}
Denote  $\bar{\mu}=(\bar{\mu}_1+\bar{\mu}_2)/2\in \pr(X)$.  Integrating \eqref{5.22} with respect to $\Omega$ and noticing that $\int_{\pr(X)}{\rm Ent}_\mm(\mu)\,\d\Omega(\mu)<\infty$ implies $\Omega(\pr_{ac}\ms)=1$, we obtain 
	\begin{equation}
		\begin{aligned}
			\int_{\pr(X)}W_2^2(\mu, \bar{\mu})\,\d\Omega(\mu)
			&<\frac 12 \int_{\pr(X)}W_2^2(\mu, \bar{\mu}_1)\,\d\Omega(\mu)+\frac 12 \int_{\pr(X)}W_2^2(\mu, \bar{\mu}_2)\,\d\Omega(\mu)\\
			&=\min_{\nu\in\pr(X)}\int_{\pr(X)}W_2^2(\mu, \nu)\,\d\Omega(\mu),
		\end{aligned}
	\end{equation}
	which is a contradiction. Thus the Wasserstein barycenter of $\Omega$ is unique.
\end{proof}

\subsection{${\rm EVI}_{K,N}$ gradient flows}
 Similar to Theorem \ref{EVI JI}, using a
finite dimensional formulation of the gradient flow,  ${\rm EVI}_{K,N}$ gradient flow,  we can prove a finite dimensional version of 
Jensen's inequality, which is new even on $\mathbb{R}^n$. For simplicity, throughout  this  subsection,  we assume that $(X, \ds)$ is a metric space.

 Firstly, we prove the following basic estimates.
\begin{lemma}\label{Estimat EVI -K,N}
	Let $(X,\ds)$ be a metric space,  $K\neq 0$,$N\in (0, \infty)$ and  $U_N$,  $(y_t)$ be as in Definition \ref{def:EVI K,N}. Then  for  $L= \pi \sqrt{\frac N{|K|}}\vee 2$,  there exists $C_1=C_1(K, N)>0$, such that for $t$ sufficiently small, it holds the following uniform estimate (in $z$):
	\begin{itemize}
		\item [(i)] for  $z\in X$ with $\ds(y,z)\leq L$, $\ds(y_t,z)\leq C_1$.
	
	\item [(ii)] for $z\in X$ with $\ds(y,z)> L$, $\ds(y_t,z)\leq  L\ds(y,z)$.
	\end{itemize}

\end{lemma}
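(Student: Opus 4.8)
The plan is to turn the defining inequality of the ${\rm EVI}_{K,N}$ flow into a scalar linear differential inequality, integrate it once, and then read the distance bounds off the result according to the sign of $K$. Set $g(t):=s^2_{K/N}\big(\tfrac12\ds(y_t,z)\big)$; since $y_t$ is locally absolutely continuous and $s_{K/N}$ is locally Lipschitz, $g$ is locally absolutely continuous. Because $U_N(z)/U_N(y_t)=e^{(E(y_t)-E(z))/N}>0$, the right-hand side of the inequality in Definition~\ref{def:EVI K,N} is at most $N/2$, whence
\[
\frac{\d}{\dt}g(t)+K\,g(t)\le \frac N2 \qquad \text{for a.e. } t.
\]
Multiplying by $e^{Kt}$, integrating on $[0,t]$, and using $y_t\overset{\ds}{\to}y$ (so $g(0)=s^2_{K/N}(\tfrac12\ds(y,z))$) yields the master estimate
\[
g(t)\le e^{-Kt}g(0)+\frac{N}{2K}\big(1-e^{-Kt}\big),
\]
which I emphasise is \emph{uniform in} $z$. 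Both claims will follow by inverting $s_{K/N}$ here.

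For $K<0$ put $a:=\sqrt{|K|/N}$, so that $s^2_{K/N}(\theta)=a^{-2}\sinh^2(a\theta)$ is strictly increasing (hence invertible) on $[0,\infty)$ and the master estimate reads
\[
\sinh^2\!\Big(\tfrac a2\ds(y_t,z)\Big)\le e^{|K|t}\sinh^2\!\Big(\tfrac a2\ds(y,z)\Big)+\tfrac12\big(e^{|K|t}-1\big).
\]
Restricting to $t\le \ln 2/|K|$ forces $e^{|K|t}\le 2$, uniformly in $z$. For (i) the constraint $\ds(y,z)\le L$ bounds the right-hand side by $2\sinh^2(\tfrac{aL}{2})+\tfrac12$, and inverting $\sinh$ gives $\ds(y_t,z)\le C_1(K,N)$. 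For (ii) I would verify the elementary inequality
\[
\sinh^2\!\Big(\tfrac a2\,L s\Big)\ge e^{|K|t}\sinh^2\!\Big(\tfrac a2 s\Big)+\tfrac12\big(e^{|K|t}-1\big), \qquad s:=\ds(y,z)>L,
\]
after which monotonicity of $\sinh^2$ gives exactly $\ds(y_t,z)\le L\ds(y,z)$. The choice of $L$ is used twice: $L\ge 2$ yields $\tfrac a2 Ls\ge 2\cdot\tfrac a2 s$, and $L\ge \pi\sqrt{N/|K|}$ forces $\tfrac a2 s>\pi/2>1$; the superexponential growth of $\sinh$ (via $\sinh 2u=2\sinh u\cosh u$ and $\cosh^2u=1+\sinh^2u$) then produces a multiplicative gain large enough to absorb both $e^{|K|t}\le 2$ and the additive term.

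The case $K>0$ is the main obstacle, because $s^2_{K/N}(\theta)=\tfrac NK\sin^2(\sqrt{K/N}\,\theta)$ is \emph{bounded} by $N/K$, is defined only on $[0,\pi\sqrt{N/K})$, and is \emph{not} injective, so the master estimate cannot be inverted naively. The key observation is that feeding $g(0)\le N/K$ into the master estimate produces the \emph{strict} bound
\[
g(t)\le \frac NK\cdot\frac{1+e^{-Kt}}{2}<\frac NK \qquad (t>0),
\]
again uniform in $z$. The plan is to combine this with continuity of $t\mapsto\phi(t):=\tfrac12\ds(y_t,z)$ and $\phi(t)\to\tfrac12\ds(y,z)$: since $g$ attains the value $N/K$ exactly at the maximiser $\tfrac\pi2\sqrt{N/K}$ of $s_{K/N}$, a first-exit (intermediate value) argument shows $\phi$ cannot cross this maximiser for $t>0$, so $\phi$ stays inside the domain $[0,\pi\sqrt{N/K})$ of $s_{K/N}$. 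This bounds $\ds(y_t,z)$ by $C_1\le 2\pi\sqrt{N/K}$, giving (i); and since $\ds(y_t,z)$ is then $<2\pi\sqrt{N/K}\le 2L$ while $L\ds(y,z)>L^2\ge 2L$ whenever $\ds(y,z)>L\ge 2$, claim (ii) holds automatically. The delicate point, and where the argument must be handled with care, is making this first-exit argument uniform in $z$ — which is precisely why the $z$-independence of the strict inequality $g(t)<N/K$ is essential.
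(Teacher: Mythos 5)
Your case $K<0$ is correct and is essentially the paper's own argument: both proofs start from the integral form of the ${\rm EVI}_{K,N}$ inequality with the nonnegative term $U_N(z)/U_N(y_t)$ discarded (your ``master estimate'' is exactly the paper's use of the integral version together with $U_N\geq 0$), then obtain (i) by inverting a monotone hyperbolic function and (ii) from the superexponential growth of $\sinh/\cosh$. The only cosmetic difference is that the paper works with $c_{K/N}$ and phrases (ii) as a contradiction, while you work with $\sinh^2$ and prove the displayed comparison inequality directly; that inequality does check out, since for $s>L$ one has $\tfrac a2 s>\pi/2$ and $\sinh^2\left(\tfrac a2 Ls\right)\geq\sinh^2(as)=4\sinh^2\left(\tfrac a2 s\right)\left(1+\sinh^2\left(\tfrac a2 s\right)\right)\geq 2\sinh^2\left(\tfrac a2 s\right)+2\sinh^2(\pi/2)\geq 2\sinh^2\left(\tfrac a2 s\right)+\tfrac12$, which absorbs $e^{|K|t}\le 2$ and the additive term.

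The gap is in your case $K>0$, in the first-exit argument. The strict bound $g(t)<N/K$ for $t>0$ only forbids $\phi(t):=\tfrac12\ds(y_t,z)$ from \emph{touching} the maximiser $\tfrac\pi2\sqrt{N/K}$ of $s^2_{K/N}$; it does not confine $\phi$ to the domain $[0,\pi\sqrt{N/K})$. If $\phi(0)$ lies in $\left(\tfrac\pi2\sqrt{N/K},\,\pi\sqrt{N/K}\right)$ --- which is not excluded, since a priori $\ds(y,z)$ can be as large as $2\pi\sqrt{N/K}$ --- then $\phi$ may leave the domain through the right endpoint $\pi\sqrt{N/K}$, where $s^2_{K/N}$ tends to $0$ rather than to $N/K$; the master estimate erects no barrier there, so the intermediate value theorem gives nothing, and ``cannot cross the maximiser'' does not imply ``stays inside the domain''. (If instead you extend $\sin$ to all of $[0,\infty)$, the band argument only yields $\ds(y_t,z)<\ds(y,z)+2\pi\sqrt{N/K}$, which fails to give (ii) when, e.g., $L=2$, $\pi\sqrt{N/K}$ is close to $2$ and $\ds(y,z)$ is slightly above $2$.) The correct, and much shorter, treatment is the one the paper uses: for the defining inequality of ${\rm EVI}_{K,N}$ with $K>0$ to be meaningful for every $z\in X$ and a.e.\ $t$, the quantity $\tfrac12\ds(y_t,z)$ must lie in the domain of $s_{K/N}$, so $\ds(y_t,z)<2\pi\sqrt{N/K}$ uniformly in $z$ (and $\ds(y,z)\le 2\pi\sqrt{N/K}$ by letting $t\to0$); the space is bounded, (i) is immediate, and (ii) follows exactly by your closing observation that $L\ds(y,z)>L^2\geq 2L\geq 2\pi\sqrt{N/K}$. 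So your conclusions stand, but the mechanism you propose to reach the key bound would fail; replace the first-exit argument by this well-definedness remark.
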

\begin{proof}
For $K>0$, by Definition \ref{def:EVI K,N} we can see that $(X, \ds)$ is bounded,   there is nothing to prove.  So we  assume $K< 0$.

Recall that the ${\rm EVI}_{K,N}$ gradient flow  satisfies
	\begin{equation}\label{eVI -k,n}
	\frac{\d }{\dt}s_{K/N}^2 \left(\frac{1}{2}\ds(y_t,z)\right)+Ks_{K/N}^2\left(\frac{1}{2}\ds(y_t,z)\right)\leq	\frac{N}{2}\left(1-\frac{U_N(z)}{U_N(y_t)}\right)
	\end{equation}
 for a.e. $t>0$.  Its integral version   (see \cite[Proposition 2.18]{EKS-O} and  Proposition \ref{integral version}) is 
	\begin{equation}\label{Integral eVI -K,N}
	e^{Kt}s_{K/N}^2\left(\frac{1}{2}\ds(y_t,z)\right)-s_{K/N}^2\left(\frac{1}{2}\ds(y,z)\right)\leq I_K(t)\frac{N}{2}\left(1-\frac{U_N(z)}{U_N(y_t)}\right).
	\end{equation}
Notice that $U_N\geq 0$,  by \eqref{Integral eVI -K,N}, we get
	\begin{equation}\label{estimate 1}
	c_{K/N}(\ds(y_t,z))\leq e^{-Kt}c_{K/N}(\ds(y,z))+\sqrt{-\frac{N}{K}}\big(1-e^{-Kt}-K e^{-Kt}I_K(t)\big),
	\end{equation}
	where we  use the identity  $-\frac KN s_{K/N}^2 (\frac{x}{2})=\frac{\sqrt{-K/N}c_{K/N}(x)-1}{2}$. Then the first statement of the lemma follows. 
	
	\medskip
	
	Next,  for $z\in X$ with $\ds(y,z)> L$,   we prove the  assertion by contradiction.  Assume that $\ds(y_t,z)\geq L\ds(y,z)$. By monotonicity of $x\mapsto \sinh x$ and $\mapsto\cosh x$ on $\left[0,\infty\right)$, we have
	\begin{equation}\label{estimate 2}
	\begin{aligned}
	c_{K/N}(\ds(y_t,z))&\geq c_{K/N}(2\ds(y,z))= \sqrt{-\frac{K}{N}}\big(c_{K/N}^2(\ds(y,z))+ s_{K/N}^2(\ds(y,z))\big)\\
	&\geq \cosh(\pi)c_{K/N}(\ds(y,z))+\sqrt{-\frac{N}{K}}\sinh^2(\pi),
	\end{aligned}
	\end{equation}
	When $t$ is sufficiently small, such that 
	$$e^{-Kt}<\cosh(\pi)$$ 
	and 
	$$1-e^{-Kt}-Ke^{-Kt}I_K(t)<\sqrt{-\frac{N}{K}}\sinh^2(\pi),$$
we can see that \eqref{estimate 2} contradicts to \eqref{estimate 1}. 
\end{proof}

\medskip

Now we  can prove a Jensen-type inequality with dimension parameter:
	
\begin{theorem}[Jensen-type inequality with dimension parameter]\label{JIEVI -K,N}
	Let $(X,\ds)$ be a metric space,  $N, K$, $U_N$ and $(y_t)$ be as in Lemma \ref{Estimat EVI -K,N}. Let $\mu$ be a  probability measure on $X$ with finite variance. Assume that ${\rm EVI}_{K,N}$ gradient flow of $E$ exists for any initial data $y_0\in X$.  Then for any   barycenter  $\bar{y}$ of $\mu$,  the following Jensen-type inequality holds:
	\begin{equation}\label{5.11}
\frac{1}{U_N(\bar{y})}	\int_{X} \frac{\ds(\bar{y},z)}{s_{K/N}(\ds(\bar{y},z))}U_N(z)\,\d\mu(z)\leq\int_{X}\frac{\ds(\bar{y},z)}{t_{K/N}(\ds(\bar{y},z))}\,\d\mu(z)<+\infty.
	\end{equation}
	where $t_{K/N}:=s_{K/N}/c_{K/N}$.
\end{theorem}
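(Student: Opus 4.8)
The plan is to run the ${\rm EVI}_{K,N}$ gradient flow $(y_t)_{t>0}$ of $E$ starting from the barycenter $\bar y$ and to differentiate the flow inequality \eqref{eVI -k,n} at $t=0^{+}$, exploiting that $\bar y$ is a critical (minimizing) point of $y\mapsto\int_X\ds^2(y,z)\,\d\mu(z)$. The crucial structural remark is that the flow controls $s_{K/N}^2\!\left(\tfrac12\ds(y_t,z)\right)$, whose linearization in $t$ carries the $z$-dependent weight $s_{K/N}(\ds(y_t,z))/(4\,\ds(y_t,z))$; since the minimality of $\bar y$ is an \emph{unweighted} statement about $\int\ds^2(\cdot,z)\,\d\mu$, this weight must be cancelled on each fibre $z$ \emph{before} integrating. (Integrating first and applying L'Hôpital produces a weighted inequality that does not collapse to \eqref{5.11}, so the fibrewise cancellation is genuinely necessary.)

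First I would record the infinitesimal identities. From $s_{K/N}'=c_{K/N}$, the duplication formula $s_{K/N}(\tfrac r2)c_{K/N}(\tfrac r2)=\tfrac12 s_{K/N}(r)$ and the half-angle identity $K\,s_{K/N}^2(\tfrac r2)=\tfrac N2\bigl(1-c_{K/N}(r)\bigr)$, the chain rule gives, for fixed $z$ and a.e.\ $t$,
\[
\ddt\, s_{K/N}^2\!\left(\tfrac12\ds(y_t,z)\right)=\frac{s_{K/N}(\ds(y_t,z))}{4\,\ds(y_t,z)}\,\ddt\,\ds^2(y_t,z).
\]
Substituting this into \eqref{eVI -k,n} and dividing by the positive factor $s_{K/N}(\ds(y_t,z))/(4\,\ds(y_t,z))$ yields the pointwise differential inequality
\[
\ddt\,\ds^2(y_t,z)\ \le\ \frac{4\,\ds(y_t,z)}{s_{K/N}(\ds(y_t,z))}\left[\frac N2\Bigl(1-\frac{U_N(z)}{U_N(y_t)}\Bigr)-K\,s_{K/N}^2\!\left(\tfrac12\ds(y_t,z)\right)\right].
\]
Integrating in time over $[0,t]$ (the map $t\mapsto\ds^2(y_t,z)$ is locally absolutely continuous, being the composition of the flow with the $1$-Lipschitz $\ds(\cdot,z)$), then integrating in $z$ against $\mu$, and invoking $\int_X\ds^2(y_t,z)\,\d\mu(z)\ge\int_X\ds^2(\bar y,z)\,\d\mu(z)$, gives $0\le\int_X\int_0^t(\cdots)\,\d s\,\d\mu(z)$.

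Next I would divide by $t$ and let $t\to0^{+}$. The inner time-average $\tfrac1t\int_0^t(\cdots)\,\d s$ converges to the value of the integrand at $s=0$, where $y_s\to\bar y$ forces $\ds(y_s,z)\to\ds(\bar y,z)$ and $U_N(y_s)\to U_N(\bar y)$; using $K\,s_{K/N}^2(\tfrac12 r)=\tfrac N2(1-c_{K/N}(r))$ once more collapses the bracket to $\tfrac N2\bigl(c_{K/N}(\ds(\bar y,z))-U_N(z)/U_N(\bar y)\bigr)$. One is left with
\[
0\ \le\ 2N\int_X\frac{\ds(\bar y,z)}{s_{K/N}(\ds(\bar y,z))}\left(c_{K/N}(\ds(\bar y,z))-\frac{U_N(z)}{U_N(\bar y)}\right)\d\mu(z),
\]
which, after dividing by $2N$ and recalling $t_{K/N}=s_{K/N}/c_{K/N}$, is precisely \eqref{5.11}. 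Finiteness of the right-hand side follows from $\mu\in\pr_2(X,\ds)$: for $K<0$ one has $r/t_{K/N}(r)=\sqrt{|K|/N}\,r\coth(\sqrt{|K|/N}\,r)\sim\sqrt{|K|/N}\,r$ as $r\to\infty$, so the integrand grows at most linearly in $\ds(\bar y,z)$, which is $\mu$-integrable since the second moment is finite; for $K>0$ the underlying space is bounded by Definition \ref{def:EVI K,N}.

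The hard part is the passage to the limit under the integral sign, that is, justifying $\lim_{t\to0^{+}}\tfrac1t\int_0^t\int_X(\cdots)\,\d\mu\,\d s=\int_X(\cdots)\big|_{s=0}\,\d\mu$. This is exactly where Lemma \ref{Estimat EVI -K,N} enters: its uniform bounds $\ds(y_s,z)\le C_1$ when $\ds(\bar y,z)\le L$ and $\ds(y_s,z)\le L\,\ds(\bar y,z)$ when $\ds(\bar y,z)>L$ provide, for all small $s$, a $z$-integrable majorant of the integrand (built from $1+\ds(\bar y,z)$ via the finite second moment of $\mu$), so that Fubini plus dominated convergence apply; one must additionally verify that $s\mapsto U_N(y_s)$ is continuous at $0$ along the flow, which follows from the lower semicontinuity of $E$ together with the defining convergence $y_s\to\bar y$. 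Finally, the case $K=0$ is excluded by Lemma \ref{Estimat EVI -K,N} and would be treated separately, where $s_0(r)=r$ and $c_0(r)=1$ trivialize the coefficients, or by letting $K\to0$ in \eqref{5.11}.
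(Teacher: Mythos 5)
Your roadmap is essentially the paper's: you rederive (via the duplication and half-angle identities) exactly the equivalent differential form \eqref{EVI -K,N 1} of the flow inequality, which the paper instead cites from Erbar--Kuwada--Sturm; you integrate in time and in $z$, invoke minimality of the barycenter, and use Lemma \ref{Estimat EVI -K,N} to dominate the far-field part of the $t_{K/N}$-term, with the same separate treatment of $K>0$ and $K=0$. The genuine gap is in the limiting step, precisely the part you flag as ``the hard part'': you apply Fubini plus dominated convergence to the \emph{combined} integrand. First, your justification of the pointwise convergence is wrong: lower semicontinuity of $E$ together with $y_s\overset{\ds}{\to}\bar y$ yields only $\limsup_{s\to 0}U_N(y_s)\le U_N(\bar y)$ (upper semicontinuity of $U_N$), not continuity; the missing half, $U_N(y_s)\ge U_N(\bar y)$, happens to be true, but it needs a separate argument (e.g.\ plugging $z=\bar y$ into the integral form \eqref{Integral eVI -K,N} of Definition \ref{def:EVI K,N}), which you do not give. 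Second, and more seriously, dominated convergence requires a two-sided $\mu$-integrable majorant, and the term $\frac{U_N(z)}{U_N(y_s)}\frac{\ds(y_s,z)}{s_{K/N}(\ds(y_s,z))}$ admits none a priori: its integrability at $s=0$, and even the positivity of $U_N(\bar y)$, are part of the \emph{conclusion} \eqref{5.11}, not hypotheses --- indeed Corollary \ref{entropy finite EVI -k,N} deduces finiteness of the barycenter's entropy from this very theorem, so presupposing such bounds risks circularity. Your proposed majorant ``built from $1+\ds(\bar y,z)$'' only controls the $t_{K/N}$-part of the integrand, not the entropy-weighted part.

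The repair is exactly what the paper does: split the two terms and use one-sided limits, exploiting that the signs are favorable. For the $U_N$-term, apply Fatou's lemma for non-negative integrands (twice, in $s$ and in $z$); this needs only $\liminf_{s\to0}1/U_N(y_s)\ge 1/U_N(\bar y)$, i.e.\ precisely the direction that lower semicontinuity of $E$ \emph{does} provide, and no integrability at all, since Fatou is valid with values in $[0,+\infty]$. For the $t_{K/N}$-term, your dominations from Lemma \ref{Estimat EVI -K,N} (a constant on $\{\ds(\bar y,z)\le L\}$, a multiple of $\ds(\bar y,z)^2$ on $\{\ds(\bar y,z)>L\}$) legitimately give the $\limsup$ inequality by dominated convergence, because that integrand is non-negative and bounded independently of the entropy terms. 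Chaining the two one-sided estimates yields \eqref{5.11}, and the finiteness of the right-hand side then forces finiteness of the left-hand side a posteriori. With this splitting your argument closes; as written, the DCT step does not go through.
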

\begin{proof}
	 Recall that the  ${\rm EVI}_{K,N}$ gradient flow  $(y_t)$ from $y_0$ satisfies
	\begin{equation}\label{EVI -k,n}
	\frac{\d }{\dt}s_{K/N}^2\left(\frac{1}{2}\ds(y_t,z)\right)+Ks_{K/N}^2\left(\frac{1}{2}\ds(y_t,z)\right)\leq	\frac{N}{2}\left(1-\frac{U_N(z)}{U_N(y_t)}\right)
	\end{equation}
 for a.e. $t>0$. 
 It is known that \eqref{EVI -k,n} is equivalent to the following inequality (cf. \cite[Lemma 2.15, (2.20)]{EKS-O}):
	\begin{equation}\label{EVI -K,N 1}
	\frac{1}{2}\frac{\d}{\dt}\ds^2(y_t,z)\leq\frac{N\ds(y_t,z)}{s_{K/N}(\ds(y_t,z))}\left[c_{K/N}(\ds(y_t,z))-\frac{U_N(z)}{U_N(y_t)}\right].
	\end{equation}
	And its integral version is
	\begin{equation}\label{Integral EVI -K,N 1}
	\frac{1}{2}\ds^2(y_t,z)-\frac{1}{2}\ds^2(y,z)\leq N\left[\int_{0}^{t}\frac{\ds(y_s,z)}{t_{K/N}(\ds(y_s,z))}-\frac{{ U_N(z)}\ds(y_s,z)}{{U_N(y_s)}s_{K/N}(\ds(y_s,z))}\,\d s\right].
	\end{equation}

	Integrating \eqref{Integral EVI -K,N 1} in $z$ with the probability measure $\mu$,   choosing $y_0=\bar{y}$ as the  barycenter  and $(y_t)$ as the ${\rm EVI}_{K,N}$ gradient flow of $U_N$ starting from $\bar{y}$,   we get
	\begin{equation}\label{EVI -K,N JI1}
	\begin{aligned}
	&\frac{1}{2}\int_{X}\ds^2(y_t,z)\,\d\mu(z)-\frac{1}{2}\int_{X}\ds^2(\bar{y},z) \,\d\mu(z)\\ \leq &N\left[\int_{X}\int_{0}^{t}\frac{\ds(y_s,z)}{t_{K/N}(\ds(y_s,z))}\,\d s\,\d\mu(z)-\int_{X}\int_{0}^{t}\frac{ U_N(z)}{U_N(y_s)}\frac{\ds(y_s,z)}{s_{K/N}(\ds(y_s,z))}\,\d s \,\d\mu(z)\right].
	\end{aligned}		
	\end{equation}  
	By Definition \ref{def:barycenter}, we know that $\int_{X}\ds(y_t,z)^2 \,\d\mu(z)\geq\int_{X}\ds(\bar{y},z)^2\,\d\mu(z)$. Combining this with \eqref{EVI -K,N JI1}, we get
	\begin{equation}\label{EVI -K,N JI2}
	0\leq N\left[\int_{X}\int_{0}^{t}\frac{\ds(y_s,z)}{t_{K/N}(\d(y_s,z))}\,\d s\,\d\mu(z)-\int_{X}\int_{0}^{t}\frac{ U_N(z)}{U_N(y_s)}\frac{\ds(y_s,z)}{s_{K/N}(\ds(y_s,z))}\,\d s\,\d\mu(z)\right]. 
	\end{equation}
	
Rearrange \eqref{EVI -K,N JI2} and divide both sides of it by $Nt$:
	\begin{equation}\label{EVI -K,N JI3}
	\int_{X} \left[\frac{1}{t}\int_{0}^{t}\frac{U_N(z)}{U_N(y_s)}\frac{\ds(y_s,z)}{s_{K/N}(\ds(y_s,z))}\,\d s\right]\,\d\mu(z)\leq\int_{X}\left[\frac{1}{t}\int_{0}^{t}\frac{\ds(y_s,z)}{t_{K/N}(\ds(y_s,z))}\,\d s\right]\,\d\mu(z).
	\end{equation}
	
	Note that $\lim_{s\rightarrow 0}\d(y_s,z)=\d(\bar{y},z)$ and $x/s_{K/N}(x)$ is continuous and bounded. Combining with the upper semi-continuity  of the non-negative functional $U_N$, by Fatou's lemma, we obtain
	$$
	\liminf_{t\rightarrow 0}\frac{1}{t}\int_{0}^{t}\frac{\ds(y_s,z)}{{U_N(y_s)}s_{K/N}(\ds(y_s,z))}\,\d s\geq \frac{\ds(\bar{y},z)}{{U_N(\bar y)}s_{K/N}(\ds(\bar{y},z))}.
	$$
	 By Fatou's lemma again, we deduce
	\begin{equation}\label{fatou lemma}
	\begin{aligned}
	&\frac{1}{U_N(\bar{y})}\int_{X} \frac{\ds(\bar{y},z)}{s_{K/N}(\ds(\bar{y},z))}U_N(z)\,\d\mu(z)\\
	\leq&\liminf_{t\rightarrow 0}\int_{X} \left[\frac{1}{t}\int_{0}^{t}\frac{U_N(z)}{U_N(y_s)}\frac{\ds(y_s,z)}{s_{K/N}(\ds(y_s,z))}\,\d s\right]\,\d\mu(z).
	\end{aligned}
	\end{equation} 
	
	\medskip
	
	Next, we consider  separately the following two cases: $K>0$ and $K<0$. (For $K=0$, the proof is similar to Theorem \ref{EVI JI}.)
	
	\paragraph{Case 1:} $K>0$. From the definition, we can see that the diameter of $(X,\d)$ is bounded  from above by $\pi \sqrt{\frac N{K}}$. Thus, the function $(y,z) \mapsto \frac{\ds(y,z)}{t_{K/N}(\ds(y,z))}$ is bounded. Notice that $\mu$ is a probability measure. By  Fatou's lemma, we get
	\begin{equation}\label{fatou lemma 2}
	\begin{aligned}
	&\limsup_{t\rightarrow 0}\int_{X} \left[\frac{1}{t}\int_{0}^{t}\frac{\ds(y_s,z)}{t_{K/N}(\ds(y_s,z))}\,\d s \right]\,\d\mu(z)\\ \leq&\int_{X}\limsup_{t\rightarrow 0}\left[\frac{1}{t}\int_{0}^{t}\frac{\ds(y_s,z)}{t_{K/N}(\ds(y_s,z))}\,\d s \right]\,\d\mu(z)\\
	=&\int_{X}\frac{\ds(\bar{y},z)}{t_{K/N}(\ds(\bar{y},z))}\,\d\mu(z)< +\infty.	
	\end{aligned}
	\end{equation} 
	Combining \eqref{EVI -K,N JI3}, \eqref{fatou lemma} and \eqref{fatou lemma 2}, we get \eqref{5.11}.
	
\paragraph{Case 2:} $K<0$. To estimate the right side of \eqref{EVI -K,N JI3} with  Lemma \ref{Estimat EVI -K,N},   we write:
	\begin{eqnarray*}
	&&\int_{X}\frac{1}{t}\int_{0}^{t}\frac{\ds(y_s,z)}{t_{K/N}(\ds(y_s,z))}\,\d s\,\d\mu(z)\\
	&=& \underbrace{\int_{\{z\in X:  \ds(\bar{y},z)\leq L\}}\frac{1}{t}\int_{0}^{t}\frac{\ds(y_s,z)}{t_{K/N}(\ds(y_s,z))}\,\d s\,\d\mu(z)}_{A(t)}\\
	&&+ \underbrace{\int_{\{z\in X:  \ds(\bar{y},z)> L\}}\frac{1}{t}\int_{0}^{t}\frac{\ds(y_s,z)}{t_{K/N}(\ds(y_s,z))}\,\d s\,\d\mu(z)}_{B(t)}.
	\end{eqnarray*}
	
By  Lemma \ref{Estimat EVI -K,N}-(i) and the monotonicity of $x\mapsto \frac{x}{\tanh x}$, we have
\[
\frac{\ds(y_s,z)}{t_{K/N}(\ds(y_s,z))}\leq \frac{C_1}{t_{K/N}(C_1)}~~\text{for all}~z\in X ~\text{with}~ \ds(\bar{y},z)\leq  L.
\]
So by dominated convergence theorem,  we have
\begin{equation}\label{eq1:bckn}
\lmt{t}{0} A(t)=\int_{\{z\in X: \ds(\bar{y},z)\leq L\}}\frac{\ds( \bar{y},z)}{t_{K/N}(\ds(\bar{y},z))}\,\d\mu(z).
\end{equation}
	
	By  Lemma \ref{Estimat EVI -K,N}-(ii) and the inequality $t_{K/N}(x)\geq \frac{ t_{K/N}(L)L}{x}$ for $x>L$, we have
	\[
	\frac{1}{t}\int_{0}^{t}\frac{\ds(y_s,z)}{t_{K/N}(\ds(y_s,z))}\leq \frac 1{t_{K/N}(L)L}\ds(\bar{y},z)^2 ~~\text{for all}~z\in X ~\text{with}~ \ds(\bar{y},z)> L.
	\]
Note that  the function $z\mapsto \ds(\bar{y},z)^2$ is $\mu$-integrable, so by dominated convergence theorem we get
\begin{equation}\label{eq2:bckn}
\lmt{t}{0}B(t)= \int_{\{z\in X: \ds(\bar{y},z)> L\}}\frac{\ds(\bar{y},z)}{t_{K/N}(\ds(\bar{y},z))}\,\d\mu(z).
\end{equation}

 Combining  \eqref{EVI -K,N JI3}, \eqref{fatou lemma}, \eqref{eq1:bckn} and \eqref{eq2:bckn}, we get 
	\begin{equation*}
\frac{1}{U_N(\bar{y})}	\int_{X} \frac{\ds(\bar{y},z)}{s_{K/N}(\ds(\bar{y},z))}U_N(z)\,\d\mu(z)\leq\int_{X}\frac{\ds(\bar{y},z)}{t_{K/N}(\ds(\bar{y},z))}\,\d\mu(z)\leq \frac 1{t_{K/N}(L)L}{\rm Var}(\mu)+\frac{C_1}{\tanh(C_1)}
	\end{equation*}
	which is the thesis.
\end{proof}
\medskip

 Combining Theorem \ref{th:rcdknevi}, Theorem \ref{th:existenceRCD} and  Theorem  \ref{JIEVI -K,N},   we obtain the following corollaries.
	
\begin{corollary}\label{Jensen's equality finite}
Let $N\in [1,\infty)$. Let $(X,\ds,\mm)$ be an ${\rm RCD}(K,N)$ space and $\Omega$ be a Borel probability measure over $\mathcal{P}_2(X, \ds)$ with finite variance. Then it holds the following  Jensen-type inequality:
\begin{equation}\label{JIKN}
\int \frac{W_2(\bar{\mu},\mu)}{s_{K/N}(W_2(\bar{\mu},\mu))}U_N(\mu)\,\d\Omega(\mu)\leq  U_N(\bar{\mu}) \int \frac{W_2(\bar{\mu},\mu)}{t_{K/N}(W_2(\bar{\mu},\mu))}\,\d\Omega(\mu),
\end{equation}
where $\bar{\mu}$ is a barycenter of $\Omega$,    $U_N(\mu)=e^{-\frac{{\rm Ent}_{\mm}(\mu)}{N}}$.
\end{corollary}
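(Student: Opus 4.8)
The plan is to read \eqref{JIKN} as the instance of the abstract dimensional Jensen inequality in Theorem \ref{JIEVI -K,N} obtained by taking as ambient metric space the Wasserstein space $(\mathcal{P}_2(X,\ds),W_2)$, as functional the relative entropy $E={\rm Ent}_\mm$, and as reference probability measure $\Omega$. Since an ${\rm RCD}(K,N)$ space $(X,\ds)$ is a separable complete geodesic metric space, $(\mathcal{P}_2(X,\ds),W_2)$ is a genuine (geodesic) metric space, so Theorem \ref{JIEVI -K,N} is applicable on it once its structural hypotheses are verified. Under the identification $U_N(\mu)=e^{-{\rm Ent}_\mm(\mu)/N}$, the two sides of inequality \eqref{5.11} become exactly the two sides of \eqref{JIKN}, up to the multiplicative factor $U_N(\bar{\mu})$.

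First I would check the flow hypothesis. By Theorem \ref{th:rcdknevi}, the ${\rm RCD}(K,N)$ condition guarantees that for every $\mu\in\mathcal{P}_2(X,\ds)$ there is an ${\rm EVI}_{K,N}$ gradient flow of ${\rm Ent}_\mm$ in $(\mathcal{P}_2(X,\ds),W_2)$ starting from $\mu$; read on the metric space $\mathcal{P}_2(X,\ds)$, this is precisely the assumption ``${\rm EVI}_{K,N}$ gradient flow of $E$ exists for any initial data'' required by Theorem \ref{JIEVI -K,N}. Next I would confirm that any barycenter $\bar\mu$ of $\Omega$ lies in $\mathcal{P}_2(X,\ds)$, so that a flow actually issues from it: since $\Omega$ is concentrated on $\mathcal{P}_2(X,\ds)$ and has finite variance, $\int W_2^2(\bar\mu,\mu)\,\d\Omega(\mu)={\rm Var}(\Omega)<\infty$, hence there is some $\mu_0\in\mathcal{P}_2(X,\ds)$ with $W_2(\bar\mu,\mu_0)<\infty$, and the triangle inequality $W_2(\bar\mu,\delta_{x_0})\le W_2(\bar\mu,\mu_0)+W_2(\mu_0,\delta_{x_0})<\infty$ (for a fixed $x_0$) shows $\bar\mu$ has finite second moment. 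Theorem \ref{JIEVI -K,N} then applies with its $\mu$ replaced by $\Omega$ and its barycenter $\bar y$ by $\bar\mu$.

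Applying Theorem \ref{JIEVI -K,N} would give
\[
\frac{1}{U_N(\bar\mu)}\int \frac{W_2(\bar\mu,\mu)}{s_{K/N}(W_2(\bar\mu,\mu))}U_N(\mu)\,\d\Omega(\mu)\le \int \frac{W_2(\bar\mu,\mu)}{t_{K/N}(W_2(\bar\mu,\mu))}\,\d\Omega(\mu)<+\infty,
\]
and multiplying through by $U_N(\bar\mu)\ge 0$ produces \eqref{JIKN}. Since the heavy lifting already sits in Theorem \ref{JIEVI -K,N}, I expect the only genuinely delicate point to be the status of the factor $U_N(\bar\mu)$: the inequality is meaningful only when $U_N(\bar\mu)>0$, i.e. ${\rm Ent}_\mm(\bar\mu)<\infty$, which is not assumed a priori. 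This finiteness in fact comes for free, because the finiteness of the right-hand side above, together with the factor $1/U_N(\bar\mu)$ appearing on the left (and the lower semicontinuity of ${\rm Ent}_\mm$ used via Fatou's lemma in the proof of Theorem \ref{JIEVI -K,N}), rules out $U_N(\bar\mu)=0$; this is the regularity of the barycenter recorded in the subsequent corollaries. Finally, existence of a barycenter to which the estimate can be applied is not needed for the stated inequality, but when one additionally assumes $\int{\rm Ent}_\mm\,\d\Omega<\infty$ it is supplied by Theorem \ref{th:existenceRCD} through the implication ${\rm RCD}(K,N)\Rightarrow{\rm RCD}(K,\infty)$.
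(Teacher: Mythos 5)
Your proposal is correct and follows exactly the paper's route: the paper's entire proof is the one-line remark ``Combining Theorem \ref{th:rcdknevi}, Theorem \ref{th:existenceRCD} and Theorem \ref{JIEVI -K,N}, we obtain the following corollaries,'' and you have simply unfolded that combination — using Theorem \ref{th:rcdknevi} to supply the ${\rm EVI}_{K,N}$ flows of ${\rm Ent}_\mm$ on $(\mathcal{P}_2(X,\ds),W_2)$, checking $\bar\mu\in\mathcal{P}_2(X,\ds)$, and then specializing the abstract inequality \eqref{5.11} of Theorem \ref{JIEVI -K,N} and clearing the factor $U_N(\bar\mu)$. The only cosmetic caveat is that finiteness of the right-hand side forces $U_N(\bar\mu)>0$ only when the left-hand integral is positive; in the degenerate case where $\Omega$-a.e.\ $\mu$ has infinite entropy both sides of \eqref{JIKN} vanish and the inequality holds trivially, so this does not affect the argument.
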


\begin{corollary}\label{entropy finite EVI -k,N}
Let $N\in [1,\infty) $ and $(X,\ds,\mm)$ be an ${\rm RCD}(K,N)$ space.  Let $\Omega$ be a Borel probability measure over $\mathcal{P}_2(X, \ds)$ with finite variance, which gives positive mass to the set $\De( {\rm Ent}_{\mm})=\left\{\mu: {\rm Ent}_{\mm}(\mu)<\infty\right\}$. Then the entropy of the barycenter of $\Omega$ is finite. In particular, the barycenter  is absolutely continuous with respect to $\mm$ and is unique.
\end{corollary}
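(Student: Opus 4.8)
The plan is to read everything off the dimensional Wasserstein Jensen inequality of Corollary~\ref{Jensen's equality finite}, once a barycenter is in hand. First I would secure existence: an $\rcdkn$ space with $N<\infty$ obeys the Bishop--Gromov volume comparison, hence is locally doubling, and being a complete length space it is therefore proper (in particular separable, locally compact and geodesic). Proposition~\ref{proper space} then guarantees that $\Omega$ has at least one barycenter $\bar\mu$. Fixing such a $\bar\mu$ and applying Corollary~\ref{Jensen's equality finite} gives
\[
\int \frac{W_2(\bar{\mu},\mu)}{s_{K/N}(W_2(\bar{\mu},\mu))}U_N(\mu)\,\d\Omega(\mu)\leq  U_N(\bar{\mu}) \int \frac{W_2(\bar{\mu},\mu)}{t_{K/N}(W_2(\bar{\mu},\mu))}\,\d\Omega(\mu),
\]
where $U_N(\mu)=e^{-{\rm Ent}_{\mm}(\mu)/N}$.

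The core step is to show ${\rm Ent}_{\mm}(\bar\mu)<+\infty$, and I would do this by contradiction. The coefficient $\int \frac{W_2(\bar{\mu},\mu)}{t_{K/N}(W_2(\bar{\mu},\mu))}\,\d\Omega(\mu)$ on the right is finite: this is precisely the a priori bound established inside the proof of Theorem~\ref{JIEVI -K,N}, where the integrand is dominated by a multiple of ${\rm Var}(\Omega)$ plus a dimensional constant. Suppose now that ${\rm Ent}_{\mm}(\bar\mu)=+\infty$, i.e. $U_N(\bar\mu)=0$; then the entire right-hand side vanishes and the left-hand integral must be $\le 0$. On the other hand $\theta\mapsto \theta/s_{K/N}(\theta)$ is strictly positive on the admissible range of distances (it tends to $1$ as $\theta\to 0$ and $s_{K/N}>0$ throughout that range), while $U_N\ge 0$ with $U_N(\mu)>0$ precisely on $\De({\rm Ent}_{\mm})$. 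Since by hypothesis $\Omega(\De({\rm Ent}_{\mm}))>0$, the left-hand integrand is strictly positive on a set of positive $\Omega$-measure, so the left-hand integral is strictly positive --- a contradiction. Hence $U_N(\bar\mu)>0$, that is ${\rm Ent}_{\mm}(\bar\mu)<+\infty$.

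With finite entropy the remaining claims are short. Absolute continuity $\bar\mu\ll\mm$ is immediate from the definition of ${\rm Ent}_{\mm}$, which is set to $+\infty$ unless $\mu\ll\mm$. For uniqueness I would invoke Proposition~\ref{uniqueness in finite space}: since $\De({\rm Ent}_{\mm})\subseteq \pr_{ac}\ms$ we have $\Omega(\pr_{ac}\ms)\ge\Omega(\De({\rm Ent}_{\mm}))>0$, and $\rcdkn$ spaces are Monge spaces (optimal maps exist as soon as the source measure is absolutely continuous), so the strict-convexity argument of that proposition applies and pins down a unique barycenter.

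The delicate point --- and the one I would be most careful about --- is the bookkeeping that makes the contradiction valid: one needs the right-hand coefficient to be genuinely finite so that $0\cdot(\text{coefficient})=0$, and one needs the sign of the distortion quotients under control, which is only subtle when $K>0$, where $t_{K/N}$ may change sign near the (Bonnet--Myers bounded) diameter. Both issues are absorbed into the a priori estimate of Theorem~\ref{JIEVI -K,N}, so the genuine content is concentrated in the earlier results and this corollary is essentially an extraction argument from them.
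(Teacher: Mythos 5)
Your proposal is correct and follows essentially the same route as the paper: the paper's own proof is exactly your core step, namely that $\Omega(\De({\rm Ent}_{\mm}))>0$ forces the left-hand side of the dimensional Jensen inequality \eqref{JIKN} to be strictly positive, which is only compatible with $U_N(\bar{\mu})>0$, i.e.\ ${\rm Ent}_{\mm}(\bar{\mu})<\infty$, with existence obtained as you do (properness of ${\rm RCD}(K,N)$ spaces plus Proposition \ref{proper space}) and with the finiteness of the right-hand coefficient supplied by Theorem \ref{JIEVI -K,N}. The one divergence is the uniqueness step: the paper cites Theorem \ref{uniqueness in infinite space}, whose stated hypothesis $\int {\rm Ent}_{\mm}(\mu)\,\d\Omega(\mu)<\infty$ is not assumed here, whereas your appeal to the strict-convexity argument of Proposition \ref{uniqueness in finite space} (using $\Omega(\pr_{ac}\ms)\geq\Omega(\De({\rm Ent}_{\mm}))>0$ and the Monge property of ${\rm RCD}(K,N)$ spaces) fits the actual hypotheses of the corollary more cleanly.
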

\begin{proof}
Let $\bar{\mu}$ be a barycenter of $\Omega$.  Since $\Omega(\De( {\rm Ent}_{\mm}))>0$,  we have
	$$
	\int_{\mathcal{P}_2(X, \ds)} \frac{W_2(\bar{\mu},\mu)}{s_{K/N}(W_2(\bar{\mu},\mu))}U_N(\mu)\,\d\Omega(\mu)>0.
	$$
From \eqref{JIKN}, we can see that $U_N(\bar{\mu})=e^{-\frac{{\rm Ent}_{\mm}(\bar{\mu})}{N}}>0$ and ${\rm Ent}(\bar{\mu})<\infty$.  By Theorem \ref{uniqueness in infinite space} we know the barycenter is unique.
\end{proof}

\subsection{Multi-marginal optimal transport}
In this subsection we will study multi-marginal optimal transport problem  in the setting of ${\rm RCD}$  metric measure spaces. This is a continuation of Gangbo and Swiech's  study in Euclidean setting \cite{gangbo1998optimal}, Kim and Pass's   in Riemannian manifolds setting \cite{kim2015multi} and Jiang's   \cite{JiangWasserstein} in Alexandrov space setting.  It is  worth to emphasize that,  the relationship between multi-marginal optimal transport problem and Wasserstein barycenter problem,  was observed and investigated by Carlier--Ekeland \cite{carlier2010matching},  and  by Agueh--Carlier in their seminal paper \cite{AguehCarlier}. In the following discussion,   we focus on the  cost function $$c(x_1,\dots,x_n)=\inf_{y\in X}\sum_{i=1}^n  \ds^2(x_i,y)\in [0, +\infty].$$
Note that for $n=2$, the multi-marginal optimal transport problem is equivalent to finding the mid-point of two given measures.

 Before stating our main theorem, we summarize   some results which have been proved in the previous sections.

\begin{proposition}\label{finite support}
	Let $(X,\ds,\mm)$ be a metric measure space,  $\mu_1,\dots,\mu_n\in \pr_2(X, \ds)$.  Assume one of the following conditions holds:
	\begin{itemize}
	\item  [{\bf A.}] $(X,\d,\mm)$ is an ${\rm RCD}(K,N)$ space, and  ${\rm Ent}_\mm(\mu_1)<\infty$;
	\item [{\bf B.}] 	 $(X,\d,\mm)$ is an  ${\rm RCD}(K,\infty)$ space, and  ${\rm Ent}_\mm(\mu_i)<\infty$ for any $i=1,\dots,n$.
	\end{itemize}
Then there exists a unique Wasserstein barycenter  of the measure $\frac 1n \sum_{i=1}^n \delta_{\mu_i}$,  and the barycenter is absolutely continuous with respect to $\mm$.

\end{proposition}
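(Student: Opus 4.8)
The plan is to assemble the three asserted properties---existence, absolute continuity, and uniqueness---from the results of the preceding subsections, treating the two hypotheses separately, since they rest on different machinery. Throughout I set $\Omega:=\frac1n\sum_{i=1}^n\delta_{\mu_i}$. Since $(X,\ds)$ is a genuine metric space, $(\pr_2(X,\ds),W_2)$ is a metric space and hence $W_2(\mu_i,\mu_j)<\infty$ for all $i,j$; taking $\mu_1$ as a reference gives
\[
{\rm Var}(\Omega)\le\frac1n\sum_{i=1}^n W_2^2(\mu_i,\mu_1)<\infty .
\]
Thus in either case $\Omega$ is a finitely supported Radon measure concentrated on $\pr_2(X,\ds)$ with finite variance, i.e. $\Omega\in\pr_2(\pr_2(X,\ds),W_2)$.

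For case {\bf B} I would first observe that the finite-entropy hypothesis on every marginal yields
\[
\int_{\pr(X)}{\rm Ent}_\mm(\mu)\,\d\Omega(\mu)=\frac1n\sum_{i=1}^n{\rm Ent}_\mm(\mu_i)<\infty .
\]
Existence of a barycenter then follows immediately from Theorem \ref{th:existenceRCD} (condition {\bf A}) applied to the $\rcd$ space $\ms$. Absolute continuity is a direct consequence of the Wasserstein Jensen inequality of Theorem \ref{WJI}: the displayed finiteness forces ${\rm Ent}_\mm(\bar\mu)<\infty$, whence $\bar\mu\ll\mm$. For uniqueness I would invoke Theorem \ref{uniqueness in infinite space}, whose hypotheses are met here: an $\rcd$ space satisfies the weak Monge property, because between two $\mm$-absolutely continuous measures at finite $W_2$-distance there is a unique optimal transport map (see \cite{gigli2016optimal}), and the finite-entropy condition guarantees $\Omega(\pr_{ac}\ms)=1$, so the strict-convexity argument of Proposition \ref{uniqueness in finite space} pins the barycenter down.

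Case {\bf A} is the more delicate one, and is where I expect the main obstacle to lie: here only $\mu_1$ is assumed to have finite entropy, so $\int{\rm Ent}_\mm\,\d\Omega$ may equal $+\infty$ and Theorem \ref{th:existenceRCD} is no longer directly available for the existence step. The key additional input is that an $\rcdkn$ space with $N<\infty$ is proper. Indeed, the Bishop--Gromov volume comparison, a consequence of $\cdkn$ (cf. \cite{S-O2}), makes $\mm$ locally doubling, so bounded sets are totally bounded; completeness then yields local compactness, and the Hopf--Rinow lemma upgrades this to properness and geodesicity. With $(X,\ds)$ separable, locally compact and geodesic, the Le Gouic--Loubes theorem (Proposition \ref{proper space}) produces a barycenter $\bar\mu$ of $\Omega$.

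Finally, to obtain absolute continuity and uniqueness in case {\bf A} I would appeal to the dimensional Jensen inequality. Since $\Omega$ assigns mass $\tfrac1n>0$ to $\De({\rm Ent}_\mm)$ (as ${\rm Ent}_\mm(\mu_1)<\infty$), Corollary \ref{entropy finite EVI -k,N} applies and yields simultaneously that ${\rm Ent}_\mm(\bar\mu)<\infty$---hence $\bar\mu\ll\mm$---and that the barycenter is unique. This settles both cases. The only genuinely nonroutine point is the properness of finite-dimensional $\rcd$ spaces, which is what allows one to bypass the entropy hypothesis in the existence step of case {\bf A}; every other assertion is a direct citation of the earlier theorems and corollaries.
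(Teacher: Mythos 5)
Your proof is correct and follows essentially the same route as the paper: existence via Le Gouic--Loubes (Proposition \ref{proper space}) in case {\bf A} and via Theorem \ref{th:existenceRCD} in case {\bf B}, absolute continuity via Corollary \ref{entropy finite EVI -k,N} and Theorem \ref{WJI} respectively, and uniqueness via the strict-convexity/Monge-property results (Proposition \ref{uniqueness in finite space}, Theorem \ref{uniqueness in infinite space}). Your only additions are details the paper leaves implicit --- the finite-variance check and the properness of ${\rm RCD}(K,N)$ spaces via Bishop--Gromov --- both of which are correct and welcome.
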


\begin{proof}
By Proposition \ref{proper space} and Theorem \ref{th:existenceRCD}, we have the existence of  Wasserstein barycenter for $\rcdkn$ and $\rcd$ spaces respectively. From Proposition \ref{uniqueness in finite space} and Theorem \ref{uniqueness in infinite space}, we know that Wasserstein barycenter is unique.
	
The absolute continuity of the  Wasserstein barycenter follows from Corollary \ref{entropy finite EVI -k,N} and Theorem \ref{WJI} respectively.
\end{proof}

Using Proposition \ref{finite support}, we can prove the existence and the uniqueness of  the multi-marginal optimal transport map,  in case the marginal measures have finite entropy.

\begin{proposition}\label{finite entropy MOT}
	Let $(X,\ds,\mm)$ be a metric measure space,  $\mu_1,\dots,\mu_n\in \pr_2(X, \ds)$. Then the multi-marginal optimal transport problem of Monge type, associated with the cost function $c(x_1,\dots,x_n)$,   has a unique solution,  if one of the following conditions holds:
	\begin{itemize}
	\item  [{\bf A.}] $(X,\ds,\mm)$ is an ${\rm RCD}(K,N)$ space, and  ${\rm Ent}_\mm(\mu_1)<\infty$;
	\item [{\bf B.}] 	 $(X,\ds,\mm)$ is an ${\rm RCD}(K,\infty)$ space, and  ${\rm Ent}_\mm(\mu_i)<\infty$ for any $i=1,\dots,n$.
	\end{itemize}
	In particular, any multi-marginal  optimal transport plan $\pi$ is concentrated on the graph of a $X^{n-1}$-valued map, and for $\pi$-a.e. ${\bf x}:=(x_1, \dots, x_n)\in X^n$,  the measure $\frac 1n \sum_{i=1}^n \delta_{x_i}$  has a unique barycenter in $X$.
 
\end{proposition}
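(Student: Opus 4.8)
The plan is to transfer everything to the Wasserstein barycenter problem for $\Omega:=\frac1n\sum_{i=1}^n\delta_{\mu_i}$, which has already been solved in Proposition \ref{finite support}, and then to use the regularity theory of optimal maps on $\rm RCD$ spaces to turn the resulting Kantorovich optimizer into a genuine Monge map issued from $\mu_1$. Under either hypothesis Proposition \ref{finite support} furnishes a \emph{unique} barycenter $\bar\mu$ of $\Omega$ with $\bar\mu\ll\mm$. Recalling the proof of Theorem \ref{barycenter space} and the superposition Theorem \ref{th:sp}, the optimal value of the Kantorovich problem \eqref{KP} for the cost $c$ equals $\sum_{i=1}^n W_2^2(\mu_i,\bar\mu)$, and there is a coupling $\eta\in\Pi(\mu_1,\dots,\mu_n,\bar\mu)$ whose last coordinate is, for $\eta$-a.e. $(x_1,\dots,x_n,y)$, a barycenter of $\frac1n\sum_i\delta_{x_i}$, and whose first $n$ coordinates project to an optimal plan $\pi$.

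The core of the argument is to rigidify $\eta$. Writing $p_1,\dots,p_{n+1}$ for the coordinate projections on $X^{n+1}$, one has $\int_{X^{n+1}}\sum_{i=1}^n\ds^2(x_i,y)\,\d\eta=\sum_{i=1}^n W_2^2(\mu_i,\bar\mu)$, while each two-marginal $(p_i,p_{n+1})_\sharp\eta$ lies in $\Pi(\mu_i,\bar\mu)$ and therefore costs at least $W_2^2(\mu_i,\bar\mu)$; hence every one of these inequalities is forced to be an equality and each $(p_i,p_{n+1})_\sharp\eta$ is an \emph{optimal} coupling between $\mu_i$ and $\bar\mu$. Since $\bar\mu\ll\mm$ and an $\rm RCD$ space is essentially non-branching, this optimal coupling is unique and induced by a map $S_i\colon X\to X$ from the $\bar\mu$-side (cf. \cite{gigli2016optimal,RS-N}), i.e. $(p_{n+1},p_i)_\sharp\eta=(\Id,S_i)_\sharp\bar\mu$. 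Disintegrating $\eta$ over its last coordinate, the conditional $\eta_y$ has all of its first $n$ marginals equal to the Dirac mass $\delta_{S_i(y)}$, so $\eta_y=\delta_{(S_1(y),\dots,S_n(y))}$ and $\eta=(S_1,\dots,S_n,\Id)_\sharp\bar\mu$; in particular $\pi=(S_1,\dots,S_n)_\sharp\bar\mu$ is concentrated on the graph $\{(S_1(y),\dots,S_n(y))\}_{y\in X}$.

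To produce a solution of the Monge problem \eqref{MP} I would invert $S_1$: in both cases $\mu_1\ll\mm$, so the optimal map $R\colon\mu_1\to\bar\mu$ exists and is the essential inverse of $S_1$ (inverse-map theorem on $\rm RCD$ spaces). Setting $T_i:=S_i\circ R$ gives $(T_i)_\sharp\mu_1=\mu_i$ and $\pi=(\Id,T_2,\dots,T_n)_\sharp\mu_1$, whence $\eqref{MP}=\eqref{KP}$ and $(T_2,\dots,T_n)$ is a minimizer. For uniqueness and for the claim that \emph{every} optimal plan is a graph, I would rerun the same rigidity on an arbitrary optimizer $\pi'$: adjoining a measurable barycenter coordinate $b(\mathbf x)$ (Lemma \ref{measurable selection}, on the configurations carried by $\pi'$, which admit barycenters in the proper case \textbf{A}), Theorem \ref{barycenter space} shows $b_\sharp\pi'$ is again a barycenter of $\Omega$, hence equals $\bar\mu$ by uniqueness; the resulting $(n+1)$-plan has minimal cost, so the computation above forces it to be $(S_1,\dots,S_n,\Id)_\sharp\bar\mu$ and therefore $\pi'=\pi$. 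The same identity exhibits $y=R(x_1)$ as the \emph{unique} barycenter of $\frac1n\sum_i\delta_{x_i}$ for $\pi$-a.e. $\mathbf x$, since any alternative selection would, through the essential injectivity of $S_1$, reproduce the last coordinate of $\eta$.

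The step I expect to be the main obstacle is precisely this rigidification: verifying that the two-marginals of the barycentric coupling are genuinely optimal and, above all, converting the $\bar\mu$-based splitting map $(S_1,\dots,S_n)$ into a bona fide Monge map on $\mu_1$. This rests entirely on the existence and essential invertibility of optimal maps out of absolutely continuous measures, which is exactly where the $\rm RCD$ (essentially non-branching) structure enters---no local compactness or sectional-curvature bound is used---and it is also what dictates the asymmetry between the hypotheses: in case \textbf{A} only $\mu_1$ must be absolutely continuous, as it is the source of the Monge map and the only inversion needed, whereas in case \textbf{B}, where $\rm RCD(K,\infty)$ need not be locally compact, one invokes the entropy/${\rm EVI}$ machinery behind Proposition \ref{finite support} and hence needs every $\mu_i$ to have finite entropy.
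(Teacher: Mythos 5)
Your proposal is correct and rests on the same pillars as the paper's own proof: the unique absolutely continuous barycenter $\bar\mu$ from Proposition \ref{finite support}, the existence and uniqueness of optimal maps issued from absolutely continuous measures on (essentially non-branching) ${\rm RCD}$ spaces as in \cite{cavalletti2017optimal} and \cite{gigli2016optimal}, the identity between the multi-marginal Kantorovich value and the barycenter value, and a measurable barycenter selection in the uniqueness step. The difference is one of organization. The paper writes down the explicit candidate $(T_2\circ T_1^{-1},\dots,T_n\circ T_1^{-1})$ and certifies its optimality through the inequality chain \eqref{8.4}, then proves uniqueness by contradiction against a competing Monge map; you instead rigidify couplings first: in any cost-minimizing coupling $\eta\in\Pi(\mu_1,\dots,\mu_n,\bar\mu)$ the two-marginals $(p_i,p_{n+1})_\sharp\eta$ are forced to be optimal couplings with $\bar\mu$, hence induced by the maps $S_i$ from the $\bar\mu$-side, so $\eta=(S_1,\dots,S_n,\Id)_\sharp\bar\mu$, and only afterwards do you invert $S_1$ to produce the Monge solution. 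This ordering buys something: it proves head-on that \emph{every} optimal plan is concentrated on a graph and that the barycenter of $\frac1n\sum_i\delta_{x_i}$ is $\pi$-a.e.\ unique, two claims whose justification in the paper's write-up is compressed (the relevant sentence of the paper's proof is in fact garbled). One caveat, which you share with the paper: in case {\bf B} the space need not be proper, so neither Lemma \ref{measurable selection} (stated for barycenter spaces) nor your parenthetical remark (which covers only case {\bf A}) justifies the exact barycenter coordinate $b(\mathbf{x})$ adjoined to an arbitrary optimizer $\pi'$; there the step should be replaced by measurable $\epsilon$-barycenter selections, as in the proof of Theorem \ref{th:sp}, together with a limiting argument, so as written the uniqueness and graph assertions in case {\bf B} are incomplete in your argument and, arguably, in the paper's as well.
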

\begin{proof}
	First of all,  by \eqref{eq:mmg1} in the proof of  Theorem \ref{th:sp}, we have
	\begin{equation}\label{536}
		\inf_{\nu\in \pr(X)}\sum_{i=1}^n W_2^2(\mu_i,\nu)\leq \inf_{\pi\in\Pi}\int_{X^n}c(x_1,\dots,x_n)\,\d\pi.
	\end{equation}

By Proposition  \ref{finite support}, we know  $\frac 1n \sum \delta_{\mu_i}$ has a unique Wasserstein barycenter $\bar{\mu}$,   which is absolutely continuous with respect to $\mm$.  For  $i\in \{1,\dots,n\}$,  by \cite{cavalletti2017optimal} and \cite{gigli2016optimal} respectively, there exists a unique optimal transport map $T_i$ from $\bar{\mu}$ to $\mu_i$.  In addition,  there is a unique optimal transport map from $\mu_1$ to $\bar{\mu}$, which is the inverse  of $T_1$ and we denote it by $T_1^{-1}$. In particular,  we have
	$$W_2^2(\mu_i, \bar{\mu})=\int_{X} \ds^2(x,T_i({\bf x}))\,\d\bar{\mu}, \quad i=1,\dots,n,$$
	and
	$$W_2^2(\mu_1, \bar{\mu})=\int_{X} \ds^2(x_1,T_1^{-1}(x_1))\,\d{\mu_1}.$$
	
	We claim $(T_2\circ T_1^{-1},\dots,T_n\circ T_1^{-1})$ is a multi-marginal optimal transport map. It is surely a multi-marginal transport map, and
	\begin{equation}\label{8.4}
		\begin{aligned}
			\inf_{\pi\in\Pi}\int_{X^n}c(x_1,\dots,x_n)\,\d\pi
			&\leq \int_{X}c\left(x_1,T_2\circ T_1^{-1}(x_1),\dots,T_n\circ T_1^{-1}(x_1)\right)\,\d\mu_1\\
			&= \int_{X}\inf_{y\in X}\sum_{i=1}^n \ds^2(T_i\circ T_1^{-1}(x_1),y)\,\d\mu_1\\
			&\leq \int_{X}\sum_{i=1}^n \ds^2(T_i\circ T_1^{-1}(x_1),T_1^{-1}(x_1))\,\d\mu_1\\
			&=\int_{X}\sum_{i=1}^n \ds^2(T_i(z),z)\,\d ({T_1^{-1}})_{\sharp}\mu_1(z)\\
			&=\int_{X}\sum_{i=1}^n \ds^2(T_i(z),z)\,\d \bar{\mu}(z)\\
			&\overset{*}=\min_{\nu\in\pr(X)}\sum_{i=1}^n W_2^2(\mu_i,\nu)\overset{\eqref{536}}\leq \inf_{\pi\in \Pi}\int_{X^n}c(x_1,\dots,x_n)
			\,\d\pi,
		\end{aligned}
	\end{equation}
	where $(*)$ holds since $\bar \mu$ is the Wasserstein barycenter and $$\int_{X} \d^2(T_i(z),z)\,\d \bar{\mu}(z)= W_2^2(\mu_i,\bar{\mu}),~~~i=1,2...,n.$$
	Therefore, the inequalities  above are all equalities. In particular,
	\begin{equation}
		\inf_{\pi\in\Pi}\int_{X^n}c(x_1,\dots,x_n)\,\d\pi=\int_{X}c(x_1,T_2\circ T_1^{-1}(x_1),\dots,T_n\circ T_1^{-1}(x_1))\,\d\mu_1.
	\end{equation}
	This means,  any optimal transport plan $\pi$ is induced by $(T_2\circ T_1^{-1},\dots,T_n\circ T_1^{-1})$  is a multi-marginal optimal transport map.
Furthermore, from the second inequality in \eqref{8.4}, we can see that $T^{-1}_1(x_1)$ is a barycenter of $\frac 1n \sum_i \delta_{T_i \circ T_1^{-1}(x_1)}$.
\medskip

	Next, we  will prove the uniqueness of  the multi-marginal optimal transport map.  By \eqref{8.4}, we have
	\begin{equation}
		\inf_{\pi\in\Pi}\int_{X^n}c(x_1,\dots,x_n)\,\d\pi=\int_{X}\sum_{i=1}^n \ds^2(T_i(z),z)\,\d \bar{\mu}(z).
	\end{equation}
Assume,  for sake of contradiction, there exists    is  a multi-marginal optimal transport map $(F_2,\dots,F_n)$, which is distinct with $(T_2\circ T_1^{-1},\dots,T_n\circ T_1^{-1})$.

	Denote $F_1(x_1)=x_1$.  By Lemma  \ref{measurable selection} and the first part of the proof, there exists a measurable  map $S: X \to X$ satisfying
	\begin{equation}
		c(x_1,F_2(x_1),\dots,F_n(x_1)) :=\inf_{y\in X}\sum_{i=1}^n \ds^2(F_i(x_1),y)=\sum_{i=1}^n \ds^2(F_i(x_1),S(x_1)).
	\end{equation}

	Then we have 
	\begin{equation}
		\begin{aligned}
					&	\int_{X}c(x_1,F_2(x_1),\dots, F_n(x_1))\,\d\mu_1(x_1)\\=&\int_{X}\sum_{i=1}^n \ds^2(F_i(x_1),S(x_1))\d {\mu_1}(x_1)\\
					\geq& \sum_{i=1}^n W_2^2(\mu_i,S_\sharp \mu_1) \\
						\overset{\eqref{5.5}}\geq &\inf_{\pi\in\Pi}\int_{X^n}c(x_1,\dots,x_n)\,\d\pi.
					\end{aligned}
	\end{equation}
By optimality of $(F_2,\dots,F_n)$,  we know inequalities above are equalities.   By uniqueness of the Wasserstein barycenter we know $S_\sharp \mu_1=\bar \mu$.   By  uniqueness of the optimal transport maps from $\bar \mu$ to $\mu_i$,  we know $S=T_1^{-1}$ and $F_i=T_i\circ T_1^{-1}$ which is the contradiction.
	
	
\end{proof}

Next, we will prove our main theorem in this subsection,   about the unique resolvability  of the  multi-marginal optimal transport problem of Monge type, for absolute continuous marginals,  \emph{in full generality}.

\begin{theorem}[Existence and uniqueness of multi-marginal optimal transport map]\label{MOT}
	Let $(X,\ds, \mm)$ be a metric measure space. Then the multi-marginal optimal transport problem of Monge type, associated with the cost function $c(x_1,\dots,x_n)$,   has a unique solution,  if one of the following conditions holds:
	\begin{itemize}
	\item [{\bf A.}] $(X,\ds,\mm)$ is an ${\rm RCD}(K,N)$ space, and $\mu_1\ll \mm$;
	
	\item [{\bf B.}] $(X,\ds,\mm)$ is an ${\rm RCD}(K,\infty)$ space, and $\mu_i\ll \mm,i=1,\dots,n$.
	\end{itemize}
	In particular, any multi-marginal  optimal transport plan $\pi$ is concentrated on the graph of a $X^{n-1}$-valued map, and for $\pi$-a.e. ${\bf x}:=(x_1, \dots, x_n)\in X^n$,  the measure $\frac 1n \sum_{i=1}^n \delta_{x_i}$  has a unique barycenter in $X$.
\end{theorem}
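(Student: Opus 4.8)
The plan is to observe that Theorem~\ref{MOT} differs from Proposition~\ref{finite entropy MOT} only by weakening the finite-entropy hypothesis ${\rm Ent}_\mm(\mu_i)<\infty$ to absolute continuity $\mu_i\ll\mm$. Going through the proof of Proposition~\ref{finite entropy MOT}, finite entropy was used only (i) to produce, via Proposition~\ref{finite support}, a unique $\mm$-absolutely continuous Wasserstein barycenter $\bar\mu$ of $\tfrac1n\sum_i\delta_{\mu_i}$, and (ii) to invoke the existence and uniqueness of the optimal maps $T_i\colon\bar\mu\to\mu_i$ and $T_1^{-1}\colon\mu_1\to\bar\mu$ of \cite{cavalletti2017optimal,gigli2016optimal}. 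The maps in (ii) require only that their source be absolutely continuous, with no entropy condition; hence, once (i) is secured under the weaker hypothesis, the construction of $(T_2\circ T_1^{-1},\dots,T_n\circ T_1^{-1})$ and the contradiction argument for its uniqueness transfer verbatim. Thus everything reduces to the claim that, under {\bf A} or {\bf B}, $\tfrac1n\sum_i\delta_{\mu_i}$ has a unique barycenter $\bar\mu$ with $\bar\mu\ll\mm$ (this is Theorem~\ref{absolutely continuous}), which I would isolate and prove first.

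Uniqueness of $\bar\mu$ follows from Proposition~\ref{uniqueness in finite space}: $\rcdkn$ and $\rcd$ spaces enjoy the weak Monge property \cite{cavalletti2017optimal,gigli2016optimal}, and since $\mu_1\ll\mm$ we have $\Omega(\pr_{ac}\ms)\ge\tfrac1n>0$ for $\Omega=\tfrac1n\sum_i\delta_{\mu_i}$, so a barycenter is automatically unique. For existence in case~{\bf A}, $\rcdkn$ spaces are proper, hence Proposition~\ref{proper space} applies directly. For case~{\bf B} I would proceed by approximation: truncating the densities $\rho_i=\d\mu_i/\d\mm$ along their level sets gives finite-entropy approximants $\mu_i^k\to\mu_i$ in $W_2$, to which Proposition~\ref{finite support} associates absolutely continuous barycenters $\bar\mu^k$; one then extracts a limit and checks, using \eqref{eq:mmg1} and the lower semicontinuity of $\nu\mapsto\int W_2^2(\mu,\nu)\,\d\Omega(\mu)$, that it is a barycenter of $\tfrac1n\sum_i\delta_{\mu_i}$. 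The tightness required to extract this limit will come from the density bounds described next.

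The core difficulty, and the main obstacle, is to prove $\bar\mu\ll\mm$ from absolute continuity of the marginals alone, since weak limits of absolutely continuous measures need not be absolutely continuous. My plan is to transplant the density-propagation technique for $W_2$-geodesics in essentially non-branching spaces, due to Rajala and Rajala--Sturm \cite{R-I,R-IG,RS-N,gigli2016optimal}, to the barycenter problem. I would first treat bounded densities: if the relevant marginals have densities bounded by a constant, then propagating this bound along the geodesics that join $\bar\mu$ to each marginal inside the multi-marginal coupling furnished by Theorem~\ref{th:sp}, and invoking the $\cdkn$ distortion estimates (in case~{\bf A}, where a single absolutely continuous endpoint suffices because of the $\mcpkn$-type Jacobian lower bound for the barycenter map) or the $\cd$ density bound of Rajala (in case~{\bf B}, where all endpoints must be absolutely continuous), produces an $L^\infty$ bound on the density of $\bar\mu$. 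To pass to general densities I would run the Rajala--Sturm patching argument: decompose $\mu_1$ (resp.\ each $\mu_i$) along the level sets $\{k-1\le\rho_i<k\}$, note that essential non-branching forces the corresponding portions of the barycenter to be carried by essentially disjoint bundles of geodesics, apply the bounded-density estimate to each portion, and sum. This yields $\bar\mu\ll\mm$ in general and, at the same time, supplies the tightness needed to finish the existence step of case~{\bf B}.

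With $\bar\mu$ unique and absolutely continuous, the maps $T_i\colon\bar\mu\to\mu_i$ and $T_1^{-1}\colon\mu_1\to\bar\mu$ exist and are unique, and reproducing the chain of (in)equalities \eqref{5.5} from the proof of Proposition~\ref{finite entropy MOT} shows that $(T_2\circ T_1^{-1},\dots,T_n\circ T_1^{-1})$ solves the Monge problem, that every optimal plan is concentrated on its graph, and that for $\pi$-a.e.\ $(x_1,\dots,x_n)$ the measure $\tfrac1n\sum_i\delta_{x_i}$ has a unique barycenter; uniqueness of the Monge solution follows from the same contradiction argument. I expect the patching-and-propagation step of the third paragraph to be the only genuinely new ingredient.
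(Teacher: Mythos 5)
Your reduction inverts the paper's logical order, and the step you yourself flag as ``the only genuinely new ingredient'' is exactly where the proposal breaks down. In the paper, Theorem \ref{absolutely continuous} is a \emph{consequence} of Theorem \ref{MOT}, not an input to it: Theorem \ref{MOT} is proved directly from Proposition \ref{finite entropy MOT} by a restriction argument on the optimal \emph{plan}. Concretely: if some optimal plan $\pi$ were not graph-induced, there would be a set $E\subset X^n$ of positive $\pi$-measure none of whose positive-measure subsets lies in a graph; writing $\mu_1=\rho_1\mm$, for some $C>0$ the set $\tilde E:=E\cap\{C^{-1}\le\rho_1(x_1)\le C\}$ still has positive measure; the normalized restriction $\tilde\pi:=\pi(\tilde E)^{-1}\,\pi\llcorner_{\tilde E}$ is again optimal (restrictions of optimal plans are optimal, by optimality of $\pi$ and linearity of the Kantorovich problem), and its first marginal has density bounded above by $C/\pi(\tilde E)$ and is supported in $\{\rho_1\ge C^{-1}\}$, a set of $\mm$-measure at most $C$, hence it has finite entropy; so Proposition \ref{finite entropy MOT} applies to $\tilde\pi$ and forces it to be concentrated on a graph --- contradicting the choice of $E$. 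Case {\bf B} is handled by the same truncation applied to each marginal in turn. This trick makes the passage from finite entropy to mere absolute continuity essentially free: no regularity theory for barycenters is needed, and the absolute continuity of the barycenter (Theorem \ref{absolutely continuous}) is then deduced \emph{afterwards} from Theorem \ref{MOT}.

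Your route instead requires proving Theorem \ref{absolutely continuous} first, by transplanting Rajala--Sturm density propagation to barycenters, and this is a genuine gap rather than a routine adaptation. First, for $n\ge 3$ the barycenter does not sit on a single Wasserstein geodesic, and no CD-type inequality controlling \emph{densities} (rather than entropies) of barycenters is available; the Jensen inequality of this paper controls only entropy, and only when the marginals have finite entropy. Second, Rajala's excess-mass technique exploits the freedom to \emph{select} a good interpolation among many admissible midpoints/geodesics; here the barycenter is unique as soon as one marginal is absolutely continuous, so there is no selection to make, and a selection-free substitute would require precisely the density bound you are trying to establish. Third, in case {\bf A} you invoke an ``${\rm MCP}$-type Jacobian lower bound for the barycenter map'', but that map only exists once the multi-marginal optimal maps do --- i.e., once Theorem \ref{MOT} is proved, which is circular --- and such Jacobian estimates are exactly what is missing in non-smooth ${\rm RCD}$ spaces: in Kim--Pass and Jiang they rest on the smooth/sectional-curvature structure this paper is designed to avoid. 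Fourth, your tightness argument for existence in case {\bf B} rests on these unproven bounds. The outer frame of your argument (uniqueness via strict convexity; the final transfer of the construction $(T_2\circ T_1^{-1},\dots,T_n\circ T_1^{-1})$ once a regular barycenter is in hand) is sound, but the central step is an open problem, and the restriction-of-the-plan argument above is the idea you are missing.
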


\begin{proof}
	We will prove the existence by contradiction, the uniqueness can be proved in the same way as Proposition \ref{finite entropy MOT}.
	
	Assume  there exists a multi-marginal optimal plan $\pi$, which is not induced by a multi-marginal  transport map. Then there is  $E\subset X^n$ with positive $\pi$-measure, such that any subset $E'\subset E$ with positive measure is not included in the graph of a map from $X$ to $X^{n-1}$.
	
{For  $\rcdkn$ spaces,}  assume $\mu_1=\rho_1 \mm\ll \mm$, the union $\bigcup_{C> 0}\{x_1\in X: {C}^{-1}\leq \rho_1(x_1)\leq C\}$ has full $\mu_1$-measure. Therefore, there exists some $C>0$, such that the set $\tilde E:=\{(x_1,\dots, x_n)\in E: {C}^{-1}\leq\rho_1(x_1)\leq C\}$ has positive measure. Consider the plan $\tilde \pi:= \frac 1 {\pi(\tilde E)}\pi \llcorner_{\tilde E}$. By optimality of $\pi$ and the linearity  of the Kantorovich problem,  we can see that $\tilde \pi$ is also a multi-marginal optimal transport plan.
	By Proposition \ref{finite entropy MOT} (\romannumeral1),  $\tilde \pi$ is concentrated on the graph of a $X^{n-1}$-valued map,  which contradicts to the choice of $E$.
	
	{Concerning $\rcd$ spaces,   by induction and a similar truncation argument as above, we can reduce the problem to a multi-marginal optimal transport problem with  marginal measures having finite entropy, then by Proposition \ref{finite entropy MOT} (\romannumeral2)  we get a contradiction.}
\end{proof}

Using  Theorem \ref{MOT}, we can remove the   finite entropy condition from the hypothesis of Proposition \ref{finite support}.

\begin{theorem}\label{absolutely continuous}
	Let $(X,\ds,\mm)$ be a metric measure space. Assume $\mu_1,\dots,\mu_n\in \pr_2(X, \ds)$, then       $\frac 1n \sum_{i=1}^n \delta_{\mu_i}$  has a unique Wasserstein barycenter $\bar{\mu}$ and  $\bar{\mu} \ll \mm$,  if one of the following conditions holds:
	\begin{itemize}
	\item [{\bf A.}] $(X,\ds,\mm)$ is an ${\rm RCD}(K,N)$ space, and $\mu_1\ll \mm$;
	\item [{\bf B.}]  $(X,\ds,\mm)$ is an ${\rm RCD}(K,\infty)$ space, and $\mu_i\ll \mm,i=1,\dots,n$.
	\end{itemize}
\end{theorem}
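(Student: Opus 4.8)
The plan is to reduce everything to the multi-marginal optimal transport theory already established in Theorem~\ref{MOT}, and then to remove the finite-entropy hypothesis of Proposition~\ref{finite support} by a truncation argument. Set $\Omega=\frac1n\sum_{i=1}^n\delta_{\mu_i}$ and let $c(x_1,\dots,x_n)=\inf_{y}\sum_i\ds^2(x_i,y)$. First I would settle existence and uniqueness. By Theorem~\ref{MOT} (under hypothesis A or B) the multi-marginal problem for $(\mu_1,\dots,\mu_n)$ admits a unique optimal plan $\pi$, concentrated on the graph of a map, with $\frac1n\sum_i\delta_{x_i}$ possessing a unique barycenter for $\pi$-a.e.\ $\mathbf x$. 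Fixing a measurable barycenter-selection map $T$ and putting $\bar\mu:=T_\sharp\pi$, the estimate \eqref{5.3} from the proof of Theorem~\ref{barycenter space}, together with the identity $\min_{\nu}\sum_iW_2^2(\mu_i,\nu)=\min_{\pi}\int c\,\d\pi$ (the inequality ``$\leq$'' is \eqref{536}, while ``$\geq$'' is the gluing computation \eqref{5.5}), shows that $\bar\mu$ realizes the minimum, i.e.\ it is a Wasserstein barycenter. Uniqueness then follows from Proposition~\ref{uniqueness in finite space}, since RCD spaces are Monge spaces (by \cite{cavalletti2017optimal} in case A and \cite{gigli2016optimal} in case B) and $\Omega$ charges $\pr_{ac}\ms$ (mass at least $1/n$ in case A, mass $1$ in case B).

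For the absolute continuity — which is the genuinely new content, as the entropy–Jensen route of Theorem~\ref{WJI} and Corollary~\ref{entropy finite EVI -k,N} is unavailable when $\mathrm{Ent}_\mm(\mu_i)=+\infty$ — I would truncate the optimal plan into a region where the relevant marginals have bounded density. In case A, writing $\mu_1=\rho_1\mm$, I set $A_C=\{C^{-1}\le\rho_1\le C\}$ and restrict: $\pi^C:=\pi(A_C\times X^{n-1})^{-1}\,\pi\llcorner_{A_C\times X^{n-1}}$, whose first marginal $\mu_1^C$ then has bounded density and hence finite entropy. In case B, using $\mu_i=\rho_i\mm$ for all $i$, I instead set $\tilde E_C=\{\mathbf x: C^{-1}\le\rho_i(x_i)\le C,\ \forall i\}$ and $\pi^C:=\pi(\tilde E_C)^{-1}\,\pi\llcorner_{\tilde E_C}$, so that every marginal $\mu_i^C=(\theta_i)_\sharp\pi^C$ has density bounded by $C/\pi(\tilde E_C)$, again of finite entropy. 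In both cases the restricted plan $\pi^C$ is still optimal for its own marginals — optimality is preserved under restriction exactly as in the truncation step of the proof of Theorem~\ref{MOT} — so Proposition~\ref{finite support} applies to the truncated family and yields that $\bar\mu^C:=T_\sharp\pi^C$ is absolutely continuous with respect to $\mm$.

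It then remains to pass to the limit. Since $\pi^C$ is a normalized restriction of $\pi$ using the same selection map $T$, the measure $T_\sharp(\pi\llcorner_{\cdot})$ equals the corresponding normalizing mass times $\bar\mu^C$, hence is absolutely continuous; and as $C\to\infty$ the truncation regions increase to a set of full $\pi$-measure, so $T_\sharp(\pi\llcorner_{\cdot})$ increases to $T_\sharp\pi=\bar\mu$. Given any $\mm$-null Borel set $B$, each truncated pushforward assigns zero mass to $B$, whence $\bar\mu(B)=0$ by monotone convergence, i.e.\ $\bar\mu\ll\mm$. The step I expect to require the most care is verifying that $\pi^C$ is genuinely optimal for its truncated marginals — so that Proposition~\ref{finite support} identifies $\bar\mu^C$ with $T_\sharp\pi^C$ rather than with some unrelated barycenter — and that a single measurable barycenter-selection $T$ may be used consistently across all truncations (which is legitimate precisely because the pointwise barycenter is unique $\pi$-a.e.\ by Theorem~\ref{MOT}, and $\pi^C\ll\pi$).
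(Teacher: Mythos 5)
Your existence step coincides with the paper's (Theorem \ref{MOT} plus the computation in Theorem \ref{barycenter space}), and your absolute-continuity argument --- restricting the unique optimal plan $\pi$ to bounded-density regions, applying Proposition \ref{finite support} to the truncated marginals, identifying the truncated barycenter with $T_\sharp\pi^C$, and passing to the limit by monotone convergence --- is a legitimate variant of the paper's truncation. The paper instead argues by contradiction, pulling an $\mm$-null set $F$ back through the optimal map $S:\mu_1\to\bar\mu$ and restricting $\mu_1$ to a bounded-density piece of $S^{-1}(F)$, but the mechanism (reduce to Proposition \ref{finite support}) is the same, and your version has the advantage of treating cases {\bf A} and {\bf B} uniformly and of not needing the map $S$ at all.

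The genuine gap is your uniqueness step in case {\bf B}. Proposition \ref{uniqueness in finite space} requires the \emph{one-sided} Monge property: solvability of the Monge problem, with every optimal plan induced by a map, whenever just \emph{one} marginal is absolutely continuous. This holds in ${\rm RCD}(K,N)$ by \cite{cavalletti2017optimal}, so case {\bf A} is fine; but in ${\rm RCD}(K,\infty)$ only the \emph{weak} Monge property is known --- optimal maps exist when \emph{both} marginals are absolutely continuous (this is exactly the hypothesis of Theorem \ref{uniqueness in infinite space}, the relevant result being \cite{RS-N}; your citation \cite{gigli2016optimal} concerns finite-dimensional spaces and does not cover ${\rm RCD}(K,\infty)$). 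Concretely, the strict-convexity argument behind Proposition \ref{uniqueness in finite space} needs optimal maps from the $\mu_i$ to each of the two competing barycenters $\bar\mu_1,\bar\mu_2$; with only the weak Monge property this presupposes that \emph{every} barycenter is absolutely continuous, while your truncation proves absolute continuity only of the particular barycenter $T_\sharp\pi$. A second, possibly singular, barycenter is therefore not excluded, so in case {\bf B} you have shown existence of an absolutely continuous barycenter but not uniqueness. The gap can be closed with tools you already invoke: by the super-position theorem (Theorem \ref{th:sp}), any barycenter $\mu$ yields a coupling $\eta\in\Pi(\mu_1,\dots,\mu_n,\mu)$ whose projection onto the first $n$ factors is an optimal multi-marginal plan and whose last coordinate is $\eta$-a.e.\ a pointwise barycenter of the first $n$; the uniqueness clauses of Theorem \ref{MOT} (uniqueness of the optimal plan and $\pi$-a.e.\ uniqueness of the pointwise barycenter) then force $\mu=T_\sharp\pi$, giving uniqueness in both cases without any Monge property.
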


\begin{proof}
 Consider the multi-marginal optimal transport problem associated with  measures $\mu_1, \dots, \mu_n$. By Theorem \ref{MOT}, there exists a unique multi-marginal optimal transport map $(T_2,\dots,T_n)$,  such that  for $\mu_1$-a.e. $x\in X$,  the measure $\frac 1n (\sum_{i=2}^n \delta_{T_i(x)}+\delta_x)$  has a unique barycenter in $X$. Then by Theorem \ref{barycenter space} we know 
 $\frac 1n \sum_{i=1}^n \delta_{\mu_i}$ has a barycenter in the Wasserstein space.
 
  Now, denote by $\bar{\mu}$ the Wasserstein barycenter.  It is sufficient to prove  $\bar{\mu}\ll\mm$, then by strict convexity of the Wasserstein distance with respect to the linear interpolation (cf. Proposition \ref{uniqueness in finite space} and Theorem \ref{uniqueness in infinite space}), we can  prove the uniqueness of the Wasserstein barycenter.

{For $\rcdkn$ spaces, } denote $\mu_1=\rho_1 \mm$.
	Let $ F\subseteq X$ be such that $\mm(F)=0$. Assume by contradiction that $\bar{\mu}(F)>0$.   Denoted by $S$ the unique optimal transport map from $\mu_1$ to $\bar{\mu}$.  For $E=S^{-1}(F)$,   we have $\bar \mu(F)=\mu_1(E)>0$.
So there exists $ E^1 \subset E$ so that $\mu_1(E^1)>0$ and $\|\rho_1\|_{L^\infty(E^1, \mm)}<\infty$.
Set $\nu_1=\frac 1{\mu_1(E^1)} \mu_1\llcorner_ {E^1}, \nu_k={T_k}_{\sharp} \nu_1$, $k=2,\dots,n$. By construction, $\nu_1,\dots,\nu_n\in \pr_2(X,\ds)$ and ${\rm Ent}_\mm(\nu_1)<\infty$.  By Proposition \ref{finite support},     $\frac 1n \sum_{i=1}^n \delta_{\nu_i}$ has a unique  Wasserstein barycenter $\bar{\nu}\in \pr(X)$ and $\bar{\nu}\ll \mm$,  so  $\bar{\nu}(F)=0$.  However,  by Theorem \ref{barycenter space}  and the uniqueness of the Wasserstein barycenter,  we  also have $\bar{\nu}(F)=1$ which is a contradiction. Thus $\bar{\mu}(F)=0$, and by arbitrariness of $F$,  we have ${\mu}\ll \mm$.
	
	{Concerning $\rcd$ spaces,   by induction and a similar truncation argument as above, we can reduce the problem to a Wasserstein barycenter problem  associate with   measures having finite entropy,  then by Proposition \ref{finite support} we can prove the assertion.} 
\end{proof}

\section{${\rm BCD}$ condition}\label{bcd}
\subsection{Definition}
 There is no doubt that  curvature is one of the most important  concepts in  geometry. In particular, in the study of    metric  geometry, it has been a long history to give a synthetic notion of upper and lower curvature bounds.  Metric spaces with sectional curvature  lower bound, known as Alexandrov spaces, has been widely studied and gain great achievement  in the last century.  About twenty years ago, Sturm and Lott--Villani independently introduced a  synthetic notion of lower Ricci curvature bounds, called Lott--Sturm--Villani curvature--dimension condition today, in the general framework of   metric measure spaces.  This Lott--Sturm--Villani curvature-dimension condition is defined  in terms of McCann's displacement convexity,  of certain functionals on the Wasserstein space. In his celebrated paper  \cite{mccann1997convexity}, McCann  introduced  the notion of displacement convexity,   on  the Wasserstein space over $\mathbb{R}^n$. Later,  Cordero-Erausquin, McCann and Schmuckenschl{\"a}ger \cite{CMS01} extended this result to Riemannian manifolds with Ricci curvature lower bound.  Conversely,  von Renesse and Sturm \cite{SVR-T} proved that the displacement convexity actually implies lower Ricci curvature bounds. Thus, in the setting of Riemannian manifolds, lower Ricci curvature bounds can be characterized by the displacement convexity of certain functionals. This fact is actually one of the main motivations  of Lott--Sturm--Villani's theory.

In this section,  based on the barycenter convexity  of certain functionals on the Wasserstein spaces,  proved in the last Section \ref{main},  we introduce a new curvature-dimension condition, called {{B}arycenter-{C}urvature-{D}imension condition}.

Motivated by Theorem \ref{WJI}, we introduce a dimension-free, synthetic,   curvature-dimension condition for \emph{extended metric measure  spaces}:

\begin{definition}[${\rm BCD}(K,\infty)$  condition]\label{def:bcd}
	
	Let $K \in \R$.  We say that an {extended metric measure  space} $(X,\ds,\mm)$ verifies ${\rm BCD}(K,\infty)$ condition,   if for any  probability measure $\Omega\in \pr_2(\pr(X), W_2)$ with finitely support, there {exists} a barycenter $\bar{\mu}$ of  $\Omega$ such that the following Jensen's  inequality holds:
	\begin{equation}\label{eq:defBCD infty}
	{\rm Ent}_{\mm}(\bar{\mu})\leq\int_{\mathcal{P}(X)}{\rm Ent}_{\mm}(\mu)\,\d\Omega(\mu)-\frac{K}{2}{\rm Var}(\Omega).
	\end{equation}

\end{definition}
\begin{remark}
If $(X, \ds)$ is  geodesic,  taking $\Omega=(1-t)\delta_{\mu_0}+t\delta_{\mu_1}$ in \eqref{eq:defBCD infty} we get the displacement convexity.  So in this case,  BCD condition implies Lott--Sturm--Villani's curvature-dimension condition. 
However,  as we have seen,   many examples of {\rm BCD} spaces are not geodesic.  It is also interesting to find more {\rm BCD} space  which is not geodesic.
\end{remark}

\bigskip

Motivated by Theorem \ref{JIEVI -K,N}, we can also define a finite dimensional variant of barycenter curvature-dimension condition:

\begin{definition}[${\rm BCD}(K,N)$  condition]\label{def:bcdkn}
	
	Let $K \in \R, N>0$.  We say that a   {metric measure  space}  $(X,\ds,\mm)$ verifies ${\rm BCD}(K,N)$ condition,   if for any  probability measure $\Omega\in \pr_2(\pr(X), W_2)$ with finitely support, there exists a barycenter $\bar{\mu}$ of  $\Omega$ such that the following Jensen-type  inequality holds:
\begin{equation}\label{eq:defbCDKN}
\int \frac{W_2(\bar{\mu},\mu)}{s_{K/N}(W_2(\bar{\mu},\mu))}U_N(\mu)\,\d\Omega(\mu)\leq  U_N(\bar{\mu}) \int \frac{W_2(\bar{\mu},\mu)}{t_{K/N}(W_2(\bar{\mu},\mu))}\,\d\Omega(\mu),
\end{equation}
where $U_N(\mu)=e^{-\frac{{\rm Ent}_{\mm}(\mu)}{N}}$.
\end{definition}

\subsection{Stability in measured Gromov--Hausdorff topology}
Similar to  \cite[Theorem 4.15]{Lott-Villani09}, we can prove the stability of ${\rm BCD}$ condition  in measured Gromov--Hausdorff convergence.    For simplicity,   in this paper we focus on compact ${\rm BCD}(K, \infty)$ metric measure spaces, general cases will be studied in a forthcoming paper.

First of all,   we  recall the definition of  measured  Gromov\textendash  Hausdorff convergence.    It is known  that this convergence   is compatible with a metrizable topology on the space of all compact metric spaces modulo isometries.     We refer the readers to \cite{BBI-A}, \cite{GMS-C}
 and  \cite{V-O}  for equivalent definitions and Gromov's book \cite{Gromovbook}  for more  about  the convergence of non-compact metric measure spaces.

\begin{definition} Given $\epsilon \in (0,1)$. A map $\varphi : (X_1, \ds_1) \to  (X_2,  \ds_2)$ between two metric  spaces is called an $\epsilon$-approximation if
\begin{itemize}
\item[(i)] $\Big |\ds_2\big (\varphi (x), \varphi (y)\big) - \ds_1(x, y)\Big| \leq  \epsilon$, ~~~ $\forall   x, y \in  X_1$;
\item[(ii)]
For all $x_2 \in X_2$, there is an $x_1\in X_1$ so that $\ds_2\big(\varphi(x_1), x_2\big)\leq \epsilon$.
\end{itemize}

\end{definition}

\begin{definition}[Measured Gromov\textendash Hausdorff convergence]\label{def:conv}
We say that a sequence of  metric spaces  $\{(X_i, \ds_i)\}_{i\in \mathbb{N}}$   converges in  Gromov\textendash  Hausdorff sense to $(X, \ds)$ if there exists a sequence  of $\epsilon_i$-approximations   $\varphi_i: X_n \to X$ with $\epsilon_i \downarrow 0$.

We say that a sequence of  metric measure spaces $(X_i, \ds_i, \mm_i)$  converges in  measured  Gromov\textendash  Hausdorff sense (mGH for short) to a  metric measure space $(X,  \ds, \mm)$, if additionally $(\varphi_i)_\sharp \mm_i \to  \mm$ weakly as measures, i.e. 
\[
\lim_{i \to \infty} \int_X g \, \d \big  ((\varphi_i)_\sharp \mm_i\big) = \int_X g \, \d \mm \qquad \forall g \in C_b(X), 
\]
where $C_b(X)$ denotes the set of  bounded continuous functions.
\end{definition}

\medskip

\begin{theorem}\label{measured gromov hausdorff limit}
	Let $K\in\mathbb{R}$ and $\left\{(X_i,  \ds_i, \mm_i)\right\}_{i\in \mathbb{N}}$ be a sequence of compact ${\rm BCD}(K,\infty)$ metric measure spaces converging to $(X, \ds, \mm)$ in the measured Gromov--Hausdorff sense.  Then $(X, \ds, \mm)$ is also a ${\rm BCD}(K,\infty)$ space. 
\end{theorem}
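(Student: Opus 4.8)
The plan is to adapt the classical strategy for stability of curvature-dimension conditions under measured Gromov--Hausdorff convergence (as in \cite[Theorem 4.15]{Lott-Villani09}) to the barycentric formulation. First I would realize the whole sequence in a common compact ambient space: since all the $(X_i,\ds_i,\mm_i)$ are compact and converge in the mGH sense, the standard extrinsic realization (cf. \cite{GMS-C, BBI-A}) provides a compact metric space $(Z,\ds_Z)$ together with isometric embeddings of each $X_i$ and of $X$ into $Z$ such that $X_i\to X$ in the Hausdorff distance of $Z$ and the pushed-forward $\mm_i\to\mm$ weakly in $\pr(Z)$. Because $\ds_Z$ restricts to the intrinsic distances, for measures supported on a single $X_i$ the ambient $W_2^Z$ coincides with $W_2^{X_i}$, and on the compact space $Z$ the distance $W_2^Z$ metrizes weak convergence; these two facts let me compare distances, entropies and barycenters across the sequence.

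Fix $\Omega=\sum_{j=1}^m\lambda_j\delta_{\nu_j}\in\pr_2(\pr(X),W_2)$ with finite support. If $\int {\rm Ent}_{\mm}\,\d\Omega=+\infty$ there is nothing to prove beyond exhibiting a barycenter, and one exists automatically: on the compact $X$ the map $\nu\mapsto\sum_j\lambda_j W_2^2(\nu_j,\nu)$ is $W_2$-continuous, hence weakly continuous, and attains its minimum on the weakly compact set $\pr(X)$. So I may assume every $\nu_j$ has finite entropy. The technical core is an entropy-controlled recovery sequence: for each $j$ I build $\nu_j^i\in\pr(X_i)$ with $\nu_j^i\to\nu_j$ weakly and $\limsup_i {\rm Ent}_{\mm_i}(\nu_j^i)\le {\rm Ent}_{\mm}(\nu_j)$. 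When $\nu_j=\rho_j\mm$ with $\rho_j$ bounded and continuous (extended to $Z$ by Tietze), I set $\nu_j^i:=c_{i,j}^{-1}\,\rho_j\,\mm_i$ with $c_{i,j}:=\int\rho_j\,\d\mm_i\to 1$; then $\nu_j^i\to\nu_j$ weakly and ${\rm Ent}_{\mm_i}(\nu_j^i)\to{\rm Ent}_{\mm}(\nu_j)$ follow from $\mm_i\to\mm$, since $\rho_j$ and $\rho_j\log\rho_j$ are bounded continuous. The general finite-entropy case reduces to this one by approximating $\rho_j$ in the entropy by bounded continuous densities and a diagonal argument.

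Next I apply the hypothesis on each $X_i$ to $\Omega_i:=\sum_j\lambda_j\delta_{\nu_j^i}$ (finite support, and finite variance by compactness), obtaining barycenters $\bar\mu_i\in\pr(X_i)$ with
\[
{\rm Ent}_{\mm_i}(\bar\mu_i)\le \sum_{j=1}^m\lambda_j\,{\rm Ent}_{\mm_i}(\nu_j^i)-\frac K2\,{\rm Var}(\Omega_i).
\]
Viewed in the weakly compact $\pr(Z)$, a subsequence of $\bar\mu_i$ converges weakly to some $\bar\mu$; since $\bar\mu_i(X_i)=1$ and $X_i\to X$ in Hausdorff distance, a Portmanteau argument on the closed neighbourhoods of $X$ gives $\bar\mu(X)=1$, i.e. $\bar\mu\in\pr(X)$. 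I then identify $\bar\mu$ as a barycenter of $\Omega$: continuity of $W_2^Z$ gives ${\rm Var}(\Omega_i)=\sum_j\lambda_j W_2^2(\nu_j^i,\bar\mu_i)\to\sum_j\lambda_j W_2^2(\nu_j,\bar\mu)$, while a near-barycenter of $\Omega$ together with a weak (entropy-free) recovery sequence built by nearest-point projection onto $X_i$ yields $\limsup_i{\rm Var}(\Omega_i)\le{\rm Var}(\Omega)$. Combined with the trivial bound $\sum_j\lambda_j W_2^2(\nu_j,\bar\mu)\ge{\rm Var}(\Omega)$, this forces $\sum_j\lambda_j W_2^2(\nu_j,\bar\mu)={\rm Var}(\Omega)$ and ${\rm Var}(\Omega_i)\to{\rm Var}(\Omega)$.

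Finally I pass to the limit in the displayed inequality. Joint lower semicontinuity of relative entropy under weak convergence of both the measure and the reference measure (via the Donsker--Varadhan dual formula, legitimate since the $\mm_i$ are probabilities) gives ${\rm Ent}_{\mm}(\bar\mu)\le\liminf_i{\rm Ent}_{\mm_i}(\bar\mu_i)$; the recovery property gives $\limsup_i\sum_j\lambda_j{\rm Ent}_{\mm_i}(\nu_j^i)\le\sum_j\lambda_j{\rm Ent}_{\mm}(\nu_j)=\int{\rm Ent}_{\mm}\,\d\Omega$; and ${\rm Var}(\Omega_i)\to{\rm Var}(\Omega)$. Together these yield ${\rm Ent}_{\mm}(\bar\mu)\le\int{\rm Ent}_{\mm}\,\d\Omega-\frac K2{\rm Var}(\Omega)$, so $\bar\mu$ witnesses ${\rm BCD}(K,\infty)$ for $\Omega$. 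The main obstacle is the entropy-controlled recovery sequence (the $\Gamma$-$\limsup$ type construction together with the reduction to continuous densities); by contrast, the lower semicontinuity and the variance-convergence bookkeeping are comparatively routine once the common embedding in $Z$ is in place.
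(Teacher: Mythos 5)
Your proposal is correct, and at its core it is the same proof as the paper's, which likewise adapts Lott--Villani's stability argument: construct recovery measures $\nu_j^i = c_{i,j}^{-1}\rho_j\,\mm_i$ on $X_i$ with convergent entropies, apply the ${\rm BCD}(K,\infty)$ hypothesis on each $X_i$ to get barycenters $\bar\mu_i$ satisfying Jensen's inequality, extract a weak limit, identify it as a barycenter of $\Omega$ by the two-sided variance estimate (${\rm Var}(\Omega)\geq\limsup_i{\rm Var}(\Omega_i)$ from a near-barycenter recovery, versus $\sum_j\lambda_j W_2^2(\nu_j,\bar\mu)\geq{\rm Var}(\Omega)$ trivially), and conclude by lower semicontinuity of relative entropy in both arguments. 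Where you differ is in the technical realization of measured Gromov--Hausdorff convergence: the paper works directly with $\epsilon_i$-approximation maps $f_i:X_i\to X$, $f_i':X\to X_i$ and quotes the needed facts from Lott--Villani --- their Lemma 3.24 for the reduction to bounded continuous densities, convergence of $W_2$ and of entropies under approximations, the data-processing inequality ${\rm Ent}_{(f_i)_\sharp\mm_i}\big((f_i)_\sharp\bar\omega_i\big)\leq{\rm Ent}_{\mm_i}(\bar\omega_i)$, and joint lower semicontinuity of $(\nu,\mu)\mapsto{\rm Ent}_\nu(\mu)$ --- whereas you embed the whole sequence isometrically into a common compact $Z$ and re-derive these ingredients there (Donsker--Varadhan duality for the lower semicontinuity, Portmanteau for $\bar\mu(X)=1$, near-point projections for the variance upper bound, and a diagonal argument in place of the paper's ``we may assume continuous densities'' reduction). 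Your route is more self-contained, avoids the push-forward step entirely since barycenters and reference measures already live in one space, and treats the infinite-entropy case explicitly (which the paper leaves implicit); the price is invoking the standard equivalence, for compact spaces, between the approximation-map and common-embedding formulations of mGH convergence. Both arguments are sound, and the bookkeeping (recovery sequences, variance sandwich, entropy semicontinuity) matches step for step.
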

\begin{proof}
	It is  known that compactness is stable under the Gromov--Hausdorff convergence. Thus, $(X,\ds)$ is  barycenter space.  Let $\Omega\in\mathcal{P}_2(\mathcal{P}(X),W_2)$ be with finite support and let $\bar{\mu}$ be a barycenter of $\Omega$.   By \cite[Lemma 3.24]{Lott-Villani09}, we may assume that  $\Omega=\sum_{k=1}^{m}\lambda_k\delta_{\mu_k}$ with $\sum_{k=1}^{m}\lambda_k=1$ and
	$$
	\mu_k\in \Big\{\gamma\in \mathcal{P}(X):{\rm Ent}_{\mm}(\gamma)<\infty, \gamma \text{ has bounded continuous density} \Big\}.
	$$

	Let  $f_i:X_i\rightarrow X$ and $f_i^{'} :X\rightarrow X_i$ be  $\epsilon_i$-approximations with $\lim_{i\rightarrow\infty}\epsilon_i=0$   and $\lim_{i\rightarrow\infty}(f_i)_{\sharp}\mm_i=\mm$.  Assume  $\mu_k=\rho_k\mm$ with $\rho_k\in C_b(X)$.  For $i$  sufficiently large, we have $\int_{X}\rho_k \,\d(f_i)_{\#}\mm_i>0$. For such $i$, put $\mu_{i,k}=\frac{(f_i^{\sharp}\rho_k)\mm_i}{\int_{X}\rho_k \,\d(f_i)_{\sharp}\mm_i}$ and $\Omega_i=\sum_{k=1}^{m}\lambda_k\delta_{\mu_{i,k}}$,  where $f_i^{\sharp}\rho_k:=\rho_k \circ f_i$ denotes the pull-back of the function $\rho_k$. Let $\bar{\omega}_i$ be a barycenter of $\Omega_i$.  We have the following properties, whose proof  can be found in \cite[Corollary 4.3 and Theorem 4.15]{Lott-Villani09}:
	\begin{enumerate}
	\item [(i)]  for any $\gamma\in \mathcal{P}(X)$, $\lim_{i\rightarrow\infty}W_2(\mu_{i,k},(f_i^{'})_\sharp\gamma)=W_2(\mu_k,\gamma)$;
	\item [(ii)] $\lim_{i\rightarrow\infty}{\rm Ent}_{\mm_{i}}(\mu_{i,k})={\rm Ent}_{\mm}(\mu_k)$;
	\item  [(iii)] ${\rm Ent}_{(f_i)_{\sharp}\mm_i}((f_i)_{\sharp}\bar{\omega}_i)\leq {\rm Ent}_{\mm_i}(\bar{\omega}_i)$;
	\item [(iv)] the functional $(\nu, \mu) \mapsto {\rm Ent}_{\nu}(\mu)$ is lower semi-continuous.
	\end{enumerate}

	We claim that $(f_i)_\sharp \bar{\omega}_i$ converges to a barycenter $\bar{\omega}$ of $\Omega$ up to taking a subsequence. 
	
	By property (i) above,  for any $\gamma\in\mathcal{P}(X)$,
	$$
	\sum_{k=1}^m \lambda_k W_2^2(\mu_k,\gamma)=\lim_{i\rightarrow\infty}\sum_{k=1}^m \lambda_k W_2^2(\mu_{k,i},(f_i^{'})_\sharp\gamma)\geq\limsup_{i\rightarrow\infty} {\rm Var}(\Omega_i).
	$$ 
This implies that ${\rm Var}(\Omega)\geq\limsup_{i\rightarrow\infty} {\rm Var}(\Omega_i)$. 
By taking a subsequence,  we assume that $(f_i)_{\sharp}(\bar{\omega_i})$ converges to  $\bar{\omega} \in \pr(X)$.
Notice that
	$$
	\lim_{i\rightarrow\infty} W_2(\mu_k,(f_i)_\sharp \bar{\omega}_i)= 	\lim_{i\rightarrow\infty} W_2(\mu_{k,i}, \bar{\omega}_i)= W_2(\mu_{k}, \bar{\omega})~~~\forall k=1,2,...,m.
	$$We have 
\[
\begin{aligned}
{\rm Var}(\Omega)&\geq\limsup_{i\rightarrow\infty} {\rm Var}(\Omega_i)=\limsup_{i\rightarrow\infty}\sum_{k=1}^m \lambda_k W_2^2(\mu_k,(f_i)_\sharp \bar{\omega}_i)\\&=\sum_{k=1}^m \lambda_k W_2^2(\mu_k, \bar{\omega})\geq {\rm Var}(\Omega).
\end{aligned}
\]
So $\bar{\omega}$  is a barycenter of $\Omega$.

 In conclusion, we get
	\begin{equation}
	\begin{aligned}
	{\rm Ent}_{\mm}(\bar{\omega})&\overset{\text{(iv)}}{\leq} \liminf_{i\rightarrow\infty}{\rm Ent}_{(f_i)_{\sharp}\mm_i}(f_i)_{\sharp}(\bar{\omega_i})
	\overset{\text{(iii)}}{\leq}\liminf_{i\rightarrow\infty}{\rm Ent}_{\mm_i}(\bar{\omega}_i)\\
	&\overset{\eqref{eq:defBCD infty}}{\leq} \liminf_{i\rightarrow\infty}\int_{\mathcal{P}(X_i)}{\rm Ent}_{\mm_i}(\mu)\,\d\Omega_i(\mu)-\frac{K}{2}{\rm Var}(\Omega_i)\\
	&\overset{\text{(ii)}}{=}\int_{\mathcal{P}(X)}{\rm Ent}_{\mm}(\mu)\,\d\Omega(\mu)-\frac{K}{2}{\rm Var}(\Omega)
	\end{aligned}
	\end{equation}
	which is the thesis.
\end{proof}

\subsection{Wasserstein barycenter in ${\rm BCD}(K,\infty)$ spaces}
In this section, we will prove the existence of Wasserstein barycenter in ${\rm BCD}(K,\infty)$ spaces under mild assumptions.

\begin{theorem}\label{Existence of Wasserstein barycenter in barycenter space}
	Let $(X,\d,\mm)$ be an extended metric measure space satisfying ${\rm BCD}(K,\infty)$ curvature-dimension condition, $\Omega$ be a probability measure on $\pr(X)$ satisfying $${\rm Var}(\Omega)<\infty~~~~\text{and}~~~\int_{\pr(X)}{\rm Ent}_\mm(\mu)\,\d\Omega(\mu)<\infty.$$ Then  $\Omega$ has a barycenter if one of the following conditions holds:
	\begin{itemize}
		\item [{\bf A.}]  $(X, \ds, \mm)$ satisfies the exponential volume growth condition and $\Omega$ is concentrated on $\mathcal{P}_2(X, \ds)$ (cf. Theorem \ref{rcdevi});
		\item [{\bf B.}]  $\mm$ is a probability measure.
	\end{itemize} 
\end{theorem}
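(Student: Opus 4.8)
The plan is to reduce the statement to the finite-support case, where the defining inequality \eqref{eq:defBCD infty} of ${\rm BCD}(K,\infty)$ is available, and then to recover a barycenter of $\Omega$ by a compactness argument driven by entropy bounds, in the spirit of Theorem \ref{th:existenceRCD}. Throughout, write $\W_2$ for the $L^2$-Kantorovich--Wasserstein distance built over the base space $(\pr(X),W_2)$, i.e. the Wasserstein metric on $\pr_2(\ws)=\pr_2(\pr(X),W_2)$. The three ingredients are: a finite-support approximation of $\Omega$ that simultaneously controls $\W_2$ and the entropy integral, the Jensen inequality \eqref{eq:defBCD infty} to obtain uniform entropy bounds on the approximate barycenters, and a narrow-lower-semicontinuity passage to the limit.

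The first and most delicate step is to approximate $\Omega$ by finitely supported $\Omega_n\in\pr_2(\ws)$ with $\W_2(\Omega_n,\Omega)\to0$ and $\limsup_n\int_{\pr(X)}{\rm Ent}_\mm(\mu)\,\d\Omega_n(\mu)\leq\int_{\pr(X)}{\rm Ent}_\mm(\mu)\,\d\Omega(\mu)<\infty$. The point is the choice of representatives. Given a Borel partition $\{A_k\}$ of (the essential part of) $\supp\Omega$ into cells of small $W_2$-diameter, I would not select arbitrary points of $A_k$ but the \emph{linear averages} $\mu_k:=\Omega(A_k)^{-1}\int_{A_k}\mu\,\d\Omega(\mu)\in\pr(X)$, and set $\Omega'=\sum_k\Omega(A_k)\delta_{\mu_k}$. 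Convexity of ${\rm Ent}_\mm$ along linear interpolations together with Jensen's inequality gives ${\rm Ent}_\mm(\mu_k)\leq\Omega(A_k)^{-1}\int_{A_k}{\rm Ent}_\mm\,\d\Omega$, whence $\int{\rm Ent}_\mm\,\d\Omega'\leq\int{\rm Ent}_\mm\,\d\Omega$; while convexity of $\nu\mapsto W_2^2(\mu,\nu)$ (see \eqref{5.18}) yields $W_2^2(\mu,\mu_k)\leq(\diam_{W_2}A_k)^2$ for $\mu\in A_k$, so the obvious coupling gives $\W_2(\Omega',\Omega)\leq\max_k\diam_{W_2}A_k$. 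Since $W_2$-balls need not be totally bounded, a single partition has only countably many cells; I would then truncate by moving the tail mass onto one fixed cell (its $\W_2$-contribution is small because $\Omega$ has finite variance, and its entropy contribution is small because that representative has finite entropy), and a diagonal argument produces the required finitely supported $\Omega_n$. This step, rather than the limiting argument, is where I expect the real work to lie: the linear-averaging device is precisely what lets both the entropy bound and the $\W_2$-estimate survive discretization in the infinite-dimensional setting.

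Next I would apply ${\rm BCD}(K,\infty)$ to each $\Omega_n$: by \eqref{eq:defBCD infty} there is a genuine barycenter $\bar\mu_n$ of $\Omega_n$, so that $\int_{\pr(X)}W_2^2(\mu,\bar\mu_n)\,\d\Omega_n(\mu)={\rm Var}(\Omega_n)$, satisfying ${\rm Ent}_\mm(\bar\mu_n)\leq\int_{\pr(X)}{\rm Ent}_\mm(\mu)\,\d\Omega_n(\mu)-\tfrac K2{\rm Var}(\Omega_n)$. Because $\Omega\mapsto{\rm Var}(\Omega)^{1/2}=\inf_\nu\|W_2(\cdot,\nu)\|_{L^2(\Omega)}$ is an infimum of $\W_2$-$1$-Lipschitz maps and hence itself $\W_2$-$1$-Lipschitz, we get ${\rm Var}(\Omega_n)\to{\rm Var}(\Omega)<\infty$; combined with the uniform control of the entropy integral this forces $\sup_n{\rm Ent}_\mm(\bar\mu_n)<\infty$. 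Under either hypothesis the sublevels of ${\rm Ent}_\mm$ are tight --- in case B because $\mm$ is a Radon probability measure, and in case A because the exponential volume growth condition makes $e^{-c\ds^2(x_0,\cdot)}\mm$ a finite reference measure (cf. \cite[Lemma 4.4]{AGMR-R} and \cite[Theorem 1.2]{FSS10}). Thus, up to a subsequence, $\bar\mu_n$ converges narrowly to some $\bar\mu\in\pr(X)$.

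Finally I would verify that $\bar\mu$ is a barycenter of $\Omega$. Since $\W_2$-convergence implies narrow convergence, both $\Omega_n\to\Omega$ and $\bar\mu_n\to\bar\mu$ narrowly, so the products $\Omega_n\otimes\delta_{\bar\mu_n}$ converge narrowly to $\Omega\otimes\delta_{\bar\mu}$ on $\pr(X)\times\pr(X)$. As $(\mu,\nu)\mapsto W_2^2(\mu,\nu)$ is non-negative and jointly lower semicontinuous for narrow convergence, the Portmanteau inequality yields
\[
\int_{\pr(X)}W_2^2(\mu,\bar\mu)\,\d\Omega(\mu)\leq\liminf_{n\to\infty}\int_{\pr(X)}W_2^2(\mu,\bar\mu_n)\,\d\Omega_n(\mu)=\lim_{n\to\infty}{\rm Var}(\Omega_n)={\rm Var}(\Omega).
\]
Since the reverse inequality $\int_{\pr(X)}W_2^2(\mu,\bar\mu)\,\d\Omega(\mu)\geq{\rm Var}(\Omega)$ holds by the very definition of the variance, $\bar\mu$ attains the infimum and is therefore a barycenter of $\Omega$, which completes the argument.
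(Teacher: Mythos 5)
Your proof has the same three-step architecture as the paper's: (i) approximate $\Omega$ in the quadratic transport distance over $(\pr(X),W_2)$ by finitely supported measures $\Omega_n$ while keeping $\limsup_n\int{\rm Ent}_{\mm}\,\d\Omega_n\leq\int{\rm Ent}_{\mm}\,\d\Omega$; (ii) apply \eqref{eq:defBCD infty} to each $\Omega_n$ to produce barycenters $\bar\mu_n$ with uniformly bounded entropy; (iii) extract a narrow limit and identify it as a barycenter of $\Omega$. Within this scheme your local choices differ from the paper's, and each is legitimate: for (i) the paper reduces to a compact set $\mathsf K$ carrying most of the mass and of the variance integral, covers it by finitely many $W_2$-balls, and picks in each cell a representative whose entropy is at most the cell average (the pigeonhole selection \eqref{6.4}), whereas you take the linear mixture of each cell, for which the same entropy bound holds automatically by convexity of ${\rm Ent}_{\mm}$ under mixtures, and the $W_2$-estimate follows from convexity of $W_2^2(\mu,\cdot)$ (cf. \eqref{5.18}); for (ii) your remark that ${\rm Var}(\cdot)^{1/2}$ is $1$-Lipschitz for the transport distance yields the exact limit ${\rm Var}(\Omega_n)\to{\rm Var}(\Omega)$, sharper than the paper's factor-$2$ bound \eqref{5.15}; for (iii) you reprove the stability directly via joint narrow lower semicontinuity of $W_2^2$ and Portmanteau, while the paper cites \cite[Theorem 3]{le2017existence} --- your version is self-contained and sidesteps the question of whether that theorem applies verbatim to extended metric spaces. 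One caution on (i): partitioning (essentially all of) $\supp\Omega$ into countably many cells of small $W_2$-diameter presupposes $W_2$-separability there, which is not automatic in the extended setting; like the paper, you should first use Radon-ness of $\Omega$ to reduce to a compact set and only then discretize, so your truncation step is not optional but the place where this must be justified.

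The one assertion that is wrong as written is the tightness claim in case A: sublevel sets of ${\rm Ent}_{\mm}$ are \emph{not} tight when $\mm$ is an infinite measure --- on $(\R^n,|\cdot|,\mathcal L^n)$ all translates of a fixed Gaussian have the same entropy and escape to infinity. Finiteness of $e^{-c\ds^2(x_0,\cdot)}\mm$ does not by itself repair this; what is needed (and what the paper's Step 3 does) is to pass to the normalized reference measure $\tilde\mm:=z^{-1}e^{-c\ds^2(x_0,\cdot)}\mm\in\pr(X)$ and use ${\rm Ent}_{\tilde\mm}(\bar\mu_n)={\rm Ent}_{\mm}(\bar\mu_n)+c\int\ds^2(x_0,\cdot)\,\d\bar\mu_n+\ln z$, so that tightness of the sublevels of ${\rm Ent}_{\tilde\mm}$ (valid since $\tilde\mm$ is a probability measure, cf. \cite[Lemma 4.4]{AGMR-R} and \cite[Theorem 1.2]{FSS10}) can be invoked only after one also has a \emph{uniform second-moment bound} on the $\bar\mu_n$. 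That bound is in fact available from your own Step 2: for every $\mu$ one has $\int\ds^2(x_0,\cdot)\,\d\bar\mu_n=W_2^2(\delta_{x_0},\bar\mu_n)\leq 2W_2^2(\delta_{x_0},\mu)+2W_2^2(\mu,\bar\mu_n)$, and averaging in $\mu$ against $\Omega_n$ bounds the left-hand side by $2\int W_2^2(\delta_{x_0},\mu)\,\d\Omega_n(\mu)+2{\rm Var}(\Omega_n)$, both uniformly bounded because in case A $\Omega$ is concentrated on $\pr_2(X,\ds)$, has finite variance, and $\Omega_n\to\Omega$ in the transport distance. So the gap is fixable with ingredients you already have, but the tightness argument must be routed through $\tilde\mm$ exactly as in the paper; in case B your reasoning is correct as stated.
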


\begin{proof}
By \cite[Theorem 6.18]{V-O}, for any $\Omega\in \pr_2(\pr(X), W_2)$,  we can find a sequence of  probability measures $\Omega_j\in \pr_0(\pr(X))$, such that $\Omega_j \to \Omega$  in  Wasserstein distance as $j\to\infty$. By Definition \ref{def:bcd},   $\Omega_j$ admits a Wasserstein barycenter  $\mu_j$. By stability of the  Wasserstein barycenters (cf.  \cite[Theorem 3]{le2017existence}), it is sufficient to show  that $(\mu_j)_{j\in \N}$ has a narrow convergent subsequence. We divide  the proof into three steps.

	\paragraph{Step 1:} We claim there exists a sequence $(\Omega_j )\subset \pr_0(\pr(X))$, such that $\Omega_j \to \Omega$ and $\lmts{j}{\infty}\int _{\pr(X)} {\rm Ent}_\mm(\mu)\, \d\Omega_{j}(\mu)\leq \int _{\pr(X)} {\rm Ent}_\mm(\mu) \,\d\Omega(\mu)$. 
	
	To prove this, we adopt an argument used in\cite[Theorem 6.18]{V-O}.  For any $j\geq 1$,  set $\epsilon=1/j$, and let $\mu_0\in \pr(X)$ be such that ${\rm Ent}_\mm(\mu_0)\leq 1$.  Note that $\Omega$ is Radon,  there exists a compact subset $\mathsf K\subseteq \pr(X)$, such that $\Omega(\pr(X)\backslash \mathsf K)<\epsilon$ and
	$$\int_{\pr(X)\backslash \mathsf K} W_2^2(\mu_0,\mu)\,\d \Omega(\mu)<\epsilon.$$
	Cover $\mathsf K$ by finite many  geodesic balls $B_{\epsilon}(\mu_k),1\leq k\leq N$. Define $$B_k^{\prime}=B_\epsilon(\mu_k)\backslash \bigcup_{j<k}B_\epsilon(\mu_j).$$
	By construction,   $\{B_k^{\prime}\}_{1\leq k\leq N}$ is a disjoint  covering of $\mathsf K$. Without loss of generality, we assume that $\Omega(B_k^{\prime}\cap \mathsf K)>0$ for any $k$.  Note that for any $B_k^{\prime}\cap \mathsf K$, there exists $\mu_k^{\prime}\in B_k^{\prime}\cap \mathsf K$, such that 
	\begin{equation}\label{6.4}
		{\rm Ent}_\mm(\mu_k^{\prime})\leq \frac{1}{\Omega(B_k^{\prime}\cap \mathsf K)} \int_{B_k^{\prime}\cap \mathsf K} {\rm Ent}_\mm(\mu)\,\d \Omega(\mu).
	\end{equation}
Define  $f_j:\pr(X) \to \pr(X)$ by
	\begin{equation}
		f_j(B_k^{\prime}\cap \mathsf K)=\{\mu_k^{\prime}\},  \quad f(\pr(X)\backslash \mathsf K)=\{\mu_0\}.
	\end{equation}
	Then, for any $\mu\in \mathsf K$,  there is $\mu'_k\in \mathsf K$ such that $W_2(\mu, f_j(\mu))=W_2(\mu, \mu'_k)\leq \epsilon$. So
\begin{equation}\label{5.24}
		\begin{aligned}
			\int_{\pr(X)} W_2^2(\mu,f_j(\mu))\,\d \Omega(\mu)
			&=\int_{\mathsf K} W_2^2(\mu,f_j(\mu))\,\d \Omega(\mu)+\int_{\pr(X)\backslash \mathsf K}W_2^2(\mu,f_j(\mu))\,\d \Omega(\mu)\\
			&\leq \epsilon^2\int_{\mathsf K}\, \d\Omega(\mu)+\int_{\pr(X)\setminus \mathsf K}W_2^2(\mu,\mu_0)\,\d \Omega(\mu)\\
			&\leq \epsilon^2+\epsilon.
		\end{aligned}
	\end{equation}
	This implies $W_2(\Omega, (f_j)_{\sharp}\Omega)\leq \epsilon^2+\epsilon$, where $(f_j)_{\sharp}\Omega\in \pr_0(\pr(X))$. Moreover, note that 
	\begin{equation}
		\begin{aligned}
			\int _{\pr(X)} {\rm Ent}_\mm(\mu)\, \d (f_j)_{\sharp}\Omega(\mu)
			&=\int _{\mathsf K}  {\rm Ent}_\mm(f_j(\mu)) \,\d \Omega(\mu)+\int _{\pr(X)\backslash \mathsf K}  {\rm Ent}_\mm(f_j(\mu)) \,\d \Omega(\mu)\\
			&= \sum_{k=1}^N \int _{B_k^{\prime}\cap \mathsf K}  {\rm Ent}_\mm(f_j(\mu)) \,\d \Omega(\mu)+\int _{\pr(X)\backslash \mathsf K}  {\rm Ent}_\mm(f_j(\mu))\, \d \Omega(\mu)\\
			&=\sum_{k=1}^N \int _{B_k^{\prime}\cap \mathsf K}  {\rm Ent}_\mm(\mu_k^{\prime}) \,\d \Omega(\mu)+\int _{\pr(X)\backslash \mathsf K}  {\rm Ent}_\mm(\mu_0) \,\d \Omega(\mu)\\
			&\overset{\eqref{6.4}}{\leq }\sum_{k=1}^N  \int_{B_k^{\prime}\cap \mathsf K}  {\rm Ent}_\mm(\mu)\,\d \Omega(\mu)+\int _{\pr(X)\backslash \mathsf K}  {\rm Ent}_\mm(\mu_0)\, \d \Omega(\mu)\\
			&\leq \int _{\mathsf K}  {\rm Ent}_\mm(\mu) \,\d\Omega(\mu)+\epsilon.
		\end{aligned}
	\end{equation}
Define $\Omega_j=(f_j)_{\sharp}\Omega$.  As $j\to \infty$, we  have $\Omega_j\to\Omega$.  Letting $j\to \infty$  and  $\mathsf K\to \pr(X)$,  we get $$\lmts{j}{\infty}\int _{\pr(X)}  {\rm Ent}_\mm(\mu) \,\d\Omega_{j}(\mu)\leq \int _{\pr(X)}  {\rm Ent}_\mm(\mu)\, \d\Omega(\mu).$$
	
	\paragraph{Step 2:}  Since  $\Omega_j$ is concentrated on finite number of probability measures, by Definition  \ref{def:bcd},  $\Omega_j$ admits a Wasserstein barycenter $\mu_j$, such that
	$$\int_{\pr(X)} W_2^2(\mu,\mu_j)\,\d\Omega_j(\mu)=\min_{\nu\in \pr(X)}\int_{\pr(X)} W_2^2(\mu, \nu)\,\d \Omega_j(\mu).$$
	Notice that  ${\rm Var}(\Omega) <+\infty$, so  there is $\nu\in \pr(X)$ such that
\[
\int_{\pr(X)} W_2^2(\nu,\mu)\,\d\Omega(\mu)<+\infty.
\]
	 {\bf Claim}: it holds the following estimate
	\begin{equation}
	\lmts{j}{\infty}	\int_{\pr(X)} W_2^2(\mu,\mu_j)\,\d\Omega_j(\mu)\leq 2 \int_{\pr(X)} W_2^2(\nu,\mu)\,\d \Omega(\mu)<+\infty.
	\end{equation}
	
	Note that by triangle inequality, for any $\mu\in \pr(X)$,
	\begin{equation}\label{5.14}
		W_2^2(\nu,f_j(\mu))\leq 2\big(W_2^2(\nu,\mu)+W_2^2(\mu,f_j(\mu))\big).
	\end{equation}
	Integrating \eqref{5.14} with respect to $\Omega$, we obtain
	\begin{equation}\label{5.15}
		\begin{aligned}
			&\int_{\pr(X)} W_2^2({\mu_j},\mu)\,\d\Omega_j(\mu)\\
			&\leq \int_{\pr(X)} W_2^2({\nu},\mu)\,\d\Omega_j(\mu)=\int_{\pr(X)} W_2^2(\nu,f_j(\mu))\,\d\Omega(\mu)\\
			&\leq 2\left(\int_{\pr(X)} W_2^2(\nu,\mu)\,\d\Omega(\mu)+\int_{\pr(X)} W_2^2(\mu,f_j(\mu))\,\d\Omega(\mu)\right)\\
			&\overset{\eqref{5.24}}\leq  2\left(\int_{\pr(X)} W_2^2(\nu,\mu)\,\d\Omega(\mu)+ \epsilon^2+\epsilon\right).
		\end{aligned}
	\end{equation}

Letting $j\to \infty$ and $\epsilon\rightarrow0^{+}$ in \eqref{5.15}, we prove this claim.

	\paragraph{Step 3:} By  assumption, there is $c\geq 0$ such that
$$
\int_X e^{-c\ds(x_0,x)^2}\,\d\mm(x)<\infty,\qquad \text{for all } x_0 \in X,
$$
	define $z=\int_{X} e^{-c\ds^2(x_0,x)}\,\d\mm(x)$ and $\tilde{\mm}=\frac{1}{z}e^{-c\ds^2(x_0,x)}\mm\in \pr(X)$. By changing the  reference measure, we get
	\begin{equation}
		{\rm Ent}_\mm(\mu)={\rm Ent}_{\tilde{\mm}}(\mu)-c\int_{X} \ds^2(x_0,x)\,\d\mu-\ln z.
	\end{equation}
	Note that by Jensen's inequality  \eqref{eq:defBCD infty}, for every $j\in\mathbb{N}$, 
	$${\rm Ent}_\mm(\mu_j)\leq \int _{\pr(X)} {\rm Ent}_\mm(\mu)\, \d\Omega_j(\mu)-\frac{K}{2}\int_{\pr(X)}W_2^2(\mu_j,\mu)\,\d\Omega_j(\mu).$$
	Therefore,  for $K\geq 0$ we have 
	\begin{eqnarray*}
			{\rm Ent}_\mm(\mu_j)\leq \int _{\pr(X)} {\rm Ent}_\mm(\mu) \,\d\Omega_j(\mu),
			\end{eqnarray*}
			and for $K<0$ we have
			\begin{eqnarray*}
			{\rm Ent}_\mm(\mu_j)\leq \int _{\pr(X)} {\rm Ent}_\mm(\mu) \,\d\Omega_j(\mu)-K\int_{\pr(X)} W_2^2(\mu_0,\mu)\,\d \Omega(\mu).
			\end{eqnarray*}
	Combining with Step 1 and Step 2,  we known there is $C=C(c,K)>0$, so that
\begin{equation}
		{\rm Ent}_{\tilde{\mm}}(\mu_j)\leq   C<\infty,
	\end{equation}
for every $j\in\mathbb{N}$. This surely implies  the sequence $(\mu_j)_{j\geq 1}$
	is tight. By Prokhorov's theorem, there exists a narrow convergent subsequence of $(\mu_j)_{j\geq 1}$. Then by \cite[Theorem 3]{le2017existence}, the limit of the narrow convergent subsequence is a Wasserstein barycenter of $\Omega$. We complete the proof.
\end{proof}

\subsection{Applications}\label{sect:app}
At the end of this paper, we prove  some new geometric inequalities,  as simple but interesting applications of our BCD theory. For simplicity, we  will only deal with BCD metric measure spaces. More functional and geometric inequalities on BCD extended metric measure spaces, will be studied in a forthcoming paper. We also refer the readers to a paper \cite{KW-BS} by Kolesnikov and Werner  for more applications.

\begin{proposition}[Multi-marginal  Brunn--Minkowski inequality]
Let $\ms$ be a ${\rm BCD}(0,N)$ metric measure space, $E_1,..., E_n$ be bounded measurable sets with positive measure and $\lambda_1,\dots, \lambda_n\in (0, 1)$ with $\sum_i \lambda_i=1$. Then
\[
\mm(E)\geq \left(\sum_{i=1}^n \lambda_i \big(\mm(E_i)\big)^{\frac 1N}\right)^N
\]
where 
$$
E:=\left\{x ~\text{is the barycenter of}~\sum_{i=1}^n \lambda_i\delta_{x_i}: x_i\in E_i, i=1,\dots, n \right\}.
$$ 
\end{proposition}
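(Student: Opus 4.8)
The plan is to apply the ${\rm BCD}(0,N)$ inequality to the normalized uniform measures on the sets $E_i$. I would set $\mu_i := \mm(E_i)^{-1}\,\mm\llcorner_{E_i}$; since each $E_i$ is bounded with $0<\mm(E_i)<\infty$, we have $\mu_i\in\pr_2(X,\ds)$ and a direct computation of the relative entropy gives ${\rm Ent}_{\mm}(\mu_i)=-\ln\mm(E_i)$, so that $U_N(\mu_i)=\mm(E_i)^{1/N}$. I then consider $\Omega:=\sum_{i=1}^n\lambda_i\delta_{\mu_i}\in\pr_2(\pr(X),W_2)$, which has finite support and finite variance. The key simplification is that for $K=0$ the distortion coefficients trivialize, $s_{0}(\theta)=\theta$ and $t_{0}(\theta)=\theta$, so both quotients appearing in Definition \ref{def:bcdkn} equal $1$, and the ${\rm BCD}(0,N)$ inequality collapses to
\[
\sum_{i=1}^n\lambda_i\,\mm(E_i)^{1/N}=\int U_N(\mu)\,\d\Omega(\mu)\leq U_N(\bar\mu),
\]
where $\bar\mu$ is the barycenter of $\Omega$ produced by the condition.

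Next I would locate where $\bar\mu$ lives. Applying the super-position theorem (Theorem \ref{th:sp}) to $\Omega$ and its barycenter $\bar\mu$, I obtain a coupling $\eta\in\Pi(\mu_1,\dots,\mu_n,\bar\mu)$ such that for $\eta$-a.e. $(x_1,\dots,x_n,y)$ the point $y$ is a barycenter of $\sum_{i=1}^n\lambda_i\delta_{x_i}$. Since $\mu_i$ is concentrated on $E_i$, we have $x_i\in E_i$ for $\eta$-a.e. tuple, whence $y\in E$ by the very definition of $E$; as $\bar\mu$ is the last marginal of $\eta$, this shows that $\bar\mu$ is concentrated on $E$.

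Finally I would convert the strict positivity of $\sum_i\lambda_i\mm(E_i)^{1/N}$ into an entropy bound. This positivity forces $U_N(\bar\mu)>0$, i.e. ${\rm Ent}_{\mm}(\bar\mu)<\infty$ and $\bar\mu=\rho\,\mm\ll\mm$. We may assume $\mm(E)<\infty$, the inequality being trivial otherwise. Since $\bar\mu$ is concentrated on $E$ with $\int_E\rho\,\d\mm=1$, Jensen's inequality for the convex function $t\mapsto t\ln t$ against the probability measure $\mm(E)^{-1}\mm\llcorner_E$ gives
\[
{\rm Ent}_{\mm}(\bar\mu)=\int_E\rho\ln\rho\,\d\mm\geq-\ln\mm(E),
\]
hence $U_N(\bar\mu)=e^{-{\rm Ent}_{\mm}(\bar\mu)/N}\leq\mm(E)^{1/N}$. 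Chaining this with the ${\rm BCD}(0,N)$ inequality yields $\sum_i\lambda_i\mm(E_i)^{1/N}\leq\mm(E)^{1/N}$, and raising to the $N$-th power gives the claim.

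The step I expect to be the main obstacle is the measurability of $E$, and hence the precise meaning of $\mm(E)$: the concentration statement coming from Theorem \ref{th:sp} a priori only places $\bar\mu$ on an analytic set contained in $E$, so one must either verify that $E$ is universally measurable or state the conclusion with the outer measure $\mm^*(E)$ (which coincides with $\mm(E)$ when $E$ is measurable). The remaining ingredients—the entropy identity for uniform measures, the $K=0$ specialization of the distortion coefficients, and the Jensen lower bound—are routine.
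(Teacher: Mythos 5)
Your proposal is correct and follows essentially the same route as the paper's own proof: normalized uniform measures $\mu_i=\mm(E_i)^{-1}\mm\llcorner_{E_i}$, the collapse of the ${\rm BCD}(0,N)$ inequality to $\sum_i\lambda_i\mm(E_i)^{1/N}\leq U_N(\bar\mu)$, concentration of $\bar\mu$ on $E$ via the super-position theorem (Theorem \ref{th:sp}), and the Jensen lower bound ${\rm Ent}_{\mm}(\bar\mu)\geq-\ln\mm(E)$. Your closing remark on the measurability of $E$ (and the possible need for outer measure) is a legitimate refinement that the paper's proof silently glosses over, but it does not change the argument.
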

\begin{proof}
Set $\mu_i:=\frac 1{\mm(E_i)}\mm\llcorner_{E_i}, i=1,\dots, n$ and $\Omega:=\sum_{i=1}^n \lambda_i\delta_{\mu_i}\in \pr_0(\pr_2(X, \ds))$.
By Definition \ref{def:bcdkn}, \eqref{eq:defbCDKN} we get
\begin{equation}
\int U_N(\mu)\,\d\Omega(\mu)\leq  U_N(\bar{\mu}),
\end{equation}
where $\bar{\mu}$ is the	barycenter  of  $\Omega$.
This implies
\[
\sum_{i=1}^n \lambda_i(\mm(E_i))^{\frac 1N} \leq U_N(\bar{\mu}).
\]

 Note that by Theorem \ref{th:sp}, $\bar{\mu}$ is concentrated on $E$. So by Jensen's inequality, we have
\[
{\rm Ent}_{\mm}(\bar{\mu}) \geq -\ln(\mm(E))
\]
and
\[
U_N(\bar{\mu}) \leq  (\mm(E))^{\frac 1N}.
\]
In conclusion, we  obtain
\[
\sum_{i=1}^n \lambda_i(\mm(E_i))^{\frac 1N} \leq (\mm(E))^{\frac 1N}.
\]
which is the thesis.
\end{proof}

\medskip

\begin{proposition}[Multi-marginal logarithmic Brunn--Minkowski inequality]
Let $\ms$ be a ${\rm BCD}(0,\infty)$ metric measure space,  $E_1,..., E_n$ be  bounded measurable sets with positive measure  and $\lambda_1,\dots, \lambda_n\in (0, 1)$ with $\sum_i \lambda_i=1$. Then
\[
\mm(E) \geq \mm(E_1)^{\lambda_1}... \mm(E_n)^{\lambda_n}
\]
where 
$$
E:=\left\{x ~\text{is the barycenter of}~\sum_{i=1}^n \lambda_i\delta_{x_i}: x_i\in E_i, i=1,\dots, n \right\}.
$$
\end{proposition}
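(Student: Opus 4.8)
The plan is to run exactly the scheme of the ${\rm BCD}(0,N)$ case, replacing the dimensional Jensen inequality \eqref{eq:defbCDKN} by the entropic inequality \eqref{eq:defBCD infty} of the ${\rm BCD}(0,\infty)$ condition (Definition \ref{def:bcd}). First I set $\mu_i:=\frac{1}{\mm(E_i)}\mm\llcorner_{E_i}\in\pr(X)$ for $i=1,\dots,n$. Since each $E_i$ is bounded with positive mass and $\mm$ is a probability measure (so $0<\mm(E_i)\leq 1$), we have $\mu_i\in\pr_2(X,\ds)$, and a direct computation with the constant density $\rho_i=\mm(E_i)^{-1}\mathbf 1_{E_i}$ gives ${\rm Ent}_\mm(\mu_i)=\int\rho_i\ln\rho_i\dm=-\ln\mm(E_i)$. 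Put $\Omega:=\sum_{i=1}^n\lambda_i\delta_{\mu_i}\in\pr_0(\pr_2(X,\ds))$, which has finite variance. By ${\rm BCD}(0,\infty)$ there is a barycenter $\bar\mu$ of $\Omega$ satisfying \eqref{eq:defBCD infty}; specializing to $K=0$ and using the computation above yields
\[
{\rm Ent}_\mm(\bar\mu)\leq\int_{\pr(X)}{\rm Ent}_\mm(\mu)\,\d\Omega(\mu)=\sum_{i=1}^n\lambda_i{\rm Ent}_\mm(\mu_i)=-\sum_{i=1}^n\lambda_i\ln\mm(E_i).
\]
In particular ${\rm Ent}_\mm(\bar\mu)<\infty$, so $\bar\mu\ll\mm$.

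Next I localize the barycenter and bound its entropy from below. By the super-position Theorem \ref{th:sp} there is $\eta\in\Pi(\mu_1,\dots,\mu_n,\bar\mu)$ such that for $\eta$-a.e.\ $(x_1,\dots,x_n,y)$ the point $y$ is a barycenter of $\sum_i\lambda_i\delta_{x_i}$. Because each coordinate marginal $\mu_i$ satisfies $\mu_i(E_i)=1$, the plan $\eta$ is concentrated on $\{x_i\in E_i,\ \forall i\}$, so $\eta$-a.e.\ $y$ lies in $E$; hence $\bar\mu$, the last marginal of $\eta$, is concentrated on $E$. Writing $\bar\mu=\rho\,\mm$ and comparing with the normalized restriction $\mu_E:=\mm(E)^{-1}\mm\llcorner_E$, the nonnegativity of the relative entropy ${\rm Ent}_{\mu_E}(\bar\mu)\geq 0$ (Jensen's inequality for $t\mapsto t\ln t$) rearranges to ${\rm Ent}_\mm(\bar\mu)\geq-\ln\mm(E)$. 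Combining this with the upper bound above gives $-\ln\mm(E)\leq-\sum_i\lambda_i\ln\mm(E_i)$, and exponentiating produces $\mm(E)\geq\prod_i\mm(E_i)^{\lambda_i}$, which is the claim.

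I expect the main obstacle to be the localization step rather than any of the analytic estimates: one must use Theorem \ref{th:sp} to pass from the \emph{analytic} inequality \eqref{eq:defBCD infty} to the \emph{geometric} statement that $\bar\mu$ is carried by the set $E$ of barycenters, and then treat $E$ as $\mm$-measurable with the asserted mass. The point that $\eta$ concentrates on $\prod_i E_i$ (so that the relevant barycenters land in $E$) and that $\bar\mu\ll\mm$ forces $\mm(E)>0$ are what make the comparison with $\mu_E$ legitimate; these are exactly the subtleties already handled, in the same way, in the ${\rm BCD}(0,N)$ proposition. The remaining ingredients—the entropy of uniform measures, the $K=0$ specialization of \eqref{eq:defBCD infty}, and the Gibbs-type lower bound—are routine.
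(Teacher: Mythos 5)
Your proposal is correct and follows essentially the same route as the paper's own proof: the same choice of $\mu_i=\mm(E_i)^{-1}\mm\llcorner_{E_i}$ and $\Omega=\sum_i\lambda_i\delta_{\mu_i}$, the same application of \eqref{eq:defBCD infty} with $K=0$, the same use of the super-position Theorem \ref{th:sp} to show $\bar\mu$ is concentrated on $E$, and the same relative-entropy (Jensen) lower bound ${\rm Ent}_\mm(\bar\mu)\geq-\ln\mm(E)$. Your write-up is in fact more careful than the paper's at the localization step (spelling out that $\eta$ concentrates on $\prod_i E_i$ and hence $\bar\mu$ on $E$), which is exactly where the paper is terse.
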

\begin{proof}
Set $\mu_i:=\frac 1{\mm(E_i)}\mm\llcorner_{E_i}, i=1,\dots, n$ and $\Omega:=\sum_{i=1}^n \lambda_i\delta_{\mu_i}\in \pr_0(\pr_2(X, \ds))$.
By Definition \ref{def:bcd}, \eqref{eq:defBCD infty} we get
\[
	{\rm Ent}_{\mm}(\bar{\mu}) \leq -\sum_{i=1}^n \lambda_i \ln(\mm(E_i)).
	\]
where $\bar{\mu}$ is the	barycenter  of  $\Omega$. By Theorem \ref{th:sp} we can see that $\bar{\mu}$ is concentrated on $E$.   By Jensen's inequality, we have
\[
{\rm Ent}_{\mm}(\bar{\mu}) \geq -\ln(\mm(E)).
\]
In conclusion, we  obtain
\[
-\ln(\mm(E)) \leq -\sum_{i=1}^n \lambda_i \ln(\mm(E_i))
\]
which is the thesis.
\end{proof}

\medskip

\begin{proposition}[A     functional Blaschke--Santal\'o type  inequality]
Let $\ms$ be  a ${\rm BCD}(1,\infty)$ metric measure space. Then we  have
\begin{equation*}\label{eq1:th3}
\prod_{i=1}^k \int_X e^{f_i}\,\d \mm\leq 1 
\end{equation*}
for any  measurable functions   $f_i$ on $X$ such that $\frac {e^{f_i}}{\int e^{f_i}\,\d \mm}\in \pr_2(X, \ds)$ and
\begin{equation*}\label{eq:duality}
\sum_{i=1}^k f_i(x_i)\leq  \frac 1{2} \inf_{x\in X} \sum_{i=1}^k  \ds^2(x, x_i)\qquad \forall x_i\in X, i=1,2,...,k.
\end{equation*}
\end{proposition}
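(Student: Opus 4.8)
The plan is to mirror the two Brunn--Minkowski arguments above: choose the correct finitely supported measure on $\pr(X)$, invoke the $\mathrm{BCD}(1,\infty)$ inequality, and then use the duality hypothesis together with the superposition theorem to cancel the variance term, so that the non-negativity of relative entropy becomes the punchline.

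First I would normalize. Set $c_i:=\int_X e^{f_i}\,\d\mm\in(0,\infty)$ and $\mu_i:=c_i^{-1}e^{f_i}\mm\in\pr_2(X,\ds)$, and take $\Omega:=\tfrac{1}{k}\sum_{i=1}^k\delta_{\mu_i}\in\pr_0(\pr(X))$. Since each $\mu_i$ has finite variance, $\Omega$ does too, and since $\mm$ is a probability measure one computes ${\rm Ent}_\mm(\mu_i)=\int_X f_i\,\d\mu_i-\ln c_i$. Applying Definition \ref{def:bcd} with $K=1$ yields a barycenter $\bar{\mu}$ of $\Omega$ with
\[
{\rm Ent}_{\mm}(\bar{\mu})\leq \frac{1}{k}\sum_{i=1}^k{\rm Ent}_{\mm}(\mu_i)-\frac{1}{2}{\rm Var}(\Omega).
\]

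The key step is to bound $\tfrac{1}{k}\sum_i\int_X f_i\,\d\mu_i$ by $\tfrac{1}{2}{\rm Var}(\Omega)$ using the constraint. By Theorem \ref{th:sp} there is $\eta\in\Pi(\mu_1,\dots,\mu_k,\bar{\mu})$ such that for $\eta$-a.e. $(x_1,\dots,x_k,y)$ the point $y$ is a barycenter of $\tfrac{1}{k}\sum_i\delta_{x_i}$; moreover the proof of that theorem identifies $\int_{X^{k+1}}\sum_i\tfrac{1}{k}\ds^2(x_i,y)\,\d\eta={\rm Var}(\Omega)$. Since $y$ then realizes $\inf_{x\in X}\sum_i\ds^2(x,x_i)=\sum_i\ds^2(x_i,y)$ $\eta$-a.e., the hypothesis gives $\sum_i f_i(x_i)\leq\tfrac{1}{2}\sum_i\ds^2(x_i,y)$ $\eta$-a.e. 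Integrating against $\eta$ (the negative parts being integrable because $t\mapsto te^{t}\geq-e^{-1}$ and $\mm(X)=1$, so each $\int_X f_i\,\d\mu_i$ is well-defined in $(-\infty,+\infty]$ and, being summed to a finite bound while bounded below, is in fact finite) yields $\sum_i\int_X f_i\,\d\mu_i\leq\tfrac{k}{2}{\rm Var}(\Omega)$; in particular each ${\rm Ent}_\mm(\mu_i)$ is finite.

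Combining the two displays, the $\tfrac{1}{2}{\rm Var}(\Omega)$ contributions cancel:
\[
{\rm Ent}_{\mm}(\bar{\mu})\leq \frac{1}{k}\sum_{i=1}^k\Big(\int_X f_i\,\d\mu_i-\ln c_i\Big)-\frac{1}{2}{\rm Var}(\Omega)\leq -\frac{1}{k}\sum_{i=1}^k\ln c_i .
\]
Finally, because $\mm$ is a probability measure the relative entropy is non-negative, ${\rm Ent}_{\mm}(\bar{\mu})\geq 0$, whence $\sum_i\ln c_i\leq 0$, i.e. $\prod_{i=1}^k\int_X e^{f_i}\,\d\mm=\prod_{i=1}^k c_i\leq 1$. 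I expect the main obstacle to be the middle paragraph: correctly extracting from Theorem \ref{th:sp} both the coupling $\eta$ carrying barycenters and the identity ${\rm Var}(\Omega)=\int\tfrac{1}{k}\sum_i\ds^2(x_i,y)\,\d\eta$, and justifying the integrability of the $f_i$ so that the duality inequality may be integrated; once this cancellation is in place, the conclusion is just the non-negativity of ${\rm Ent}_\mm$ against the probability measure $\mm$.
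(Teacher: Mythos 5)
Your proposal is correct and follows essentially the same route as the paper's proof: normalize to get $\mu_i$, apply the ${\rm BCD}(1,\infty)$ Jensen inequality to $\Omega=\frac1k\sum_i\delta_{\mu_i}$, integrate the duality constraint against an optimal multi-marginal coupling whose cost equals $\frac12\sum_i W_2^2(\bar\mu,\mu_i)$ (the paper states this via the identity $\inf_\pi\int\inf_x\sum_i\ds^2\,\d\pi=\sum_i W_2^2(\bar\mu,\mu_i)$, you instantiate it via the superposition Theorem~\ref{th:sp}), and conclude from ${\rm Ent}_\mm(\bar\mu)\geq 0$. Your explicit treatment of the integrability of the $f_i$ is a detail the paper glosses over, but it does not change the argument.
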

\begin{proof}
 Let  $\mu_i:=\frac {e^{f_i}}{\int e^{f_i}\,\d \mm},i=1,2,...,k$ be probability measures on $X$.  Let $\mu$ be the Wasserstein barycenter of the probability measure $\sum_{i=1}^n  
 \frac 1k \delta_{\mu_i}$.  By Definition \ref{def:bcd},  it holds the following  Wasserstein Jensen's inequality
 \[
 \ent \mm(\mu)\leq  \sum_{i=1}^k \frac 1k \ent \mm(\mu_i) -\frac 1{2k}  \sum_{i=1}^k  W_2^2(\mu, \mu_i).
 \]
Therefore
\begin{eqnarray*}
&&\sum_{i=1}^k \int f_i(x_i)\,\d \mu_i(x_i)\leq \frac 12 \inf_{\pi\in \Pi(\mu_1,\dots,\mu_k)} \int\inf_{x\in X} \sum_{i=1}^k  \ds^2(x, x_i)\,\d \pi\\
&=&\frac 12  \sum_{i=1}^k  W_2^2(\mu, \mu_i) \leq  \sum_{i=1}^k  \ent \mm(\mu_i)- k\ent \mm(\mu)\\&\leq &  \sum_{i=1}^k \Big(\int \frac {e^{f_i}}{\int e^{f_i}} \ln \frac {e^{f_i}}{\int e^{f_i}}\,\d \mm\Big)\\
&=&  \sum_{i=1}^k \Big( \int f_i(x_i)\,\d \mu_i(x_i)-\ln \int e^{f_i}\,\d \mm\Big).
\end{eqnarray*}
So  $ \sum_{i=1}^k \ln \int e^{f_i}\,\d \mm\leq 0$ which is the thesis.
\end{proof}

\addcontentsline{toc}{section}{References}
\def\cprime{$'$}
\providecommand{\bysame}{\leavevmode\hbox to3em{\hrulefill}\thinspace}
\providecommand{\MR}{\relax\ifhmode\unskip\space\fi MR }
\providecommand{\MRhref}[2]{%
  \href{http://www.ams.org/mathscinet-getitem?mr=#1}{#2}
}
\providecommand{\href}[2]{#2}

\end{document}